\title{Tauberian Theorem: Square-root singularity and Ramified covering of degree two}
\author{Chevalier Guillaume.}
\begin{document}
\newtheorem{Thm}{Theorem}[section]
\newtheorem{Prop}[Thm]{Proposition} 
\newtheorem{Lem}[Thm]{Lemma} 
\newtheorem{Cor}[Thm]{Corollary} 
\newtheorem{Clm}[Thm]{Claim} 
\newtheorem{prop}[Thm]{Property} 
\newtheorem*{Thm*}{Theorem} 
\newtheorem*{Cor*}{Corollary} 
\newtheorem*{Prop*}{Proposition}
\theoremstyle{definition} 
\newtheorem{Def}[Thm]{Definition} 
\theoremstyle{remark}
\newtheorem{Rem}[Thm]{Remark} 
\newtheorem{Notation}[Thm]{Notation}
\newtheorem{Ex}[Thm]{Example}  
\newtheorem*{Rem*}{Remark}
\newtheorem*{Ex*}{Example}
\numberwithin{equation}{section}

\newcounter{nmbrexercise}
\newenvironment{Exercise}[1][ ] %environment name
{% begin code
  \par\vspace{\baselineskip}%
 {\refstepcounter{nmbrexercise} \color{PineGreen} \noindent \textbf{Exercise~\thenmbrexercise :} \textit{#1}}%
  \par\vspace{\baselineskip}%
}%
{\vspace{\baselineskip}}% end code 
\maketitle

\begin{abstract}
We prove a Tauberian theorem concerning power series admitting square root singularities. More precisely we give an asymptotic expansion to any order of the coefficients of a power series admitting square-root type singularities. This article is one of a triptych with \cite{Asymp} and \cite{LCurve} that aims at proving an asympototic expansion to any order of the passage probability of an irreducible finite range random walk on free groups.
\end{abstract}

\begin{center}
	\textbf{\Large Introduction}
\end{center}

        Historically, the term \textit{Tauberian Theorem} originates from Alfred Tauber (1866–1942), who established a well-known result concerning the Abel summability criterion \cite{Tauber_1897}. Subsequent developments extended this idea to the extraction of coefficient asymptotics from the behaviour of the associated analytic function, near its singularities. Notable examples include Karamata’s Tauberian theorem \cite{Karamata_1933}, elegantly presented in William Feller’s book \cite{Feller_Vol_II_1971} (XIII.5, Theorem 1), which provides an asymptotic expansion of the tail of a real measure based on the behavior of its Laplace transform near zero, with application for generating functions of sequence of non-negative numbers. Another important result is the Ikehara-Wiener theorem \cite{Ikehara_1931}, which establishes asymptotics for partial sums of a sequence $(a_n)_n$ in terms of the behavior of its associated Dirichlet series.

    	In the book \underline{Analytic Combinatorics} by Philippe Flajolet and Robert Sedgewick \cite{Flajolet-Sedgewick_2009}, one can find a collection of methods and Tauberian theorems for computing asymptotic expansions of the coefficients of generating functions that admit a power series expansion in a neighbourhood of $0\in\mathbb{C}$ depending on the behaviour of the generating function near its nearest singularities. among these results, one can find a theorem (Theorem VII.6 in subsection VII.6.3 of \cite{Flajolet-Sedgewick_2009}) whose proof uses contour integral as we does here to prove Theorem \ref{Thm_Tauberien_tout_ordre} and Proposition \ref{Prop_Composition_racine}, below. 

	    We present our Tauberian Theorem using a different point of view than the one in Flajolet and Sedgewick book, to address rational-type singularities, with a special focus on square-root type singularities. The main results of this article are Theorem \ref{Thm_Black_Box_aperiodic} and its Corollary \ref{Cor_Black_box_general}. That we will use in subsequent article (\cite{Asymp} and \cite{LCurve}) to establish a precise asymptotic expansion for irreducible finite-range random walk on free group. Additionally, we study the singularities of functions that factors with some meromorphic functions on a Riemann surface $\mathcal{C}$. 

%We also give direct application of our Tauberian theorem, that are not essential in the context of the triptych.
		
%	\section{The Lalley's curve from an algebraic geometer point of vue}
%		%Now that we have seen the differential viewpoint, we present $\mathcal{C}$ from the algebraic point of vue. This part was set by S. P. Lalley
%		The objective of this part is to translate all the previous work in the algebraic point of vue.
%	\section{Non-algebraicity of Green function on hyperbolic group?}
%	
%	\section{Probabilities for first neighbour random walk on trees}\subsection{Glossary of Symbols}
%		\begin{enumerate}
%			\item $X=(X_0,X_1)$  : Tree with set of vertices $X_0$ and edges $X_1$. (\ref{???})
%			\item $\mathcal{G}_\Xi,\mathcal{G}_?$ graph (\ref{???})
%		\end{enumerate}
%		

\section{Tauberian Theorem}
\subsection{Stirling formula The Historical Formula (from \texorpdfstring{\cite{Dominici_2008})}{-CitationDominici}}
		\begin{Def}\textit{Power functions on $\mathbb{C}$}\label{def_power_functions}

	Let us note $L:\mathbb{C}\setminus ]-\infty,0]\to\mathbb{C}$, the main determination of the logarithm on $\mathbb{C}\setminus ]-\infty,0]$ (i.e. the one that verifies $L(1)=0$).
	For $\beta\in\mathbb{C}$, note $\pi_\beta:z\mapsto\exp (\beta L(z))$ defined on $\mathbb{C}\setminus ]-\infty,0]$. We will also write $\pi_\beta(z)=z^\beta$.

	If $Re(\beta)>0$, $\pi_\beta$ is continuously extended at $z=0$ by $\pi_\beta(0)=0$.
\end{Def}

			During his exchanges with the French mathematician Abraham de Moivre (1667-1754), the Scottish mathematician James Stirling (1692-1770) demonstrated the following result about an equivalent of $n!$, when $n$ goes to $\infty$.
			\begin{Prop}\textit{Stirling's formula (1730)}(Ex 2 to Prop 28 \cite{Tweddle_2003})
			
				The factorial sequence $(n!)_{n\in\mathbb{N}}$ satisfies the following equivalence as $n$ goes to $\infty$:
				\begin{equation}\label{Stirling's_formula}
				n!\sim\sqrt{2 \pi n}\left(\frac{n}{e}\right)^n
				\end{equation}
			\end{Prop}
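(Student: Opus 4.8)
The plan is to establish the asymptotic $n! \sim \sqrt{2\pi n}\,(n/e)^n$ via the classical route through the Gamma function and Laplace's method, since this is the approach best suited to a paper whose later arguments rely on contour integrals and singularity analysis. First I would write $n! = \Gamma(n+1) = \int_0^\infty t^n e^{-t}\,dt$ and perform the substitution $t = n(1+s)$, which recenters the integral at the location of the integrand's maximum. This yields
\begin{equation}
n! = n^{n+1} e^{-n} \int_{-1}^{\infty} e^{-n\varphi(s)}\,ds, \qquad \varphi(s) = s - \log(1+s),
\end{equation}
where $\varphi(0)=0$, $\varphi'(0)=0$, and $\varphi''(0)=1$, so the exponent has a nondegenerate quadratic minimum at $s=0$.

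Next I would apply Laplace's method to the integral $I_n = \int_{-1}^\infty e^{-n\varphi(s)}\,ds$. The contribution is concentrated near $s=0$; using the local expansion $\varphi(s) = s^2/2 + O(s^3)$ and rescaling $s = u/\sqrt{n}$, one finds $I_n \sim \sqrt{2\pi/n}$ as $n\to\infty$. The tails $|s|\geq\delta$ contribute only exponentially small terms because $\varphi$ is strictly positive and bounded below away from the origin (with $\varphi(s)\to\infty$ as $s\to\infty$ and as $s\to -1^+$). Combining this with the prefactor gives $n! \sim n^{n+1} e^{-n}\sqrt{2\pi/n} = \sqrt{2\pi n}\,(n/e)^n$, as claimed.

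An alternative, more elementary route I would keep in reserve avoids the integral entirely: set $a_n = n!/(\sqrt{n}\,(n/e)^n)$ and show $\log a_{n+1} - \log a_n = O(1/n^2)$ by Taylor-expanding $(n+1)\log(1+1/n)$, so that $(\log a_n)$ converges; the value of the limit, $\log\sqrt{2\pi}$, is then pinned down by the Wallis product for $\pi/2$. The main obstacle in either approach is purely bookkeeping rather than conceptual: in the Laplace-method version one must justify uniformly that the tail integrals are negligible and that the rescaled integrand converges dominated by an integrable function; in the elementary version one must handle the Wallis-product identification carefully. Since the statement as quoted asks only for the first-order equivalence (not a full asymptotic expansion), I would present the Laplace-method computation up to the leading term and cite a standard reference — here the paper cites Tweddle and Dominici — for the remaining routine estimates.
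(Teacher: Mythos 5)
Your proposal is mathematically sound, but note that the paper does not prove this proposition at all: it is stated as a classical result with a citation to Tweddle's edition of Stirling's correspondence, and the surrounding propositions (the generalized Stirling formula, the Laplace formula, the Tricomi--Erdelyi formula) are likewise quoted from the literature without proof. So you are supplying an argument where the paper supplies only a reference. Of your two routes, the Laplace-method computation $n! = n^{n+1}e^{-n}\int_{-1}^{\infty}e^{-n\varphi(s)}\,ds$ with $\varphi(s)=s-\log(1+s)$ is the natural choice in context, since it is exactly the mechanism behind the paper's subsequent ``Laplace formula'' giving the full expansion $\Gamma(x+1)=\sqrt{2\pi x}(x/e)^x(1+c_1/x+\cdots)$; pushing the expansion of $\varphi$ to higher order recovers those correction terms, whereas the elementary telescoping-plus-Wallis route yields only the leading equivalence and does not generalize. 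Your identification of the two points needing care (uniform negligibility of the tails, and dominated convergence after the rescaling $s=u/\sqrt{n}$) is accurate, and both are routine here because $\varphi$ is strictly convex with $\varphi(s)\to\infty$ at both ends of the integration range. There is no gap; the only mismatch with the paper is that the paper never intended to prove the statement.
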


This formula (\ref{Stirling's_formula}) can be generalized using Euler's Gamma function:
			\begin{Prop}\textit{Euler Gamma function }(See\cite{Flajolet-Sedgewick_2009} Appendix B.3.)
			
			    The Euler Gamma function is the function $\Gamma:\{Re(z)>0\}\to\mathbb{C}$, that associates to any complex number $z\in\mathbb{C}$, with positive real part $Re(z)>0$, the value $$\Gamma(z)=\int_0^\infty t^{z-1}e^{-t}dt.$$
			   
			   It is holomorphic on $\{z: Re(z)>0\}\subset\mathbb{C}$ and extends by means of the formula \og $\Gamma(z+1)=z\Gamma(z)$\fg{} into a meromorphic function on $\mathbb{C}$ admitting simple poles at all non-positive integers $z=0,-1,-2,...$, with residue $(-1)^n/n!$ at $z=-n$, for $n\in\mathbb{N}$.
			\end{Prop}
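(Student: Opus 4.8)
The plan is to treat the three claims in order: holomorphy of the integral on the open right half-plane, the functional equation $\Gamma(z+1)=z\Gamma(z)$, and the meromorphic continuation together with the location of the poles and the computation of the residues.

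First I would fix reals $0<a<b$ and work on the closed vertical strip $S_{a,b}=\{z:a\le \mathrm{Re}(z)\le b\}$. Splitting $\int_0^\infty$ at $t=1$ and using $|t^{z-1}e^{-t}|=t^{\mathrm{Re}(z)-1}e^{-t}\le t^{a-1}$ on $(0,1]$ and $\le t^{b-1}e^{-t}$ on $[1,\infty)$, both dominating functions being integrable, the integral converges and (by dominated convergence) defines a continuous function $\Gamma$ on $S_{a,b}$. Since $z\mapsto t^{z-1}e^{-t}$ is entire for each fixed $t>0$, Fubini's theorem gives $\oint_{\partial T}\Gamma(z)\,dz=\int_0^\infty\bigl(\oint_{\partial T}t^{z-1}\,dz\bigr)e^{-t}\,dt=0$ for every closed triangle $T$ in the interior of $S_{a,b}$, so Morera's theorem yields holomorphy there; letting $a\downarrow 0$ and $b\uparrow\infty$ covers all of $\Omega:=\{\mathrm{Re}(z)>0\}$. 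The functional equation then follows for $\mathrm{Re}(z)>0$ by integrating $\Gamma(z+1)=\int_0^\infty t^z e^{-t}\,dt$ by parts: the boundary term $[-t^z e^{-t}]_0^\infty$ vanishes (because $\mathrm{Re}(z)>0$ kills it at $0$ and the exponential kills it at $\infty$), leaving $z\int_0^\infty t^{z-1}e^{-t}\,dt$. In particular $\Gamma(1)=\int_0^\infty e^{-t}\,dt=1$, and $\Gamma(n+1)=n!$ for all $n\in\mathbb{N}$ by induction.

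For the continuation I would, for each integer $n\ge 1$, set
\[
\Gamma_n(z):=\frac{\Gamma(z+n)}{z(z+1)\cdots(z+n-1)}\qquad\text{on }\{\mathrm{Re}(z)>-n\},
\]
which makes sense because $\mathrm{Re}(z+n)>0$ there; this is meromorphic on $\{\mathrm{Re}(z)>-n\}$ with poles contained in $\{0,-1,\dots,-(n-1)\}$. The functional equation gives $\Gamma_{n+1}=\Gamma_n$ on $\{\mathrm{Re}(z)>-n\}$ and $\Gamma_1=\Gamma$ on $\Omega$, so the family $(\Gamma_n)_n$ glues to a single meromorphic function on $\mathbb{C}$, still written $\Gamma$, extending the original one; it is the unique such extension by the identity theorem, and its only possible poles are the non-positive integers. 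To analyse the pole at $z=-n$ ($n\in\mathbb{N}$) I would use the representation $\Gamma(z)=\Gamma(z+n+1)/\bigl(z(z+1)\cdots(z+n)\bigr)$, valid near $z=-n$: the numerator is holomorphic with $\Gamma(z+n+1)\to\Gamma(1)=1\neq 0$, while the denominator has a simple zero at $z=-n$; hence the pole is simple, with residue
\[
\lim_{z\to -n}(z+n)\Gamma(z)=\frac{\Gamma(1)}{(-n)(-n+1)\cdots(-1)}=\frac{1}{(-1)^n n!}=\frac{(-1)^n}{n!}.
\]

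The only genuinely delicate point is the justification of holomorphy of the defining integral, i.e. the locally uniform integrability near the two endpoints $t=0$ and $t=\infty$ that legitimates the Morera/Fubini argument; once that is in place the functional equation is an elementary integration by parts and the whole meromorphic continuation, pole structure and residue formula are purely formal consequences of it.
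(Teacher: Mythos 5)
Your proof is correct. Note that the paper does not prove this proposition at all: it is stated as a classical fact with a citation to Flajolet--Sedgewick, Appendix B.3. Your argument (dominated convergence plus Morera/Fubini for holomorphy on $\{\mathrm{Re}(z)>0\}$, integration by parts for the functional equation, and the gluing $\Gamma_n(z)=\Gamma(z+n)/\bigl(z(z+1)\cdots(z+n-1)\bigr)$ for the continuation with the residue computation $\lim_{z\to-n}(z+n)\Gamma(z)=(-1)^n/n!$) is the standard textbook proof that such a citation points to, and all steps, including the verification that each pole is genuinely simple because $\Gamma(1)=1\neq 0$, are sound.
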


			\begin{Prop}\textit{Generalized Stirling's formula}

				For $x\in\mathbb{R}$, when $x$ goes to $\infty$,
				$$\Gamma(1+x)\sim \sqrt{2\pi x}\left(\frac{x}{e}\right)^x$$
			\end{Prop}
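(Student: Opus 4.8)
The plan is to bootstrap from the classical Stirling formula for $(n!)_{n\in\mathbb{N}}$ (the preceding proposition) to arbitrary real arguments, using the functional equation $\Gamma(z+1)=z\Gamma(z)$ together with the log-convexity of $\Gamma$ on $(0,\infty)$ (which itself follows from Hölder's inequality applied to the Euler integral, or may be quoted as standard). For $x$ large I would write $x=n+s$ with $n=\lfloor x\rfloor$ and $s=\{x\}\in[0,1)$; the goal is then to estimate $\Gamma(1+x)=\Gamma(n+1+s)$.

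First I would record the two-sided bound coming from convexity of $t\mapsto\log\Gamma(t)$: writing $n+1+s$, respectively $n+1$, as a convex combination of consecutive integer-spaced shifts and simplifying with $\Gamma(t+1)=t\Gamma(t)$, one obtains
$$\frac{(n+1)!}{(n+1+s)^{1-s}}\;\le\;\Gamma(n+1+s)\;\le\;(n+1)^{s}\,n!.$$
The key point is that the ratio of the upper bound to the lower bound equals $\bigl(\tfrac{n+1+s}{n+1}\bigr)^{1-s}$, which lies between $1$ and $\tfrac{n+2}{n+1}$ and hence tends to $1$ as $n\to\infty$, uniformly in $s\in[0,1)$; so these two bounds pin down $\Gamma(1+x)$ up to a factor $1+o(1)$.

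Next I would feed the classical equivalent $n!\sim\sqrt{2\pi n}\,(n/e)^{n}$ into the upper bound and check, by taking logarithms, that
$$(n+1)^{s}\,n!\;\sim\;\sqrt{2\pi(n+s)}\left(\frac{n+s}{e}\right)^{n+s}\qquad(x\to\infty),$$
the only nontrivial limits being $(1+s/n)^{n}\to e^{s}$ and $\bigl((n+s)/(n+1)\bigr)^{s}\to 1$, both uniform for $s\in[0,1)$; note also $\sqrt{(n+s)/n}\to 1$. Combined with the squeeze of the previous step (the lower bound is asymptotic to the same quantity, being within a factor $1+o(1)$ of the upper bound), this gives $\Gamma(1+x)\sim\sqrt{2\pi x}\,(x/e)^{x}$.

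The main obstacle is precisely that the naive sandwich $n!\le\Gamma(1+x)\le(n+1)!$ (valid once $\Gamma$ is increasing, i.e.\ past its minimum near $1.4616$) is far too crude, since $(n+1)!/n!=n+1\not\to 1$; one genuinely needs the sharper log-convex estimate above so that the enclosing interval has multiplicative width $1+o(1)$, and one must track that shifting $x$ by $O(1)$ changes $\log\bigl(\sqrt{2\pi x}(x/e)^{x}\bigr)$ by $O(\log x)$, not by $O(1)$. A self-contained alternative, if one prefers to avoid convexity, is Laplace's method applied to $\Gamma(1+x)=x^{x+1}\int_{0}^{\infty}e^{x(\log u-u)}\,du$: the phase $\log u-u$ has a nondegenerate maximum at $u=1$ with value $-1$ and second derivative $-1$, giving $\int_{0}^{\infty}e^{x(\log u-u)}\,du\sim e^{-x}\sqrt{2\pi/x}$ and hence the claim directly — the cost there being the justification of the tail bound and the local quadratic approximation in Laplace's lemma, which is where the actual work would sit.
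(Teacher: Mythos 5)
The paper does not actually prove this proposition: it is quoted as a classical fact (and immediately superseded by the Laplace and Tricomi--Erd\'elyi refinements, which are cited to \cite{Comtet_1974} and \cite{Erdelyi-Tricomi_1951}). So there is no "paper proof" to match; judged on its own, your interpolation argument is correct and self-contained modulo the log-convexity of $\Gamma$, which is indeed standard (H\"older on the Euler integral). The two convexity bounds check out: writing $n+1+s=(1-s)(n+1)+s(n+2)$ gives $\Gamma(n+1+s)\le n!\,(n+1)^{s}$, and writing $n+2=s(n+1+s)+(1-s)(n+2+s)$ together with $\Gamma(n+2+s)=(n+1+s)\Gamma(n+1+s)$ gives $\Gamma(n+1+s)\ge (n+1)!/(n+1+s)^{1-s}$; their ratio is $\bigl(\tfrac{n+1+s}{n+1}\bigr)^{1-s}\to 1$ uniformly in $s$, as you say. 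You correctly identify the one genuine pitfall — the naive sandwich $n!\le\Gamma(1+x)\le(n+1)!$ is off by an unbounded factor — and the limit computation matching $(n+1)^{s}n!$ to $\sqrt{2\pi(n+s)}\,((n+s)/e)^{n+s}$ reduces to $(1+s/n)^{n}\to e^{s}$ uniformly on $[0,1)$, which is fine. Your alternative via Laplace's method on $x^{x+1}\int_0^\infty e^{x(\log u-u)}\,du$ is the textbook route and also correct; it has the advantage of not needing the discrete case as input, at the cost of the localization and tail estimates you mention. Either version would serve as a proof here; the paper simply chose to cite the result.
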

            
            This asymptotic can be improved: 
			\begin{Prop}\textit{Laplace formula}

				There exist constants $c_1,c_2,...$ such that as $x$ goes to $\infty$, for any non-negative integer $K$, we have
				$$\Gamma(x+1)=\sqrt{2\pi x}\left(\frac{x}{e}\right)^x \left(1+\sum_{l=1}^{K-1} \frac{c_l}{x^l} + O\left(\frac{1}{x^K}\right)\right),$$
				where the constants $c_l$ are calculable (See\cite{Comtet_1974} p.267 *23)\footnote{$c_l$ is a rational number whose numerator is given by the sequence \href{https://oeis.org/A001163}{OEI\_A001163} and whose denominator is given by the sequence \href{https://oeis.org/A001164}{OEI\_A001164}.}.

				The first constants are given by:
				$$ c_1=\frac{1}{12}; \quad c_2= \frac{1}{288}; \quad c_3=- \frac{139}{51840}; \quad c_4= -\frac{571}{2488320 }; \;...$$
			\end{Prop}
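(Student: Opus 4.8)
The plan is to apply Laplace's method to the integral representation $\Gamma(x+1)=\int_0^\infty t^x e^{-t}\,dt$. First I would rescale by $t=xu$ to extract the dominant factor, writing $\Gamma(x+1)=x^{x+1}e^{-x}\int_0^\infty e^{-x\varphi(u)}\,du$ with $\varphi(u)=u-1-\ln u$. The function $\varphi$ is non-negative on $(0,\infty)$, vanishes only at $u=1$ where $\varphi(1)=\varphi'(1)=0$ and $\varphi''(1)=1$, is strictly convex, and tends to $+\infty$ at both endpoints; hence the whole mass of the integral concentrates near $u=1$ as $x\to\infty$, and the contribution of any region $|u-1|\ge\eta$ is $O(e^{-c(\eta)x})$.

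The second step is a change of variable linearizing the phase: set $\tfrac12 s^2=\varphi(u)$ with the sign convention $\operatorname{sign}(s)=\operatorname{sign}(u-1)$. Since $\varphi(u)=\tfrac12(u-1)^2\big(1+O(u-1)\big)$ with an analytic, strictly positive correction factor, the function $\sqrt{\varphi(u)/(\tfrac12(u-1)^2)}$ is analytic and non-vanishing near $u=1$, so $u\mapsto s$ is an analytic diffeomorphism of a neighbourhood of $1$ onto a neighbourhood of $0$ (and globally a smooth increasing bijection $(0,\infty)\to(-\infty,\infty)$); equivalently this is the Morse lemma at the non-degenerate critical point. Write $u=1+s+\sum_{n\ge 2}a_n s^n$ and $\psi(s):=\frac{du}{ds}=\sum_{n\ge 0}d_n s^n$ with $d_0=1$. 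Then $\Gamma(x+1)=x^{x+1}e^{-x}\int_{-\infty}^{\infty}e^{-xs^2/2}\psi(s)\,ds$.

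The third step is term-by-term Gaussian integration with explicit error control. Split the $s$-integral at $|s|=\delta$ for a small fixed $\delta$: on $|s|>\delta$ one has $e^{-xs^2/2}\le e^{-x\delta^2/2}$ while $\psi$ grows at most linearly at $+\infty$ (because $\varphi(u)\sim u$ forces $s\sim\sqrt{2u}$) and decays at $-\infty$, so that tail is $O(e^{-c\delta^2 x})$, negligible against every power of $1/x$. On $|s|\le\delta$ I would Taylor-expand $\psi$ to order $2K-1$ with an $O(s^{2K})$ remainder (legitimate since $\psi$ is smooth on the compact interval), then use $\int_{-\infty}^{\infty}s^{2k}e^{-xs^2/2}\,ds=\sqrt{2\pi}\,(2k-1)!!\,x^{-k-1/2}$, extending each truncated integral to the whole line at the cost of a further exponentially small error and bounding the remainder by $C\int_{-\infty}^{\infty}|s|^{2K}e^{-xs^2/2}\,ds=O(x^{-K-1/2})$. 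Odd powers of $s$ contribute nothing by the symmetry $s\mapsto -s$, which is exactly why the expansion runs in integer powers of $1/x$. Collecting terms, $\Gamma(x+1)=\sqrt{2\pi}\,x^{x+1/2}e^{-x}\big(\sum_{l=0}^{K-1}(2l-1)!!\,d_{2l}\,x^{-l}+O(x^{-K})\big)=\sqrt{2\pi x}\,(x/e)^x\big(1+\sum_{l=1}^{K-1}c_l x^{-l}+O(x^{-K})\big)$ with $c_l=(2l-1)!!\,d_{2l}$, and the $c_l$ are then obtained by reverting the series $\tfrac12 s^2=u-1-\ln u$, matching the values $c_1=1/12$, $c_2=1/288,\dots$ of the cited references. (Alternatively one could derive the Stirling series for $\log\Gamma(x+1)$ via Euler–Maclaurin or Binet's integral and exponentiate, but the Laplace-method route is the one suggested by the name and gives the $c_l$ directly.)

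The step I expect to require the most care is the change of variable $\tfrac12 s^2=\varphi(u)$: one must verify the analyticity and non-vanishing of the square-root correction factor so that $s$ is an analytic function of $u$ with $s'(1)\neq 0$, and check that the resulting $\psi$ has the growth/decay needed for the tail bounds to hold uniformly in $x$. Everything after that is routine bookkeeping with Gaussian moments and Taylor remainders.
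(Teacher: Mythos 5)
Your argument is correct and is the standard Laplace-method (saddle-point) derivation of the Stirling series: the rescaling $t=xu$, the Morse-type change of variable $\tfrac12 s^2=u-1-\ln u$ (whose analyticity and invertibility near $u=1$ you rightly flag as the delicate point, and which does hold since $\varphi(u)/\tfrac12(u-1)^2$ is analytic and equals $1$ at $u=1$), the exponential tail estimates, and term-by-term Gaussian integration with the odd moments vanishing by symmetry all go through, and your formula $c_l=(2l-1)!!\,d_{2l}$ does reproduce $c_1=\tfrac1{12}$ from $d_2=\tfrac1{12}$. Note that the paper itself gives no proof of this proposition — it is stated as a classical fact with references to Comtet — so there is nothing to compare your route against; your write-up would serve as a self-contained proof of the cited result.
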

			There are many formulas for approximating the Gamma function. Most of them are numerically more efficient than Laplace's formula. See \cite{Luschny_2016} for a benchmark of the various formulas and algorithms for approximating the factorial function. The Chinese mathematician Weiping Wang in \cite{Wang_2016} proposed a unified approach to all these different formulas. They are all of the form.
			\begin{equation}
				\Gamma(x+1)\sim\sqrt{2\pi x} \left(\frac{x}{e}\right)^x \frac{1}{e^a}\left(1 + \frac{b}{x}\right)^{cx +d}\left(\sum_{n=0}^\infty \frac{\alpha_n}{(x+h)^n}\right)^{\frac{x^l}{r}+q}
			\end{equation}
			with $a,b,c,d,h,l,q,r$ non-negative real numbers, $r>0$. In our case we will only use Laplace's formula, which corresponds to the case: $a=b=h=l=q=0$ and $r=1$.
			
			In the rest of this paper we will use the Tricomi-Erdelyi's formula \footnote{See\cite{Erdelyi-Tricomi_1951} or \cite{Luke_Vol_I_1969} p.33, for the original proof. See also\cite{Buric-Elezovic_2011} for a study of the constants \og $e_i$\fg{} .}:
			\begin{Prop}[Erdelyi-Tricomi (1951)]
			Let $a,b$ be two fixed complex numbers. There exist constants $(e_l)_{l\geq 1}$ such that for any non-negative integer $K\geq 0$ and for any $\varepsilon>0$, we have the equality
			\begin{equation}\label{Tricomi-Erdelyi_formula}
			    \frac{\Gamma(z+a)}{\Gamma(z+b)}=z^{a-b}\left(1+\sum_{l=1}^{K-1} \frac{e_k}{z^k} + O\left(z^{-K}\right)\right).
			\end{equation}
			satisfied for any complex number $z$ such that $|Arg(z+a)|<\pi-\varepsilon$.
			\end{Prop}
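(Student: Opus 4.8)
The plan is to reduce to the case $b=0$ and then linearise through the logarithm, relying on the \emph{sectorial} form of Stirling's expansion rather than on the real-variable Laplace formula recalled above. First, write
\[
\frac{\Gamma(z+a)}{\Gamma(z+b)}=\frac{\Gamma(z+a)}{\Gamma(z)}\cdot\Bigl(\frac{\Gamma(z+b)}{\Gamma(z)}\Bigr)^{-1}.
\]
Since the principal powers of Definition~\ref{def_power_functions} satisfy $\pi_{a}(z)\,\pi_{-b}(z)=\pi_{a-b}(z)$, it suffices to prove that
\[
\frac{\Gamma(z+a)}{\Gamma(z)}=z^{a}\Bigl(1+\sum_{l=1}^{K-1}\frac{e_l}{z^{l}}+O(z^{-K})\Bigr)
\]
on a sector $|Arg\,z|<\pi-\varepsilon'$: the general statement then follows because the quotient of two asymptotic series of the shape $1+\sum_{l\geq1}\star\,z^{-l}+O(z^{-K})$ is again of that shape, and because on $|Arg(z+a)|<\pi-\varepsilon$ one has $Arg\,z=Arg(z+a)+O(1/|z|)$ and $Arg(z+b)=Arg(z+a)+O(1/|z|)$ as $|z|\to\infty$, so the three sectors coincide outside a compact set (and the statement is about $|z|\to\infty$ only).

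For $b=0$ I would take logarithms and invoke the classical Stirling–Binet expansion, valid uniformly on $|Arg\,w|<\pi-\varepsilon$:
\[
\log\Gamma(w)=\Bigl(w-\tfrac12\Bigr)\log w-w+\tfrac12\log(2\pi)+\sum_{n=1}^{N-1}\frac{B_{2n}}{2n(2n-1)}\,w^{1-2n}+O\bigl(w^{1-2N}\bigr).
\]
Subtracting this at $w=z$ from the same at $w=z+a$, and inserting $\log(z+a)=\log z+\sum_{k\geq1}\frac{(-1)^{k-1}a^{k}}{k}\,z^{-k}$ together with $(z+a)^{1-2n}=z^{1-2n}\bigl(1+O(1/z)\bigr)$, every contribution regroups: the coefficient of $\log z$ is $(z+a-\tfrac12)-(z-\tfrac12)=a$; the constant term vanishes, since the piece $-(z+a)+z=-a$ coming from the linear parts is cancelled by the order-$z^{0}$ term (equal to $a$) of $(z+a-\tfrac12)\log(1+a/z)$; and what remains is an honest power series in $z^{-1}$. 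Hence
\[
\log\frac{\Gamma(z+a)}{\Gamma(z)}=a\log z+\sum_{l=1}^{K-1}\frac{d_l}{z^{l}}+O(z^{-K}),\qquad d_1=\tfrac12\,a(a-1),
\]
with the $d_l$ universal polynomials in $a$ and the Bernoulli numbers, independent of $K$. Exponentiating, using $\exp(a\log z)=\pi_{a}(z)=z^{a}$ and the formal-exponential identity $\exp\bigl(\sum_{l\geq1}d_l z^{-l}+O(z^{-K})\bigr)=1+\sum_{l=1}^{K-1}e_l z^{-l}+O(z^{-K})$ (the remainder stays $O(z^{-K})$ because the exponent is $O(1/z)$), gives the claim for $b=0$, hence in general, with the $e_l$ independent of $z$ and of $K$.

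The step I expect to be the real obstacle is the \emph{uniformity over the whole sector} $|Arg(z+a)|<\pi-\varepsilon$: the real-variable Laplace/Stirling formula recalled earlier does not provide it, so one must either cite or reprove the complex Stirling expansion with a remainder that is $O(w^{1-2N})$ uniformly on $|Arg\,w|<\pi-\varepsilon$ — typically from Binet's integral representation of $\log\Gamma$, or from Hankel's loop integral for $1/\Gamma$ — and then push the error term carefully through the logarithm, the power-series algebra and the exponential while keeping all implied constants uniform in $z$. A more self-contained alternative, when $Re(a)>0$, is to start from the Beta integral
\[
\frac{\Gamma(z)\Gamma(a)}{\Gamma(z+a)}=\int_0^{\infty}e^{-zs}(1-e^{-s})^{a-1}\,ds,
\]
expand $(1-e^{-s})^{a-1}=s^{a-1}\sum_{m\geq0}\phi_m s^{m}$ near $s=0$, and apply the sectorial version of Watson's lemma to get $\int_0^{\infty}e^{-zs}(1-e^{-s})^{a-1}\,ds=z^{-a}\Gamma(a)\bigl(1+\sum_{l\geq1}\psi_l z^{-l}+O(z^{-K})\bigr)$; inverting this and then removing the restriction $Re(a)>0$ via the functional equation $\Gamma(w+1)=w\Gamma(w)$ yields the same conclusion. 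Everything beyond this sectorial uniformity is bookkeeping with power series.
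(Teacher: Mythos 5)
Your argument is essentially correct, but note that the paper does not prove this proposition at all: it is quoted as a classical result with a pointer to Erd\'elyi--Tricomi, Luke, and Buri\'c--Elezovi\'c, so there is no in-paper proof to match against. Your derivation is a legitimate one. The reduction to $b=0$ by splitting off $\Gamma(z)$ is harmless (formal inversion of a series with leading term $1$, and the three sectors agreeing outside a compact set, are both fine since the claim only concerns $|z|\to\infty$), the bookkeeping after inserting $\log(z+a)=\log z+\log(1+a/z)$ is right --- the coefficient of $\log z$ is $a$, the constant terms cancel, and $d_1=\tfrac12 a(a-1)$ is correct --- and exponentiating an $O(1/z)$ exponent preserves the shape of the expansion. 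You correctly identify the one genuine external input: the \emph{uniform sectorial} Stirling--Binet expansion of $\log\Gamma$ on $|Arg\,w|<\pi-\varepsilon$, which the real-variable Laplace formula recalled in the paper does not give; this must be cited or reproved from Binet's integral, exactly as you say. Your alternative via the Beta integral and Watson's lemma is in fact closer in spirit to the original Tricomi--Erd\'elyi argument, and has the advantage of producing directly the closed form of the coefficients $e_l$ in terms of generalized Bernoulli polynomials $B_l^{(a-b+1)}(a)$ that the paper records in the Remark immediately following the proposition; the logarithmic route yields the $e_l$ only as universal polynomials in $a,b$ and ordinary Bernoulli numbers, which then have to be re-identified. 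Either way, your proposal supplies a proof where the paper supplies only a reference, and I see no gap beyond the explicitly flagged dependence on the sectorial Stirling expansion.
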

			\begin{Rem}
	The constants $e_1,e_2,...$ in the previous proposition are computable (See\cite{Erdelyi-Tricomi_1951},\cite{Luke_Vol_I_1969} p.33 or\cite{Buric-Elezovic_2011}): For any integer $l\geq 1$, we have
	\begin{equation}
		e_l=(-1)^l\frac{(a-b+1)(a-b+2)\cdot\cdot (a-b+l)}{l!}B_l^{(a-b+1)}(a)
	\end{equation}
	where\og $B_l^{(\beta)}$\fg{} for $\beta\in\mathbb{C}$ is the generalized Bernoulli polynomial function defined by the identification for $|t|<2\pi$
	\begin{equation}
		\frac{t^{\beta}e^{z t}}{(e^t-1)^{\beta}}=\sum_{l=0}^\infty \frac{t^l}{k!}B_l^{(\beta)}(z).
	\end{equation}
	
\end{Rem}
\subsection{Asymptotic Expansion to any order for square-root singularities}

\begin{Lem}\label{Decomposition_Lemma}\textit{Decomposition Lemma}

	Let $h$ be a holomorphic function on a domain $\Omega\subset\mathbb{C}$. For any $z_0\in\Omega$, and for any integer $m\in\mathbb{N}$, there exists a holomorphic function $\tilde{h}_m$ on $\Omega$, such that:
	\begin{gather*}
		\forall z\in\Omega,\; h(z)=\sum_{k=0}^{m-1}\frac{h^{(k)}(z_0)}{k!} (z-z_0)^k + (z-z_0)^m\tilde{h}_m(z)\\
		\text{and}\\
		\tilde{h}_m(z_0)=\frac{h^{(m)}(z_0)}{m!},
	\end{gather*}
	where $h^{(k)}$ is the notation taken for the $k$-th derivative of $h$.
\end{Lem}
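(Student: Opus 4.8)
The plan is to write down the candidate function $\tilde h_m$ explicitly on $\Omega\setminus\{z_0\}$ and then show it extends holomorphically across $z_0$ with the announced value. Concretely, I would set
$$\tilde h_m(z)=\frac{h(z)-\sum_{k=0}^{m-1}\frac{h^{(k)}(z_0)}{k!}(z-z_0)^k}{(z-z_0)^m},\qquad z\in\Omega\setminus\{z_0\}.$$
As a quotient of holomorphic functions whose denominator does not vanish on $\Omega\setminus\{z_0\}$, this is holomorphic there, and the defining identity $h(z)=\sum_{k=0}^{m-1}\frac{h^{(k)}(z_0)}{k!}(z-z_0)^k+(z-z_0)^m\tilde h_m(z)$ holds on $\Omega\setminus\{z_0\}$ by construction. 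So the only thing left to do is understand the behaviour of $\tilde h_m$ near $z_0$.

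For that I would invoke the local Taylor expansion: since $h$ is holomorphic on $\Omega$, there is an open disk $D(z_0,r)\subset\Omega$ on which $h(z)=\sum_{n\ge 0}\frac{h^{(n)}(z_0)}{n!}(z-z_0)^n$, the series converging normally on compact subsets of $D(z_0,r)$. Subtracting the first $m$ terms and dividing by $(z-z_0)^m$ gives, for $0<|z-z_0|<r$,
$$\tilde h_m(z)=\sum_{n\ge m}\frac{h^{(n)}(z_0)}{n!}(z-z_0)^{n-m}=\sum_{j\ge 0}\frac{h^{(m+j)}(z_0)}{(m+j)!}(z-z_0)^{j},$$
and the right-hand side is a power series with the same radius of convergence (shifting and discarding finitely many initial terms does not change it), hence defines a holomorphic function on all of $D(z_0,r)$ that coincides with $\tilde h_m$ on the punctured disk. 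Gluing this with the already-holomorphic $\tilde h_m$ on $\Omega\setminus\{z_0\}$ — the two agree on the open set $D(z_0,r)\setminus\{z_0\}$, so the identity theorem makes the gluing consistent — yields a holomorphic extension of $\tilde h_m$ to $\Omega$. Reading off the constant term of the displayed series gives $\tilde h_m(z_0)=h^{(m)}(z_0)/m!$, and the decomposition identity, valid off $z_0$ between two holomorphic functions, extends to all of $\Omega$ by continuity.

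There is no genuine obstacle here — the statement is just Taylor's theorem packaged with a holomorphic remainder — but the one point deserving care is the removable-singularity/gluing step: one must check that the series obtained for $\tilde h_m$ really has the same radius of convergence as that of $h$ at $z_0$, and that the holomorphic function it defines on $D(z_0,r)$ is consistent with the explicit quotient formula on the rest of $\Omega$, which is exactly where the identity theorem is used. An entirely equivalent alternative is induction on $m$: the case $m=1$ is the assertion that $z\mapsto(h(z)-h(z_0))/(z-z_0)$ extends holomorphically with value $h'(z_0)$, and the inductive step applies this to the remainder $\tilde h_m$ and identifies $\tilde h_m'(z_0)$ with $h^{(m+1)}(z_0)/(m+1)!$ by differentiating the order-$m$ identity. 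I would present the direct power-series argument, as it is the shortest and makes the value at $z_0$ transparent.
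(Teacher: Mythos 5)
Your proposal is correct and follows essentially the same route as the paper, which simply observes that $h(z)-\sum_{k<m}\frac{h^{(k)}(z_0)}{k!}(z-z_0)^k$ has a zero of order at least $m$ at $z_0$ and declares this immediate; your power-series computation is just that observation written out, together with the value of $\tilde h_m(z_0)$. No issues.
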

\begin{proof}
	All we need to do is check that the function $h_m:\Omega\to\mathbb{C}$ such that for any complex number $z\in\Omega$,
	$$ h_m(z)= h(z)-h(z_0)-h'(z_0)(z-z_0)-...-\frac{h^{(m-1)}(z_0)}{(m-1)!}(z-z_0)^{m-1}$$
	admits a zero of order at least $m$ at $z=z_0$. This is immediate.
\end{proof}

\begin{Lem}\label{Root_lemma}~

	Let $\beta\notin\mathbb{N}$ be a real number. Let $\sum_{n\in\mathbb{N}} c_n(\beta) z^n$ be the power series expansion of the holomorphic map $z\mapsto (1-z)^\beta$, in the neighbourhood of $z=0$. 
	Then, when $n\in\mathbb{N}$ goes to $\infty$, we have the equivalent:
	$$ c_n(\beta)\sim - \frac{\beta}{\Gamma(1-\beta)}\frac{1}{n^{\beta+1}}$$
	where $\Gamma$ is Euler's gamma function.
\end{Lem}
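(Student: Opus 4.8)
The plan is to compute $c_n(\beta)$ explicitly using the binomial series and then apply Stirling's / the Tricomi--Erdelyi asymptotics. First I would write down the binomial expansion
\[
(1-z)^\beta=\sum_{n\geq 0}\binom{\beta}{n}(-z)^n,
\qquad
\binom{\beta}{n}=\frac{\beta(\beta-1)\cdots(\beta-n+1)}{n!},
\]
so that $c_n(\beta)=(-1)^n\binom{\beta}{n}$. The next step is to turn this finite product into a ratio of Gamma values: using $\Gamma(x+1)=x\Gamma(x)$ repeatedly one gets, for $\beta\notin\mathbb{N}$,
\[
c_n(\beta)=(-1)^n\frac{\Gamma(\beta+1)}{\Gamma(n+1)\,\Gamma(\beta-n+1)}
=\frac{\Gamma(n-\beta)}{\Gamma(n+1)\,\Gamma(-\beta)},
\]
where the second equality comes from the reflection-type identity $\Gamma(\beta-n+1)\Gamma(n-\beta)=(-1)^{n}\Gamma(\beta+1-n+n)\cdots$; more cleanly, one verifies $(-1)^n\Gamma(\beta+1)/\Gamma(\beta+1-n)=\Gamma(n-\beta)/\Gamma(-\beta)$ directly by induction on $n$ (both sides multiply by $n-1-\beta$ when $n\to n+1$, and agree at $n=0$). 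I should double-check the normalization: at $n=0$ the right side is $\Gamma(-\beta)/\Gamma(1)/\Gamma(-\beta)=1=c_0(\beta)$, as required.

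Having the closed form $c_n(\beta)=\dfrac{1}{\Gamma(-\beta)}\cdot\dfrac{\Gamma(n-\beta)}{\Gamma(n+1)}$, the asymptotics follow from the Tricomi--Erdelyi formula (\ref{Tricomi-Erdelyi_formula}) applied with $z=n$, $a=-\beta$, $b=1$: it gives
\[
\frac{\Gamma(n-\beta)}{\Gamma(n+1)}=n^{-\beta-1}\bigl(1+O(n^{-1})\bigr),
\]
valid since $|Arg(n-\beta)|\to 0<\pi-\varepsilon$ for large $n$. Therefore
\[
c_n(\beta)=\frac{1}{\Gamma(-\beta)}\,\frac{1}{n^{\beta+1}}\bigl(1+O(n^{-1})\bigr).
\]
Finally I would reconcile this with the stated form $-\dfrac{\beta}{\Gamma(1-\beta)}$ using the functional equation $\Gamma(1-\beta)=-\beta\,\Gamma(-\beta)$, i.e. $\dfrac{1}{\Gamma(-\beta)}=-\dfrac{\beta}{\Gamma(1-\beta)}$, which is exactly the claimed constant; note the hypothesis $\beta\notin\mathbb{N}$ (indeed $\beta$ real and non-integer) guarantees that $-\beta$ is not a non-positive integer, so $\Gamma(-\beta)$ is finite and nonzero and all the manipulations are legitimate.

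The argument is essentially a bookkeeping exercise, so I do not anticipate a genuine obstacle; the one point deserving care is the passage from the finite product $\binom{\beta}{n}$ to the Gamma ratio and the correct tracking of the sign $(-1)^n$ and of which argument carries the shift — getting $\Gamma(n-\beta)/\Gamma(n+1)$ rather than, say, $\Gamma(n+1)/\Gamma(n-\beta)$ or an off-by-one variant. I would verify this by checking the recursion $c_{n+1}(\beta)/c_n(\beta)=(n-\beta)/(n+1)$ directly from the binomial coefficients and confirming the proposed closed form satisfies the same recursion with the same initial value. Once the closed form is pinned down, invoking (\ref{Tricomi-Erdelyi_formula}) and the functional equation of $\Gamma$ finishes the proof immediately; in fact the same computation, keeping more terms in (\ref{Tricomi-Erdelyi_formula}), yields a full asymptotic expansion of $c_n(\beta)$, which is presumably what the later results in this section exploit.
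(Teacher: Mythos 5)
Your proposal is correct and follows essentially the same route as the paper: both reduce the coefficient to the closed form $c_n(\beta)=-\frac{\beta}{\Gamma(1-\beta)}\frac{\Gamma(n-\beta)}{\Gamma(n+1)}$ and then estimate that Gamma ratio for large $n$. The only difference is that you invoke the Tricomi--Erdelyi formula where the paper's proof uses plain Stirling; this is harmless and in fact exactly the refinement the paper itself carries out in Lemma \ref{Advanced_Root_Lemma}, so your argument proves slightly more than this lemma requires.
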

\begin{proof}
	The Taylor formula for the coefficients of the Taylor series expansion in the neighbourhood of $z=0$ of $z\mapsto (1-z)^\beta$ gives the values:
	\begin{equation}\label{eq-t1}
		c_n(\beta)=
		\left\lbrace
			\begin{matrix}
				&1, \text{if } n=0\\
				&(-1)^n\frac{\beta(\beta-1)...(\beta-n+1)}{n!}, \text{if } n\geq 1
			\end{matrix}
		\right.
	\end{equation}
	So for $n\geq 1$, $c_n(\beta)=\dfrac{-\beta}{n!}(1-\beta)\cdot\cdot\cdot(n-1-\beta)=-\frac{\beta}{\Gamma(n+1)}\frac{\Gamma(n-\beta)}{\Gamma(1-\beta)}$ where we used the following property to simplify (\ref{eq-t1}): 
	$$\forall z: Re(z)>0,\, \Gamma(z+1)=z\Gamma(z).$$
	Finally, we apply Stirling's formula (\ref{Stirling's_formula}) to obtain when $n$ tends to $\infty$:
	\begin{align*}
		c_n(\beta)&\sim\frac{-\beta}{\Gamma(1-\beta)}\frac{\sqrt{2\pi(n-\beta+1)}}{\sqrt{2\pi n}}
		\left(\frac{n-\beta-1}{e}\right)^{n-\beta-1}
		\left(\frac{e}{n}\right)^n\\
		&\sim \frac{-\beta}{\Gamma(1-\beta)}\frac{1}{n^{\beta+1}}e^{\beta+1}
		\underbrace{\left(1-\frac{\beta+1}{n}\right)^{n-\beta-1}}_{\rightarrow e^{-(\beta+1)}}.
	\end{align*}
	Hence:
	$$c_n(\beta)\sim -\frac{\beta}{\Gamma(1-\beta)}\frac{1}{n^{\beta+1}}$$
\end{proof}
By using a stronger approximation than the Stirling formula, namely the Tricomi-Erdelyi formula (\ref{Tricomi-Erdelyi_formula}) to estimate the quotient $\frac{\Gamma(n-\beta)}{\Gamma(n+1)}$, in the previous proof, we can improve the conclusion of the lemma for free:
\begin{Lem}\label{Advanced_Root_Lemma}~

	Let $\beta\notin\mathbb{N}$ be a real number. Let $\sum_{n\in\mathbb{N}} c_n(\beta) z^n$ be the integer series expansion of the holomorphic map $z\mapsto (1-z)^\beta$ in the neighbourhood of $z=0$. 
	There are constants $e_1,e_2,...$ such that for any integer $m\in\mathbb{N}$ and any $n\in\mathbb{N}\setminus\{0\}$:
	$$c_n(\beta)=-\frac{\beta}{\Gamma(1-\beta)}\frac{1}{n^{\beta+1}}\left( 1 + \frac{e_1}{n} + \frac{e_2}{n^2} + ... + \frac{e_m}{n^m}+O_{n\to\infty} \left(\frac{1}{n^{m+1}}\right)\right)$$
\end{Lem}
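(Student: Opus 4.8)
The plan is to mimic the proof of Lemma \ref{Root_lemma} verbatim, but to replace the crude Stirling estimate of the ratio $\Gamma(n-\beta)/\Gamma(n+1)$ by the sharp Tricomi--Erdelyi expansion \eqref{Tricomi-Erdelyi_formula}. First I would recall, exactly as in the previous proof, that for $n\geq 1$ the Taylor coefficients of $z\mapsto(1-z)^\beta$ are
\[
c_n(\beta)=(-1)^n\frac{\beta(\beta-1)\cdots(\beta-n+1)}{n!}=-\frac{\beta}{\Gamma(1-\beta)}\cdot\frac{\Gamma(n-\beta)}{\Gamma(n+1)},
\]
using the functional equation $\Gamma(z+1)=z\Gamma(z)$ to telescope the product into the gamma quotient.

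Next I would apply Proposition \ref{Tricomi-Erdelyi_formula} (the Erdelyi--Tricomi formula) with the parameters $a=-\beta$ and $b=1$, and with $z=n$. Since $n$ is a positive integer, the hypothesis $|Arg(z+a)|=|Arg(n-\beta)|<\pi-\varepsilon$ is satisfied for all large $n$ (indeed $n-\beta$ is eventually a positive real), so the formula yields, for every integer $m\geq 0$,
\[
\frac{\Gamma(n-\beta)}{\Gamma(n+1)}=\frac{\Gamma(n+a)}{\Gamma(n+b)}=n^{a-b}\left(1+\sum_{l=1}^{m}\frac{e_l}{n^l}+O\!\left(n^{-(m+1)}\right)\right)=n^{-\beta-1}\left(1+\sum_{l=1}^{m}\frac{e_l}{n^l}+O\!\left(n^{-(m+1)}\right)\right),
\]
where the constants $e_l=e_l(-\beta,1)$ depend only on $\beta$ and are given by the explicit Bernoulli-polynomial formula quoted in the Remark after that Proposition. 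Substituting this into the expression for $c_n(\beta)$ gives exactly the claimed identity
\[
c_n(\beta)=-\frac{\beta}{\Gamma(1-\beta)}\frac{1}{n^{\beta+1}}\left(1+\frac{e_1}{n}+\cdots+\frac{e_m}{n^m}+O_{n\to\infty}\!\left(\frac{1}{n^{m+1}}\right)\right).
\]

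There is essentially no obstacle here; the proof is a one-line substitution once the gamma-quotient has been isolated. The only points deserving a word of care are: (i) checking that the index shift $a-b=-\beta-1$ really reproduces the exponent $n^{-\beta-1}$ and that the leading constant $-\beta/\Gamma(1-\beta)$ is untouched (it is, since it does not depend on $n$); (ii) noting that the $O$-term is uniform in the sense required — for each fixed $m$ the bound holds as $n\to\infty$, which is exactly the statement; and (iii) remarking that the constants $e_l$ are the \emph{same} for every $m$ (they are the coefficients of a single asymptotic series), which justifies writing ``there are constants $e_1,e_2,\dots$'' once and for all rather than a family depending on $m$. I would close by pointing out that, in contrast to Lemma \ref{Root_lemma}, no separate limit computation of $(1-(\beta+1)/n)^{n-\beta-1}$ is needed, since the Erdelyi--Tricomi formula has already absorbed all such corrections into the coefficients $e_l$.
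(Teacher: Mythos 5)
Your proof is correct and takes exactly the same route as the paper's: isolate the quotient $\Gamma(n-\beta)/\Gamma(n+1)$ from the computation in Lemma \ref{Root_lemma} and substitute the Tricomi--Erdelyi expansion (\ref{Tricomi-Erdelyi_formula}) with $a=-\beta$, $b=1$, $z=n$. The additional checks you record (the parameter identification $a-b=-\beta-1$, the argument condition, and the independence of the $e_l$ from $m$) are all sound and merely make explicit what the paper leaves implicit.
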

\begin{proof}
	Let us resume the previous proof. We had obtained that
	$$c_n(\beta)=-\frac{\beta}{\Gamma(1-\beta)}\frac{\Gamma(n-\beta)}{\Gamma(n+1)}$$ 
	By Tricomi-Erdelyi's formula (\ref{Tricomi-Erdelyi_formula}),
	there exist constants $e_1,e_2,...$ that only depend on $\beta$ such that for any $n\in\mathbb{N}\setminus\{0\}$:
	\begin{equation}
		\frac{\Gamma(n-\beta)}{\Gamma(n+1)}=\frac{1}{n^{\beta+1}}\left(1 + \frac{e_1}{n} + \frac{e_2}{n^2} + ... + \frac{e_m}{n^m} + O_{n\to\infty}\left(\frac{1}{n^{m+1}}\right)\right)
	\end{equation} 
	The result follows.
\end{proof}

\begin{Lem}\label{Lemma_d_endadrement_Fourrier}~

	Let $\varphi:]0,2\pi[\to\mathbb{C}$ be a $\mathcal{C}^1$ function of integrable derivative, then for $n\in\mathbb{N}\setminus\{0\}$, if $c_n(\varphi)$ denotes the $n$-th Fourier coefficient of the function $\varphi$ then we have:
	$$c_n(\varphi)=\frac{1}{i n}( c_n(\varphi')- c_0(\varphi'))$$
	In particular, if $\displaystyle\lim_{\theta\to 0^+}\varphi(\theta)=\displaystyle\lim_{\theta\to 2\pi^-}\varphi(\theta)$, then $c_0(\varphi')=0$ and we immediately have $|c_n(\varphi)|\leq\frac{1}{n}||\varphi'||_{\mathcal{L}^1}$, with $||f||_{\mathcal{L}^1}=\frac{1}{2\pi} \int_0^{2\pi} |f(\theta)|d\theta$ for any integrable function $f$ on $]0,2\pi[$.
\end{Lem}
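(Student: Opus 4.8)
The plan is to prove this by integration by parts on the Fourier coefficient integral. Recall that the $n$-th Fourier coefficient of $\varphi$ is defined by $c_n(\varphi)=\frac{1}{2\pi}\int_0^{2\pi}\varphi(\theta)e^{-in\theta}\,d\theta$. Since $\varphi$ is $\mathcal{C}^1$ on $]0,2\pi[$ with integrable derivative, it extends continuously to $[0,2\pi]$ and one may integrate by parts legitimately; I would write $e^{-in\theta}=\frac{-1}{in}\frac{d}{d\theta}e^{-in\theta}$ so that
\begin{equation*}
c_n(\varphi)=\frac{1}{2\pi}\left[\frac{-1}{in}\varphi(\theta)e^{-in\theta}\right]_0^{2\pi}+\frac{1}{2\pi}\frac{1}{in}\int_0^{2\pi}\varphi'(\theta)e^{-in\theta}\,d\theta.
\end{equation*}
The boundary term is $\frac{-1}{2\pi in}\bigl(\varphi(2\pi)e^{-2in\pi}-\varphi(0)\bigr)=\frac{-1}{2\pi in}\bigl(\varphi(2\pi)-\varphi(0)\bigr)$ since $e^{-2in\pi}=1$ for integer $n$. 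The integral term is exactly $\frac{1}{in}c_n(\varphi')$.

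Next I would identify the boundary term with a multiple of $c_0(\varphi')$. By the fundamental theorem of calculus, $c_0(\varphi')=\frac{1}{2\pi}\int_0^{2\pi}\varphi'(\theta)\,d\theta=\frac{1}{2\pi}\bigl(\varphi(2\pi^-)-\varphi(0^+)\bigr)$, using the one-sided limits which exist because $\varphi$ is $\mathcal{C}^1$ up to the endpoints in the sense that $\varphi'$ is integrable. Substituting, the boundary term equals $\frac{-1}{in}c_0(\varphi')$, and combining with the integral term gives $c_n(\varphi)=\frac{1}{in}\bigl(c_n(\varphi')-c_0(\varphi')\bigr)$, which is the claimed identity.

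For the final assertion, if $\lim_{\theta\to 0^+}\varphi(\theta)=\lim_{\theta\to 2\pi^-}\varphi(\theta)$ then the formula for $c_0(\varphi')$ above gives $c_0(\varphi')=0$, so $c_n(\varphi)=\frac{1}{in}c_n(\varphi')$. Taking absolute values and bounding the integral defining $c_n(\varphi')$ trivially by $\frac{1}{2\pi}\int_0^{2\pi}|\varphi'(\theta)|\,d\theta=\|\varphi'\|_{\mathcal{L}^1}$ and using $|e^{-in\theta}|=1$ yields $|c_n(\varphi)|\le\frac{1}{n}\|\varphi'\|_{\mathcal{L}^1}$.

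The only delicate point, and the one I would be most careful about, is justifying the integration by parts and the evaluation of one-sided boundary values when $\varphi'$ is merely integrable rather than continuous on the closed interval: one should note that integrability of $\varphi'$ on $]0,2\pi[$ together with $\varphi'$ continuous on the open interval forces $\varphi$ to have finite limits at $0^+$ and $2\pi^-$ (it is the integral of an $L^1$ function), so that $\varphi$ extends to an absolutely continuous function on $[0,2\pi]$, which is exactly the hypothesis under which the integration-by-parts formula $\int_0^{2\pi}u v'=[uv]_0^{2\pi}-\int_0^{2\pi}u'v$ is valid with $u=\varphi$, $v=\frac{-1}{in}e^{-in\theta}$. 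Everything else is a routine computation.
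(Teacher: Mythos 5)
Your proof is correct and follows exactly the paper's route: the paper's own proof is the one-line instruction to integrate by parts in the integral defining $c_n(\varphi)$, which is precisely what you carry out, and your extra care in checking that integrability of $\varphi'$ forces $\varphi$ to extend absolutely continuously to $[0,2\pi]$ (so that the boundary terms and the identity $c_0(\varphi')=\frac{1}{2\pi}(\varphi(2\pi^-)-\varphi(0^+))$ make sense) is a welcome justification the paper leaves implicit.
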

\begin{proof}
	Recall that $c_n(\varphi)=\frac{1}{2\pi}\int_{0}^{2\pi} \varphi(\theta)e^{-in\theta} d\theta$. Simply perform an integration by parts on this integral to obtain the result.
\end{proof}

\begin{Thm}\textit{Tauberian theorem: Asymptotic expansion to the first order for square root singularities}\label{Thm_singularite_fractional}

	Let $g$ be a continuous function on $\overline{\mathbb{D}}$ whose restriction to $\mathbb{D}$ is holomorphic. Let $\displaystyle\sum_{n\in\mathbb{N}} a_n z^n$ be the power series expansion of $g$ in the neighbourhood of $z=0$.
	 Suppose there exists a function $h$ that is holomorphic in a neighbourhood of the closure of the set $\mathbb{D}(1,1)^{1/2}=\{\omega^{1/2}: |\omega-1|<1\}$ in $\mathbb{C}$, denoted $\overline{\mathbb{D}(1,1)^{1/2}}$, such that
	 \begin{equation*}
	 	\forall z\in\overline{\mathbb{D}},\, g(z)=h(\sqrt{1-z})
	\end{equation*} 
	Then we have the asymptotic expansion:
	\begin{equation}\label{eq-t24}
		a_n=-\frac{h'(0)}{2\sqrt{\pi}}\frac{1}{n^{3/2}} + o_{n\to\infty}\left(\frac{1}{n^{3/2}}\right)
	\end{equation}
\end{Thm}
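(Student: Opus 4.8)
The plan is to strip the singular part of $h$ off at the origin with the Decomposition Lemma, read the leading term from the Root Lemma, and kill the remainder with the Fourier‑coefficient bound of Lemma~\ref{Lemma_d_endadrement_Fourrier}. Throughout I would use that $z\mapsto\sqrt{1-z}$ is continuous on $\overline{\mathbb{D}}$ and holomorphic on $\mathbb{D}$ (since $1-z$ runs over $\overline{\mathbb{D}(1,1)}\subset\{\mathrm{Re}>0\}\cup\{0\}$, which avoids $]-\infty,0[$) and maps $\overline{\mathbb{D}}$ into $\overline{\mathbb{D}(1,1)^{1/2}}$, so that composing $h$, or any function holomorphic near $\overline{\mathbb{D}(1,1)^{1/2}}$, with $\sqrt{1-\cdot}$ yields a function continuous on $\overline{\mathbb{D}}$ and holomorphic on $\mathbb{D}$.

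First I would apply Lemma~\ref{Decomposition_Lemma} to $h$ at $w_0=0$ with $m=2$ (the origin lies in the domain of $h$), getting a holomorphic $\tilde{h}_2$ with $h(w)=h(0)+h'(0)\,w+w^{2}\tilde{h}_2(w)$. Substituting $w=\sqrt{1-z}$ gives, for all $z\in\overline{\mathbb{D}}$,
\[
  g(z)=h(0)+h'(0)(1-z)^{1/2}+(1-z)\,\tilde{h}_2\!\big(\sqrt{1-z}\big).
\]
By Lemma~\ref{Root_lemma} (with $\beta=1/2$) the coefficients $c_n(1/2)$ of $(1-z)^{1/2}$ are absolutely summable, so that series converges uniformly on $\overline{\mathbb{D}}$; identifying Taylor coefficients at $0$ yields, for $n\ge 1$,
\[
  a_n=h'(0)\,c_n(1/2)+b_n,
\]
where $b_n$ is the $n$-th Taylor coefficient of $R(z):=(1-z)\,\tilde{h}_2(\sqrt{1-z})$. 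Since $\Gamma(1/2)=\sqrt{\pi}$, Lemma~\ref{Root_lemma} gives $c_n(1/2)\sim-\tfrac{1}{2\sqrt{\pi}}\,n^{-3/2}$, so $h'(0)c_n(1/2)$ already produces the announced leading behaviour; it remains to prove $b_n=o(n^{-3/2})$, which is the substance of the statement.

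To estimate $b_n$, note $R$ is continuous on $\overline{\mathbb{D}}$ and holomorphic on $\mathbb{D}$, so Cauchy's formula on circles of radius $r<1$ together with $r\to1^-$ gives $b_n=c_n(\psi)$, where $\psi(\theta):=R(e^{i\theta})$. I would then examine $R$ near its only boundary singularity $z=1$: from $R(z)=(1-z)\,\tilde{h}_2(\sqrt{1-z})$ one computes
\[
  R'(z)=-\,\tilde{h}_2(\sqrt{1-z})-\tfrac12(1-z)^{1/2}\tilde{h}_2'(\sqrt{1-z}),\qquad
  R''(z)=\tfrac34(1-z)^{-1/2}\tilde{h}_2'(\sqrt{1-z})+\tfrac14\tilde{h}_2''(\sqrt{1-z}).
\]
Thus $R$ and $R'$ extend continuously to $\overline{\mathbb{D}}$, while $R''$ is continuous on $\overline{\mathbb{D}}\setminus\{1\}$ with $R''(z)=O(|1-z|^{-1/2})$ as $z\to1$. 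Hence $\psi$ and $\psi'$ are $\mathcal{C}^1$ on $]0,2\pi[$, with $\psi'$ continuous up to the endpoints and $\psi(0^+)=\psi(2\pi^-)$, $\psi'(0^+)=\psi'(2\pi^-)$, and $\psi''$ is integrable on $]0,2\pi[$ (the growth $|1-e^{i\theta}|^{-1/2}\sim|\theta|^{-1/2}$ near $0$, resp.\ $\sim|2\pi-\theta|^{-1/2}$ near $2\pi$, is integrable). Applying Lemma~\ref{Lemma_d_endadrement_Fourrier} first to $\psi$ (so $c_0(\psi')=0$) and then to $\psi'$ gives
\[
  c_n(\psi)=\frac{1}{(in)^{2}}\big(c_n(\psi'')-c_0(\psi'')\big),
\]
whence $|b_n|\le\frac{2}{n^{2}}\,\|\psi''\|_{\mathcal{L}^1}=O(n^{-2})=o(n^{-3/2})$. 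Combined with $h'(0)c_n(1/2)\sim-\tfrac{h'(0)}{2\sqrt{\pi}}n^{-3/2}$, this gives \eqref{eq-t24}.

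The main obstacle is this last step: after peeling, $R$ still carries a square-root singularity at $z=1$, and one must check that its boundary trace $\psi$ is smooth enough — here $\mathcal{C}^1$ with integrable second derivative, and with one-sided limits of $\psi$ and $\psi'$ matching at the endpoints so the relevant $c_0$-terms vanish — for the two integrations by parts packaged in Lemma~\ref{Lemma_d_endadrement_Fourrier} to be legitimate and to beat the rate $n^{-3/2}$. For cleaner bookkeeping one could instead apply Lemma~\ref{Decomposition_Lemma} to $h$ with $m=3$ or $m=4$: the extra explicit summands $(1-z)^{k/2}$ for $2\le k\le m-1$ are polynomials when $k$ is even and, when $k$ is odd, have coefficients $O(n^{-5/2})=o(n^{-3/2})$ by Lemma~\ref{Root_lemma} (with $\beta=3/2$); the remaining $(1-z)^{m/2}\tilde{h}_m(\sqrt{1-z})$ then vanishes to order $\ge\tfrac32$ at $z=1$, making the required regularity of its boundary trace transparent.
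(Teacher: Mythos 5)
Your proposal is correct and follows essentially the same route as the paper's proof: the Decomposition Lemma~\ref{Decomposition_Lemma} to peel off $h(0)+h'(0)\sqrt{1-z}$, Lemma~\ref{Root_lemma} for the leading coefficient $-\tfrac{1}{2\sqrt{\pi}}n^{-3/2}$, and two integrations by parts (Lemma~\ref{Lemma_d_endadrement_Fourrier}) applied to the boundary trace of the remainder to get $O(n^{-2})$. The only deviation is that you stop the decomposition at $m=2$ and check by the explicit formula for $R''$ that the second derivative is only $O(|1-z|^{-1/2})$, hence integrable with matching endpoint limits --- a cancellation the paper avoids having to see by taking $m$ large enough that $\tfrac{m}{2}-2>0$, as you yourself note in your closing remark.
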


\begin{proof}
	With the notations of the theorem. For $\beta\in\mathbb{R}$, $\theta\in ]0,2\pi[$, we set 
	$$p_\beta(\theta):=(1-e^{i \theta})^\beta.$$ According to Cauchy's formula\footnote{Cf. \cite{Queffelec_2017} Theorem 3.2 p.131} applied to $g$ which is continuous on $\overline{\mathbb{D}}$ and holomorphic on $\mathbb{D}$, the coefficients $a_n$ verify:
	$$a_n=\frac{1}{2i\pi}\int_{\partial\mathbb{D}}\frac{g(z)}{z^{n+1}}dz.$$
	Set $\alpha:=1/2$. Parametrizing the disk boundary $\partial\mathbb{D}$ by the function $\theta\mapsto e^{i\theta}$ defined on $]0,2\pi[$, we obtain:
	\begin{equation*}
		a_n=\frac{1}{2\pi}\int_0^{2\pi}g(e^{i\theta})e^{-in\theta}d\theta=\frac{1}{2\pi}\int_0^{2\pi}h(p_\alpha(\theta))e^{-in\theta}d\theta
	\end{equation*}

	Let $m\in\mathbb{N}$ be such that $\alpha m-2>0$ and let $\tilde{h}_m$ be the holomorphic function in a neighbourhood of $\overline{\mathbb{D}(1,1)^\alpha}$ coming from the Lemma \ref{Decomposition_Lemma} with $z_0=0$. The function $\tilde{h}_m$ verifies for any complex number $z$ in this neighbourhood,
	\begin{gather*}
		h(z)=\sum_{k=0}^{m-1} \frac{h^{(k)}(0)}{k!}  z^k + z^m\tilde{h}_m(z)\\
 		\text{and}\\
 		\tilde{h}_m(0)=\frac{h^{(m)}(0)}{m!}
	\end{gather*}
	Then,
	\begin{equation}\label{formula_of_a_n}
		a_n=\sum_{k=0}^{m-1}  \frac{h^{(k)}(0)}{k!}\left(\frac{1}{2\pi}\int_0^{2\pi} p_\alpha(\theta)^k e^{-in\theta} d\theta\right)
		+ \frac{1}{2\pi}\int_0^{2\pi} \tilde{h}_m(p_\alpha(\theta))p_\alpha(\theta)^m e^{-in\theta} d\theta
	\end{equation}
	Note that for any integer $k\in\mathbb{N}$, $p_\alpha(\theta)^k=p_{\alpha k}(\theta)$ and by the Lemma \ref{Root_lemma}, we already have estimates for the integrals $c_n(\alpha k)=\displaystyle \frac{1}{2\pi}\int_0^{2\pi} p_{\alpha k}(\theta) e^{-in\theta} d_\theta$. For any positive integer $n\in\mathbb{N}\setminus\{0\}$, we have:
	\begin{equation}
		\begin{matrix}\label{formula_of_c_n_and_a_n}
			k=0,& c_n(0)=\delta_0(n); \\
			k=1,&\,
			c_n(\alpha)=-\dfrac{\alpha}{\Gamma(1-\alpha)}\dfrac{1}{n^{\alpha+1}}+o_{n\to\infty}\left(\dfrac{1}{n^{\alpha+1}}\right);\\
			m-1\geq k>1,&\,c_n(\alpha k)=O_{n\to\infty}\left(\dfrac{1}{n^{k\alpha+1}}\right).&
		\end{matrix}
	\end{equation}
	Then
	$$ \frac{1}{2\pi}\int_0^{2\pi} \tilde{h}_m(p_\alpha(\theta))p_\alpha(\theta)^m e^{-in\theta} d_\theta = c_n\left((\tilde{h}_m\circ p_\alpha)\times p_{\alpha m}\right).$$
	The function $\varphi$ defined for $\theta\in[0,2\pi]$ by $\varphi(\theta)=\tilde{h}_m(p_\alpha(\theta))p_{\alpha m}(\theta)$ is two times differentiable, and its second derivative is integrable on $]0,2\pi[$. Indeed, remark that for any real number $\beta$, the function $p_\beta$ is integrable on $]0,2\pi[$ if, and only if $\beta>-1$ and is derivable, of derivative $p_\beta'(\theta)=(-ie^{i\theta})\beta p_{\beta-1}(\theta)$. 
	Using the Leibniz formula we have that the $l$-th derivative of $\varphi$ is equal to a sum of functions of the form 
	\begin{equation}\label{eq-t23}
		Ce^{i k \theta} \tilde{h}_m^{(l_1)}(p_\alpha(\theta))p_{(\alpha(m+l_1)-l_1-l_2)}(\theta),
	\end{equation}

	with $k\leq l$ and $l_1+l_2\leq l$. Functions of this forms are integrable over $]0,2\pi[$ as long as we have $\alpha(m+l_1)-l_1-l_2>-1$. Since $l_1+l_2\leq l$ and $\alpha\geq 0$, we have $\alpha(m+l_1)-l_1-l_2\geq \alpha m - l>-1$, which is satisfied for $l\leq 2$, by choice of $m$.

Note also that for any $\beta > 0$, we have \begin{equation}
	\lim_{\theta\to 0} p_\beta(\theta)=\lim_{\theta\to 2\pi}p_\beta(\theta).
\end{equation} From the inequality $\alpha m - l>0$, for $l\leq 2$,  the same analogous limit holds with functions (\ref{eq-t23}). We thus deduce: 
$$\lim_{\theta\to 0} \varphi^{(l)}(\theta)=\lim_{\theta\to 2\pi}\varphi^{(l)}(\theta)=0,$$ for any $l\leq 2$. And in particular $c_0(\varphi')=c_0(\varphi'')=0$
	 
	 Then, applying two times the Lemma  \ref{Lemma_d_endadrement_Fourrier}, to $\varphi$, then to $\varphi'$, knowing that $c_0(\varphi')=c_0(\varphi'')=0$, we obtain for $n\geq 1$:
	
	\begin{equation}\label{formula_of_the_rest_of_a_n}
		\frac{1}{2\pi}\int_0^{2\pi} \tilde{h}_m(p_\alpha(\theta))p_{\alpha m}(\theta) e^{in\theta} d_\theta =O_{n\to\infty}\left(\frac{1}{n^{2}}\right).
	\end{equation}
	Finally, considering that for any $m-1\geq k >1$, $O_{n\to\infty}\left(\dfrac{1}{n^{k\alpha+1}}\right)=o\left(\frac{1}{n^{\alpha+1}}\right)$, and that $O\left(\frac{1}{n^2}\right)=o\left(\frac{1}{n^{\alpha+1}}\right)$, by introducing (\ref{formula_of_c_n_and_a_n}) and (\ref{formula_of_the_rest_of_a_n}) into the formula of $a_n$ (\ref{formula_of_a_n}) we obtain for any integer $n\geq 1$:
	\begin{equation}\label{eq-t25}
		\begin{split}
		a_n&= h(0)\delta_0(n) - \frac{\alpha}{\Gamma(1-\alpha)}\frac{h'(0)}{n^{\alpha+1}}+o\left(\frac{1}{n^{\alpha+1}}\right) \\
		&=-\frac{\alpha}{\Gamma(1-\alpha)}\frac{h'(0)}{n^{\alpha+1}}+o\left(\frac{1}{n^{\alpha+1}}\right)
		\end{split}
	\end{equation}
	Remembering that $\alpha=1/2$ and $\Gamma(1/2)=\sqrt{\pi}$ we get the desired formula (\ref{eq-t24}).
\end{proof}

\begin{Rem}
	 Formula (\ref{eq-t25}) actually holds for any $\alpha\in ]0,1[$ whenever we are interested on a holomorphic function $g$ that has the form $g(z)=h((1-z)^\alpha)=\sum_n a_n z^n$, with $h$ being a holomorphic function in a neighbourhood of the compact set $\overline{\{(1-z)^{\alpha}: z\in\mathbb{D}(0,1)\}}$.
\end{Rem}
In view of the Lemma \ref{Advanced_Root_Lemma}, we can improve this theorem by giving the following orders in the asymptotic development of the coefficients \og $a_n$\fg{}.

\begin{Thm}\label{Thm_Tauberien_tout_ordre}\textit{Tauberian theorem: Asymptotic expansion to any order for square root singularities}

	Let $g$ be a continuous function on $\overline{\mathbb{D}}$ whose restriction to $\mathbb{D}$ is holomorphic. Let $\displaystyle\sum_{n\in\mathbb{N}} a_n z^n$ be the power series expansion of $g$ in a neighbourhood of $0$.
 	Suppose there exists a function $h$ that is holomorphic in a neighbourhood of $\overline{\mathbb{D}(1,1)^{1/2}}\subset\mathbb{C}$ such that
 	\begin{gather*}
 		\forall z\in\overline{\mathbb{D}},\, g(z)=h(\sqrt{1-z})
	\end{gather*}
	Then there exist a unique sequence of constants $(c_l)_{l\geq 1}$ and a constant $C$ such that for any positive integer $K\in\mathbb{N}\setminus\lbrace 0\rbrace$:
	\begin{equation}
		a_n=\frac{1}{n^{3/2}}\left(C+\frac{c_1}{n^{1}}+...+\frac{c_{K}}{n^{K}}+ O(\frac{1}{n^{K+1}})\right).
		\end{equation}
	And $C=-\frac{h'(0)}{2\sqrt{\pi}}$. Similar formulas can be obtained for the other constants $c_l$, but we won't calculate them here.
\end{Thm}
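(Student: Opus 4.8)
The plan is to rerun the proof of Theorem~\ref{Thm_singularite_fractional} while retaining every order in $1/n$, replacing the first-order estimate of Lemma~\ref{Root_lemma} by the full expansion of Lemma~\ref{Advanced_Root_Lemma}. Fix an integer $K\geq 1$. As before write $\alpha=1/2$, $p_\beta(\theta)=(1-e^{i\theta})^\beta$, and let $c_n(\beta)$ denote the $n$-th Fourier coefficient of $p_\beta$, equivalently (by Lemma~\ref{Root_lemma}) the $n$-th Taylor coefficient of $(1-z)^\beta$. Applying the Decomposition Lemma~\ref{Decomposition_Lemma} to $h$ at $z_0=0$ to order $m$, with $m$ an integer to be fixed large in terms of $K$ (say $m\geq 2K+6$), gives, exactly as in~(\ref{formula_of_a_n}),
\[
 a_n=\sum_{k=0}^{m-1}\frac{h^{(k)}(0)}{k!}\,c_n(\tfrac{k}{2})\;+\;R_{n,m},\qquad R_{n,m}:=c_n\!\big((\tilde h_m\circ p_\alpha)\cdot p_{\alpha m}\big).
\]

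First I would dispose of $R_{n,m}$. Arguing as in the proof of Theorem~\ref{Thm_singularite_fractional}, but iterating Lemma~\ref{Lemma_d_endadrement_Fourrier} $K+3$ times rather than twice: the function $\varphi=(\tilde h_m\circ p_\alpha)\cdot p_{\alpha m}$ is of class $\mathcal{C}^{K+3}$ with integrable derivatives, and its derivatives up to order $K+2$ all have equal (indeed zero) one-sided limits at $0$ and $2\pi$, because the explicit form~(\ref{eq-t23}) of $\varphi^{(l)}$ involves only powers $p_\gamma$ with exponent $\gamma\geq\alpha m-l=\tfrac m2-l>0$ for $l\leq K+2$, by the choice of $m$. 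Hence $R_{n,m}=O\!\big(n^{-(K+3)}\big)$, which is already absorbed in the error term claimed.

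It remains to expand the finite sum. For \emph{even} $k$ the function $(1-z)^{k/2}$ is a polynomial, so $c_n(\tfrac k2)=0$ for $n>k/2$; these terms --- including the $k=0$ term $h(0)\delta_0(n)$ --- do not contribute to the asymptotics. For \emph{odd} $k$ with $1\leq k\leq 2K+1$ we have $\tfrac k2\notin\mathbb{N}$, so Lemma~\ref{Advanced_Root_Lemma} applies: there are constants $e^{(k)}_j$ (with $e^{(k)}_0=1$) depending only on $k$ with
\[
 c_n(\tfrac k2)=-\frac{k/2}{\Gamma(1-k/2)}\,\frac{1}{n^{k/2+1}}\Big(\sum_{j=0}^{\,K}\frac{e^{(k)}_j}{n^{j}}+O\big(n^{-(K+1)}\big)\Big),
\]
while for odd $k$ with $2K+1<k<m$ one simply has $c_n(\tfrac k2)=O(n^{-k/2-1})=O\big(n^{-(K+5/2)}\big)$. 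Each admissible term $\tfrac{h^{(k)}(0)}{k!}c_n(\tfrac k2)$ is a combination of powers $n^{-3/2-p}$ with $p=\tfrac{k-1}{2}+j\in\mathbb{N}$; collecting by the value of $p$, the coefficient of $n^{-3/2-p}$ in $a_n$ is the \emph{finite} sum (only odd $k\leq 2p+1$ occur)
\[
 c_p=-\sum_{\substack{k\ \mathrm{odd}\\ 1\leq k\leq 2p+1}}\frac{h^{(k)}(0)}{k!}\cdot\frac{k/2}{\Gamma(1-k/2)}\;e^{(k)}_{\,p-(k-1)/2}.
\]
For $p=0$ only $k=1$ survives and $c_0=-\tfrac{1/2}{\Gamma(1/2)}h'(0)=-\tfrac{h'(0)}{2\sqrt\pi}$, matching Theorem~\ref{Thm_singularite_fractional}; so $C:=c_0$ has the announced value. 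Truncating at $p=K$ and absorbing into $O\big(n^{-3/2}\cdot n^{-(K+1)}\big)$ all the remaining pieces --- namely $R_{n,m}$, the $O(n^{-(K+1)})$ tails from Lemma~\ref{Advanced_Root_Lemma} for each of the finitely many odd $k\leq 2K+1$, and the whole contribution of the odd $k>2K+1$ --- yields the stated expansion with $(c_l)_{l\geq1}$ as above.

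Finally, uniqueness of $(c_l)$ is automatic: $(n^{-3/2-l})_{l\geq 0}$ is an asymptotic scale, so $C=\lim_{n}n^{3/2}a_n$, then $c_1=\lim_n n^{5/2}(a_n-Cn^{-3/2})$, and so on determine the coefficients recursively. I expect no conceptual difficulty: the only genuine work is the remainder estimate for $R_{n,m}$ (already essentially contained in the proof of Theorem~\ref{Thm_singularite_fractional}, just carried to higher order by enlarging $m$) and the bookkeeping that identifies, for each target power $n^{-3/2-p}$, the finitely many indices $k$ feeding into it --- the latter being the main thing to get right.
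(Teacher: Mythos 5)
Your proposal is correct and follows essentially the same route as the paper's proof: Cauchy's formula on $\partial\mathbb{D}$, the Decomposition Lemma applied to $h$ at $0$, Lemma \ref{Advanced_Root_Lemma} for the odd-$k$ terms, vanishing of the even-$k$ contributions, and iterated integration by parts (Lemma \ref{Lemma_d_endadrement_Fourrier}) for the remainder. If anything you are more careful than the paper on two points it leaves implicit --- pushing the remainder to $O(n^{-(K+3)})$ so that it is genuinely absorbed into $O(n^{-3/2-(K+1)})$, and the explicit bookkeeping identifying which odd $k$ feed each coefficient $c_p$.
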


\begin{proof}
	Fix $\alpha=1/2$. We repeat the previous proof, changing what needs to be changed: 
	Let $K\in\mathbb{N}\setminus\{0\}$ be an integer. Recall the notation for $\beta\in\mathbb{R}$, $\theta\in ]0,2\pi[$: $p_\beta(\theta):=(1-e^{i\theta})^\beta$. From Cauchy's formula, the coefficients $a_n$ satisfy:
	$$a_n=\frac{1}{2i\pi} \int_{\partial \mathbb{D}} \frac{g(z)}{z^{n+1}}dz.$$
	Parametrizing the disk boundary $\partial\mathbb{D}$ by the function $\theta\mapsto e^{i\theta}$ defined on $]0,2\pi[$, we obtain:
	\begin{equation*}
		a_n=\frac{1}{2\pi}\int_0^{2\pi}g(e^{i\theta})e^{-in\theta}d\theta=\frac{1}{2\pi}\int_0^{2\pi}h(p_\alpha(\theta))e^{-in\theta}d\theta.
	\end{equation*}
	Let $m\in\mathbb{N}$ be such that $ m\alpha-K>0$ and, from the Lemma \ref{Decomposition_Lemma} with $z_0=0$, let $\tilde{h}_m$ be the holomorphic function defined on a neighbourhood of $\overline{\mathbb{D}(1,1)^\alpha}$ such that for any complex number $z$ in this neighbourhood, we have:
	\begin{gather*}
		h(z)=\sum_{k=0}^{m-1} \frac{h^{(k)}(0)}{k!}  z^k + z^m\tilde{h}_m(z)\\
 		\text{and}\\
 		\tilde{h}_m(0)=\frac{h^{(m)}(0)}{m!}.
	\end{gather*}
%	Note that when $h$ is an even function, then for any odd integer $k\in\mathbb{N}$, the we have $h^{(k)}(0)=0$.
	
	Then, as before we get
	\begin{equation}\label{formule_de_a_n_avancee}
		a_n=\sum_{k=0}^{m-1}  \frac{h^{(k)}(0)}{k!}\left(\frac{1}{2\pi}\int_0^{2\pi} p_\alpha(\theta)^k e^{-in\theta} d_\theta\right)
		+ \frac{1}{2\pi}\int_0^{2\pi} \tilde{h}_m(p_\alpha(\theta))p_\alpha(\theta)^m e^{-in\theta} d\theta.
	\end{equation}
	Note that for any integer $k\in\mathbb{N}$, $p_\alpha(\theta)^k=p_{\alpha k}(\theta)$ and that we already have estimates for the integrals $c_n(\alpha k)=\displaystyle\frac{1}{2\pi}\int_0^{2\pi} p_\alpha(\theta)^k e^{-in\theta} d\theta$ (Lemma \ref{Advanced_Root_Lemma}): For any integer $k\in\mathbb{N}$, we have:
	\begin{equation}\label{egalite_2}
		\alpha k\notin\mathbb{N},\quad c_n(\alpha k)=-\frac{\alpha k}{\Gamma(1-\alpha k)}\frac{1}{n^{\alpha k+1}}\left( 1 + \frac{e_1}{n} + \frac{e_2}{n^2} + ... + \frac{e_{K-1}}{n^{K-1}}+O \left(\frac{1}{n^{K}}\right)\right);
	\end{equation}
	\begin{equation}\label{egalite_3}
		\alpha k \in \mathbb{N},\quad c_n(\alpha k) \text{ is zero when } n>\alpha k.
	\end{equation}

Where the sequence of constants $(e_l)_{l\geq 1}$ in (\ref{egalite_2}) depends on $\alpha k$. Then
	$$ \frac{1}{2\pi}\int_0^{2\pi} \tilde{h}_m(p_\alpha(\theta))p_\alpha(\theta)^m e^{-in\theta} d_\theta = c_n\left((\tilde{h}_m\circ p_\alpha)p_{\alpha m}\right).$$
	The function $\varphi:\theta\mapsto\tilde{h}_m(p_\alpha(\theta))p_{\alpha m}(\theta)$ is $K$ times derivable and its $K$-th derivative is integrable on $]0,2\pi[$, when $\alpha m - K > -1$.
	Indeed, noting that for any $\beta\in\mathbb{R}$, $\forall\theta\in ]0,2\pi[,$ we have $p_\beta'(\theta)=\beta (-ie^{i\theta}) p_{\beta-1}(\theta)$, we obtain by deriving $K$ times the function $\varphi$ that $\varphi^{(K)}$ is a sum of functions of the form:
	\begin{equation}\label{eq-t26}
		C e^{ik\theta}\tilde{h}_m^{(l)}(p_\alpha(\theta))p_{\alpha(m+l)-l-l'}(\theta),
	\end{equation}
	with $k\leq K $ and $l+l'\leq K$. Functions of this forms are integrable over $]0,2\pi[$ as long as we have $\alpha(m+l)-l-l'>-1$. Since $l+l'\leq K$, and $\alpha\geq 0$, we have $\alpha(m+l)-l-l'\geq \alpha m - K> -1$, which is satisfied by choice of $m$.

Note also that for any $\beta > 0$, we have \begin{equation}
	\lim_{\theta\to 0} p_\beta(\theta)=\lim_{\theta\to 2\pi}p_\beta(\theta).
\end{equation} From the inequality $\alpha m - K>0$, for $l\leq 2$, the same analogous limit hold with the functions (\ref{eq-t23}). We deduce that 
$$\lim_{\theta\to 0} \varphi^{(k)}(\theta)=\lim_{\theta\to 2\pi}\varphi^{(k)}(\theta)=0,$$ for any $k\leq K$. And in particular $c_0(\varphi^{(k)})=0$ for any $k\leq K$.
	 
	 Then, iteratively applying the Lemma  \ref{Lemma_d_endadrement_Fourrier} to the $K$ first derivatives of $\varphi$, knowing that $c_0(\varphi^{(k)})=0$, for any $k\leq K$, we obtain for $n\geq 1$:
	\begin{equation}\label{formule_du_reste_de_a_n_avancee}
		\frac{1}{2\pi}\int_0^{2\pi} \tilde{h}_m(p_\alpha(\theta))p_\alpha(\theta)^m e^{-in\theta} d_\theta =O\left(\frac{1}{n^{K}}\right).
	\end{equation}
	Finally, by introducing the estimates (\ref{egalite_2}),(\ref{egalite_3}) and (\ref{formule_du_reste_de_a_n_avancee}) into the formula for $a_n$ (\ref{formule_de_a_n_avancee}) we obtain for all $n>\alpha K$:
	\begin{align*}
		a_n&= \sum_{\substack{k=1\\\alpha k \notin \mathbb{N}}}^{m-1} -\frac{h^{(k)}(0)}{k!}\frac{\alpha k}{\Gamma(1-\alpha k)}\frac{1}{n^{\alpha k+1}}
 		\left( 1 + \frac{e_1}{n} + \frac{e_2}{n^2} + ... + \frac{e_{K-1}}{n^{K-1}}+O \left(\frac{1}{n^{K}}\right) \right)\\
		&\, + O(\frac{1}{n^K}).
	\end{align*}
	Remembering that $\alpha=1/2$, we have that $\alpha k$ is an integer whenever $k$ is even. The result follows after simplification.
	%The extension when $h$ is even follows after simplification, remembering that $h^{(k)}(0)=0$ whenever $k$ is an even non-negative integer.
\end{proof}

\begin{Rem}
	Actually, if we replace the exponent $1/2$ by some real number $\alpha\in]0,1[$, in the previous tauberian theorems, that is to say, $g$ is of the form $g(z)=h((1-z)^{\alpha})$ then if $\sum_n a_n z^n$ denote the power series expansion of $g$ near $z=0$, we get with light modification of the proof, the existence of a unique sequence $(c_j)_j$ for $j\in\{ n\alpha + m: n,m\in\mathbb{N}\setminus\{0\}\}$ such that for any $K$ we have the asymptotic:
	\begin{equation}
		a_n=\sum_{\substack{j<K\\ j\,\notin\, \mathbb{N}}} \frac{c_{j}}{n^{j}} + O\left( \frac{1}{n^K}\right).
	\end{equation}
	And the first constant $c_{\alpha + 1}$ equals  $-\frac{h'(0)\alpha}{\Gamma (1-\alpha)}$.
\end{Rem}
\begin{Thm}    \label{Thm_Tauberien_pole_tout_ordre}

	Let $g$ be a continuous function on $\overline{\mathbb{D}}$ (with value in $\mathbb{C}\cup\{\infty\}$) whose restriction to $\mathbb{D}$ is holomorphic. Let $\displaystyle\sum_{n\in\mathbb{N}} b_n z^n$ be the power series expansion of $g$ in the neighbourhood of $0$.
 	Suppose there exists $h_p$ a meromorphic function defined on a neighbourhood of $\overline{\mathbb{D}(1,1)^{1/2}}\subset\mathbb{C}$ that only possesses a pole at $0$ with multiplicity $M\geq 1$, and suppose that $h_p$ satisfies
 	\begin{gather*}
 		\forall z\in\overline{\mathbb{D}},\, g(z)=h_p(\sqrt{1-z})
	\end{gather*}
	Then there exists a unique sequence of constants $(d_n)_{n\geq 1}$ and a constant $D$ such that for any positive integer $K\in\mathbb{N}\setminus\lbrace 0\rbrace$:
	\begin{equation}
		b_n=\frac{D}{n^{3/2-M}}\left(1+\frac{d_1}{n^{1}}+...+\frac{d_{K}}{n^{K}}+ O(\frac{1}{n^{K+1}})\right)
		\end{equation}
	And if $M=1$ then $D=-\dfrac{Res(h_p,0)}{\sqrt{\pi}}$. Similar formulas can be obtained for the other constants, but we won't calculate them here.
\end{Thm}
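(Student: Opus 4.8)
Rather than adapting the contour-integral argument behind Theorem~\ref{Thm_Tauberien_tout_ordre} — which, once the pole of $h_p$ has order $\ge 2$, would force one onto a Hankel-type contour to sidestep a non-integrable singularity of $g$ on $\partial\mathbb D$ — the plan is to reduce directly to that theorem by subtracting off the principal part of $h_p$ at the origin. Write $P(w)=\sum_{j=1}^{M}\rho_j\,w^{-j}$ for the principal part of $h_p$ at $0$ (so $\rho_M\neq0$) and set $h:=h_p-P$. Since the only pole of $h_p$ on the neighbourhood of $\overline{\mathbb D(1,1)^{1/2}}$ under consideration is the one at $0$, $h$ extends holomorphically to that whole neighbourhood, hence satisfies the hypotheses of Theorem~\ref{Thm_Tauberien_tout_ordre}. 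Substituting $w=\sqrt{1-z}$ gives, on $\mathbb D$,
\begin{equation*}
	g(z)=\underbrace{\sum_{j=1}^{M}\rho_j\,(1-z)^{-j/2}}_{=:\,g_{\mathrm{sing}}(z)}\;+\;\underbrace{h\bigl(\sqrt{1-z}\bigr)}_{=:\,g_{\mathrm{reg}}(z)},
\end{equation*}
with $g_{\mathrm{reg}}$ continuous on $\overline{\mathbb D}$ and the only singularity of $g_{\mathrm{sing}}$ in $\overline{\mathbb D}$ located at $z=1$. Writing $b_n=s_n+r_n$ for the Taylor coefficients at $0$ of $g_{\mathrm{sing}}$ and $g_{\mathrm{reg}}$, it then suffices to expand $s_n$ and $r_n$ separately and to add.

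For $r_n$, Theorem~\ref{Thm_Tauberien_tout_ordre} applies verbatim and yields, for every $K$, an expansion $r_n=n^{-3/2}\bigl(C_0+c_1/n+\cdots+c_K/n^K+O(n^{-K-1})\bigr)$; in particular $r_n=O(n^{-3/2})$. For $s_n$ it is enough to expand $[z^n](1-z)^{-j/2}$ for each $1\le j\le M$. When $j$ is odd, $-j/2\notin\mathbb N$ and Lemma~\ref{Advanced_Root_Lemma} with $\beta=-j/2$ provides, for every $m$,
\begin{equation*}
	[z^n](1-z)^{-j/2}=\frac{j/2}{\Gamma(1+j/2)}\,\frac{1}{n^{\,1-j/2}}\Bigl(1+\frac{e_1}{n}+\cdots+\frac{e_m}{n^{m}}+O\bigl(n^{-m-1}\bigr)\Bigr),
\end{equation*}
the $e_l$ depending only on $j$. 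When $j$ is even, $(1-z)^{-j/2}$ has coefficients $\binom{n+j/2-1}{\,j/2-1\,}$, a polynomial in $n$ of degree $j/2-1$, and hence also admits an expansion $n^{j/2-1}\bigl(\tfrac{1}{(j/2-1)!}+O(1/n)\bigr)$ in descending powers of $n$. Multiplying by $\rho_j$ and summing over $j$ gives $s_n$ as a finite combination of such expansions, whose dominant term is the one produced by the top piece $\rho_M(1-z)^{-M/2}$.

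It remains to add the expansions of $r_n$ and $s_n$ and to regroup the resulting finite family of asymptotic series into a single one ordered by decreasing powers of $n$; uniqueness of $(d_n)_{n\ge1}$ and of $D$ is then the usual uniqueness of asymptotic expansions. The leading coefficient $D$ is the one read off from Lemma~\ref{Advanced_Root_Lemma} applied to the top term $\rho_M(1-z)^{-M/2}$; in the case $M=1$ one has $g_{\mathrm{sing}}(z)=\mathrm{Res}(h_p,0)\,(1-z)^{-1/2}$, and after the simplification $\Gamma(1/2)=\sqrt\pi$ this gives the value $D=-\mathrm{Res}(h_p,0)/\sqrt\pi$ announced in the statement. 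The step I expect to require the most care is precisely this last regrouping: one must track which (half-)integer powers of $n$ are actually generated by the even-order and by the odd-order pieces of the principal part — together with the $\tfrac1n$-tails attached to each and to $r_n$ — in order to recognise the claimed shape. Everything upstream is mechanical, resting only on the Decomposition Lemma~\ref{Decomposition_Lemma}, Lemma~\ref{Advanced_Root_Lemma} and Theorem~\ref{Thm_Tauberien_tout_ordre}.
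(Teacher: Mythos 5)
Your proposal is correct and is essentially the paper's own argument: both proofs subtract the principal part of $h_p$ at $w=0$, expand each singular term $\rho_j(1-z)^{-j/2}$ via Lemma \ref{Advanced_Root_Lemma} (you additionally treat the even-$j$ terms by hand as binomial coefficients, which is if anything more careful, since those are polynomial in $n$), and apply Theorem \ref{Thm_Tauberien_tout_ordre} to the holomorphic remainder. The final regrouping you flag as the delicate step is exactly what the paper's proof elides with ``we immediately obtain the desired asymptotic expansion,'' so your write-up is no less complete than the original.
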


\begin{proof}
By assumption on $h_p$ there exists a holomorphic function $h$ in neighbourhood of $\overline{\mathbb{D}(1,1)^{1/2}}\subset\mathbb{C}$ and constants $D_M,..., D_1$ such that 
$$h_p(w)=\frac{D_M}{w^M}+\dots +\frac{D_1}{w}+h(w).$$

In particular we get
	\begin{gather*}
 		\forall z\in\overline{\mathbb{D}},\, g(z)=
 		\frac{D_M}{(\sqrt{1-z})^M}+\dots+ \frac{D_1}{\sqrt{1-z}}+h(\sqrt{1-z}).
	\end{gather*}
	
	Then applying the Lemma \ref{Advanced_Root_Lemma} to the quotients $\frac{D_j}{\sqrt{1-z}^j}$ for $1\leq j \leq M$ and using Theorem \ref{Thm_Tauberien_tout_ordre} on the function $z\mapsto h(\sqrt{1-z})$, we immediately obtain the desired asymptotic expansion. 
\end{proof}

		\section{Ramified coverings and local map singularities}
we will note $\sqrt{.}$ for the square root function: $\pi_{1/2}(.)$ defined in Definition \ref{def_power_functions}.

\begin{Lem}~

	Let $\mathcal{C}$ be a smooth analytic complex curve (i.e. a Riemann surface or a complex manifold of dimension 1). 
	Let $u:\mathbb{D}\to\mathcal{C}$ and $\lambda:\mathcal{C}\to\mathbb{C}$ be two holomorphic functions. Assume that
	\begin{enumerate}[label=\roman*)]
		\item $\lambda\circ u=id_{\mathbb{D}}$;
		\item $u$ extends by continuity into a function $\overline{\mathbb{D}}\to\mathcal{C}$;
		\item The first derivative of $\lambda$ at $p=u(1)$ is zero: $D_p\lambda=0$;
		\item The second derivative of $\lambda$ at $p$ is non-zero 
			\footnote{That is,\index{Second derivative} if $c:\mathcal{U}_p\to\mathbb{C}$ is a local map of $\mathcal{C}$ such that $c(p)=0$, then $(\lambda\circ c^{-1})'(0)=0$ and $(\lambda\circ c^{-1})''(0)\neq 0$. Since $D_p\lambda=0$, the relationship $D^2_p\lambda\neq 0$ is intrinsic.\\
			A \textit{local map}\index{Local map} in the neighbourhood of a point $p$, of a curve $\mathcal{C}$, is given by the data of an open neighbourhood $\mathcal{U}_p\subset\mathcal{C}$ of $p$, and a complex-valued function $c:\mathcal{U}_p\to\mathbb{C}$, bijective on its image and open (i.e. whose range over open sets is open). For the complex differential structure of the Riemann surface $\mathcal{C}$, these maps are biholomorphisms onto their image.}
			: $D^2_p \lambda\neq 0$.
	\end{enumerate}

	Under these conditions, there exists a holomorphic function $h$ defined on a neighbourhood of $z=0$, with values in $\mathcal{C}$, such that $h(0)=p$ and the derivative of $h$ at $0$ is non-zero: 
	$$h'(0)\in T_p\mathcal{C}\setminus\{0\},$$ 
	and there exists a neighbourhood $V_1$ of $1$ in $\mathbb{C}$ such that, for any complex number $z\in V_1\cap\mathbb{D}$, 
	$$u(z)=h(\sqrt{1-z})$$
\end{Lem}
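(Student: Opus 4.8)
The plan is to reduce the statement to the classical local normal form of a holomorphic map at a ramification point of order two and then read $h$ off that chart; morally, $u(1)=p$ is a point where $\lambda$ is two‑to‑one, and $u$ selects one of the two local branches of its inverse, which is exactly why the square root appears. \emph{Step 1: normal form for $\lambda$ near $p$.} I would first produce a local map $c:\mathcal{U}_p\to\mathbb{C}$ of $\mathcal{C}$ with $c(p)=0$ in which $\lambda$ reads $\lambda(q)=1+c(q)^2$ on $\mathcal{U}_p$ (consistent with $\lambda(p)=\lambda(u(1))=1$). Starting from any local map $c_0$ at $p$ with $c_0(p)=0$, the germ $\phi:=\lambda\circ c_0^{-1}-1$ is holomorphic near $0$, and hypotheses (iii)–(iv) say precisely that $\phi$ has a zero of order exactly two at $0$ — this is the only place the intrinsic content of $D_p^2\lambda\neq0$ under $D_p\lambda=0$ is used — so $\phi(t)=t^2 g(t)$ with $g$ holomorphic and $g(0)\neq0$. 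Being a non‑vanishing germ, $g$ has a holomorphic square root near $0$, hence $\psi(t):=t\sqrt{g(t)}$ satisfies $\psi(0)=0$, $\psi'(0)=\sqrt{g(0)}\neq0$ and $\psi^2=\phi$; thus $\psi$ is biholomorphic near $0$ and $c:=\psi\circ c_0$ works, giving $\lambda\circ c^{-1}(w)=1+w^2$.

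\emph{Step 2: localize $u$ and compare with $\sqrt{1-z}$.} Using continuity of $u$ at $1$ and $u(1)=p$, I would take $V_1$ to be a small round disk around $1$ with $u(V_1\cap\overline{\mathbb{D}})\subset\mathcal{U}_p$; since $\mathbb{D}$ and $V_1$ are convex, $V_1\cap\mathbb{D}$ is non‑empty and connected. On $\mathbb{D}$ the function $z\mapsto\sqrt{1-z}=\pi_{1/2}(1-z)$ is holomorphic and non‑vanishing because $\mathrm{Re}(1-z)>0$, and it tends to $0$ at $z=1$. On $V_1\cap\mathbb{D}$ set $w:=c\circ u$, which is holomorphic; by $\lambda\circ u=\mathrm{id}_{\mathbb{D}}$ and Step 1 one gets $w(z)^2=\lambda(u(z))-1=z-1$, so $\bigl(w(z)/\sqrt{1-z}\bigr)^2\equiv-1$ on the connected set $V_1\cap\mathbb{D}$. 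Therefore $w/\sqrt{1-z}$ is constant, equal to some $\varepsilon i$ with $\varepsilon\in\{\pm1\}$; that is, $c(u(z))=\varepsilon i\,\sqrt{1-z}$.

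\emph{Step 3: conclusion.} I would then define $h(\xi):=c^{-1}(\varepsilon i\,\xi)$ for $\xi$ in a neighbourhood of $0$ small enough that $\varepsilon i\,\xi\in c(\mathcal{U}_p)$ (which is automatic once $V_1$ is small, since $\varepsilon i\,\sqrt{1-z}=c(u(z))\in c(\mathcal{U}_p)$). Then $h$ is holomorphic near $0$, valued in $\mathcal{C}$, $h(0)=c^{-1}(0)=p$, and $h'(0)\in T_p\mathcal{C}\setminus\{0\}$ since $c^{-1}$ is a biholomorphism and $\xi\mapsto\varepsilon i\,\xi$ is a linear isomorphism; and for $z\in V_1\cap\mathbb{D}$,
\[
u(z)=c^{-1}\bigl(c(u(z))\bigr)=c^{-1}\bigl(\varepsilon i\,\sqrt{1-z}\bigr)=h(\sqrt{1-z}),
\]
which is the claim. (Alternatively the sign $\varepsilon$ can be absorbed by replacing $c$ with $-c$, again a local map at $p$ since $(-c)^2=c^2$.)

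\emph{Main obstacle.} The substantive step is Step 1: one must correctly interpret (iii)–(iv) as a double zero of $\lambda-1$ at $p$ and invoke a holomorphic square root of a unit. Everything after that is bookkeeping — choosing $V_1$ small and connected so a "locally constant implies constant" argument applies, and tracking the harmless $\pm i$ ambiguity — the only subtlety worth a line being that $\sqrt{1-z}$ stays in the domain of $h$, which follows for free from $c(u(z))=\varepsilon i\,\sqrt{1-z}$.
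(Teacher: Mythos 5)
Your proposal is correct and follows essentially the same route as the paper: both factor $\lambda-1$ in a chart at $p$ as (unit)$\times$(coordinate)$^2$, extract a holomorphic square root of the unit to build a new biholomorphic coordinate in which $\lambda$ is a quadratic normal form, use $\lambda\circ u=\mathrm{id}$ plus connectedness of $V_1\cap\mathbb{D}$ to pin down the sign, and invert that coordinate to get $h$. The only difference is cosmetic — you normalize to $1+w^2$ and absorb a factor of $i$ into $h$, whereas the paper normalizes to $1-w^2$ — so nothing further is needed.
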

\begin{proof}
	Let $c:\mathcal{U}_p\to\mathbb{C}$ be a local map in the neighbourhood of $p$, such that $c(p)=0$. To reduce our work to studying complex functions, let 
	$$\lambda_c:=\lambda\circ c^{-1} \text{ and }u_c:=c\circ u.$$ 
	They verify $\lambda_c(0)=\lambda(p)=\lambda(u(1))=1$. 
	The function $\lambda_c$ is holomorphic on a neighbourhood of $0\in\mathbb{C}$ and the function $u_c$ is holomorphic on $V\cap\mathbb{D}(0,1)$ where $V$ is a neighbourhood of $1\in\mathbb{C}$. By assumption, $\lambda_c'(0)=0$ and $\lambda_c^{''}(0)\neq 0$.
	Using the Lemma \ref{Decomposition_Lemma} for $z_0=0$ and $m=2$, there exists a holomorphic function $\tilde{h}_2$ defined in a neighbourhood of $0$ such that $\tilde{h}_2(0)=2\lambda_c^{''}(0)\neq 0$ and for any complex number $z$ in this neighbourhood,
	$$\lambda_c(z)=1-z^2 \tilde{h}_2(z).$$
	Then define $$h_c:z\mapsto z \sqrt{\tilde{h}_2(z)},$$
	which is well-defined and holomorphic in a neighbourhood $W\subset\mathbb{C}$ of $0$, so that $h_c(0)=0$. Its derivative at $0$ verifies $h_c'(0)=\sqrt{\tilde{h}_2(0)}\neq 0$. Thus $h_c$ is invertible in the neighbourhood of $0$, onto its range, with holomorphic inverse.
	If we reduce $W$, we can assume without loss of generality, that $h_c:W\to\mathbb{C}$ is a biholomorphism onto its range.
	
	Now, if $V_1$ denotes a neighbourhood of $1$ in $\mathbb{C}$ such that the image of $V_1\cap\mathbb{D}$ by $u_c$ satisfies: 
	$$u_c(V_1\cap\mathbb{D})\subseteq W,$$
	(Such a neighbourhood exists by continuity of $u_c=c\circ u$) then for any complex number $z$ in $V_1\cap\mathbb{D}$:
	$$ z=\lambda\circ u(z)=\lambda_c\circ u_c(z)= 1-(h_c(u_c(z)))^2.$$
	And so for any $z\in V_1\cap\mathbb{D}$, there exists a $\varepsilon\in\{-1,1\}$ such that:
	$$\varepsilon\sqrt{1-z}=(h_c\circ c)(u (z)).$$
 	Even if it means replacing $h_c$ by $-h_c$, and restricting $V_1$ in order to have $V_1\cap\mathbb{D}$ connected, we can assume that $\varepsilon=1$.

	Thus, for all $z\in V_1\cap\mathbb{D}$, we have
 	$$ u(z)=(h_c \circ c)^{-1}(\sqrt{1-z}).$$
	Finally, we set $h:=(h_c \circ c)^{-1}$, well-defined and holomorphic in a neighbourhood of $0$, to obtain the desired result, i.e. $u(z)= h(\sqrt{1-z})$ in a neighbourhood of $1$ and $h'(0)\neq 0$.
\end{proof}

\begin{Prop}\label{Prop_Composition_racine}~

	Under the same assumptions as the previous lemma, if we further assume that
	\begin{enumerate}[label=\roman*), start=5]
		\item $u$ extends holomorphically to the neighbourhood of any point of $\partial\mathbb{D}\setminus\{1\}$, the boundary of $\mathbb{D}$ minus $1$.
	\end{enumerate}
	Then there is a holomorphic map $h$ defined on a neighbourhood of the closure $\overline{\mathbb{D}(1,1)^{1/2}}$ in $\mathbb{C}$, with values in $\mathcal{C}$ such that $h'(0)\in T_p\mathcal{C}$ is non-zero
	%, $h$ is an even function (for any complex number of modulus small enough $h(w)=h(-w)$)
	and for any complex number $z\in\overline{\mathbb{D}}$, we have
	$$u(z)=h(\sqrt{1-z}).$$
\end{Prop}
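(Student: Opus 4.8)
The plan is to upgrade the local statement of the previous lemma to a global one by using the hypothesis (v) together with a compactness/patching argument. First I would invoke the previous lemma to get a holomorphic germ $h$ at $0$ with $h(0)=p$, $h'(0)\neq 0$, and a neighbourhood $V_1$ of $1$ in $\mathbb{C}$ such that $u(z)=h(\sqrt{1-z})$ on $V_1\cap\mathbb{D}$. The domain $\mathbb{D}(1,1)^{1/2}$ is exactly the set of square roots of points in the disc $\mathbb{D}(1,1)$, which is a region bounded by a curve through $0$; the map $\omega\mapsto\omega^2$ sends a neighbourhood of $\overline{\mathbb{D}(1,1)^{1/2}}$ onto a neighbourhood of $\overline{\mathbb{D}(1,1)}\supset\overline{\mathbb{D}}$, and it is a biholomorphism away from $\omega=0$ since $0$ is the only point of $\overline{\mathbb{D}(1,1)^{1/2}}$ mapping to a critical value consideration (the doubling only folds at $0$). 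Concretely, $w=\sqrt{1-z}$ gives a biholomorphism from a neighbourhood of $\overline{\mathbb{D}}\setminus\{1\}$ (with $\overline{\mathbb{D}}$ understood as a slit region near $1$) onto the corresponding part of $\overline{\mathbb{D}(1,1)^{1/2}}\setminus\{0\}$.

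Next I would define $h$ on $\overline{\mathbb{D}(1,1)^{1/2}}\setminus\{0\}$ by transporting $u$: for $w$ near a point $w_0\neq 0$ of that closure, set $h(w):=u(1-w^2)$, using hypothesis (v) that $u$ extends holomorphically to a neighbourhood of every point of $\partial\mathbb{D}\setminus\{1\}$ (so $1-w^2$ lands where $u$ is defined and holomorphic, even for $w$ slightly outside). I must check this is consistent with the germ of $h$ near $0$ coming from the lemma on the overlap $V_1$: there, by construction, $u(z)=h(\sqrt{1-z})$, i.e. $h(w)=u(1-w^2)$ for $w=\sqrt{1-z}$, $z\in V_1\cap\mathbb{D}$, which is exactly the same formula, so the two definitions agree on the overlap and glue into a single holomorphic function on a neighbourhood of all of $\overline{\mathbb{D}(1,1)^{1/2}}$. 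Here I would use that $\overline{\mathbb{D}(1,1)^{1/2}}$ is compact and that its only "branch" point is $0$, handled by the lemma, while all other points are handled by (v); a finite subcover argument produces a genuine open neighbourhood in $\mathbb{C}$ on which $h$ is holomorphic with values in $\mathcal{C}$.

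Then I would verify the identity $u(z)=h(\sqrt{1-z})$ for all $z\in\overline{\mathbb{D}}$: for $z$ in a neighbourhood of $1$ it is the content of the lemma; for $z\in\overline{\mathbb{D}}\setminus\{1\}$ it is the defining formula $h(\sqrt{1-z})=u(1-(\sqrt{1-z})^2)=u(z)$, valid because $(\sqrt{1-z})^2=1-z$ on $\mathbb{C}\setminus]-\infty,0]$ and $1-z$ ranges over $\overline{\mathbb{D}(1,1)}$-type values as $z$ ranges over $\overline{\mathbb{D}}$, so the principal square root is legitimate there (the slit $]-\infty,0]$ is avoided since $1-z$ has nonnegative real part issues only at $z=1$... more carefully, one restricts to the region where the principal branch is defined and then extends by continuity from $\mathbb{D}$ to $\overline{\mathbb{D}}$ using continuity of both sides). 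The non-vanishing $h'(0)\neq 0$ is inherited directly from the lemma since the germ of $h$ at $0$ is unchanged.

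The main obstacle I expect is the careful bookkeeping of domains near $w=0$, i.e. checking that the local germ from the lemma and the global formula $w\mapsto u(1-w^2)$ really do define holomorphic extensions to a full two-dimensional neighbourhood of $\overline{\mathbb{D}(1,1)^{1/2}}$ (not merely on the region itself), and that the square-root branch matches with the $\varepsilon=+1$ normalization already fixed in the lemma; once one is careful that $\omega\mapsto\omega^2$ is a local biholomorphism at every nonzero $\omega$ and that $\overline{\mathbb{D}(1,1)^{1/2}}$ meets the branch locus only at $0$, the gluing is forced and the rest is routine. I would also remark that continuity of $u$ on $\overline{\mathbb{D}}$ together with continuity of $h$ on the closure upgrades the identity from $\mathbb{D}$ to $\overline{\mathbb{D}}$ at the single remaining point $z=1$.
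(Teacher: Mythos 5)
Your proposal is correct and takes essentially the same route as the paper: apply the previous lemma to get the germ of $h$ near $w=0$, define $h(w)=u(1-w^2)$ away from $0$ using hypothesis (v) and the fact that $w\mapsto 1-w^2$ inverts $z\mapsto\sqrt{1-z}$, and glue the two definitions. The only cosmetic difference is that the paper justifies the gluing by invoking the monodromy theorem on the simply connected union of the two domains, whereas you argue agreement on the overlap directly together with a compactness argument; both handle the same bookkeeping near $w=0$.
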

\begin{proof}
	We begin by applying the previous lemma, obtaining a neighbourhood $V_1$ of $1$ in $\mathbb{C}$, a neighbourhood $W$ of $0\in\mathbb{C}$, such that the image of $V_1\cap\mathbb{D}$ by the function $z\mapsto\sqrt{1-z}$ is included in $W$; and a holomorphic map $h:W\to\mathcal{C}$ such that $h'(0)\neq 0$ and $\forall z\in V_1\cap\mathbb{D}$, $$u(z)=h(\sqrt{1-z}).$$
	Without loss of generality, we can assume that $V_1=\mathbb{D}(1,\varepsilon)$, where $\varepsilon>0$, and that $W$ is convex.

	Note that $z\mapsto\sqrt{1-z}$ is a biholomorphism of $\mathbb{D}(0,1)$ onto its range, and its inverse is given by $w\mapsto 1-w^2$. We extend $h$ by $h(w)= u (1-w^2)$ on a neighbourhood of the adherence of the set $\{\sqrt{1-z}: z \in \mathbb{D}(1,1)\setminus V_1\}$ (See figure \ref{Opens_figure}).
	In accordance with the monodromy theorem (See\cite{Rudin_FR_2009}, Theorem 16.15), this extension is possible insofar as the union of the sets $W$ and $\overline{\mathbb{D}(1,1)\setminus V_1}$ forms a simply connected set. And note that the previous map $w\mapsto u(1-w^2)$ is well-defined on some neighbourhood of $\overline{\mathbb{D}(1,1)\setminus V_1}$ thanks to the assumption \textit{v)} of the proposition.
\end{proof}
\begin{figure}[h]
    \centering
    \includegraphics[scale=0.6]{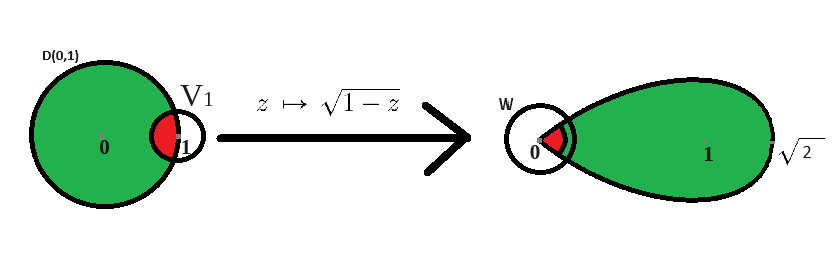}
    \caption{}
    \label{Opens_figure}
\end{figure}
\section{Asymptotics for ramified coverings of degree two}
\begin{Thm}\label{Thm_Black_Box_aperiodic}
Let $\mathcal{C}$ be a smooth analytic complex curve (i.e. a Riemann surface or a complex manifold of dimension 1) and let $R>0$ be a positive real number. 
	Let $u:\mathbb{D}(0,R)\to\mathcal{C}$ and $\lambda:\mathcal{C}\to\mathbb{C}$ be two holomorphic maps and let $g$ be a complex valued function defined and holomorphic in a neighbourhood of the closure $\overline{\{ u(z): z\in\mathbb{D}(0,R)\}}\subset\mathcal{C}$. Denote by $\sum_{n\in\mathbb{N}} a_n z^n$ the power series expansion of the complex valued function $g\circ u$. Assume that
	\begin{enumerate}[label=\roman*)]
		\item $\lambda\circ u=id_{\mathbb{D}(0,R)}$;
		\item $u$ extends by continuity into a map $\overline{\mathbb{D}}(0,R)\to\mathcal{C}$;
		\item The first derivative of $\lambda$ at $p=u(R)$ is zero: $D_p\lambda=0$;
		\item The second derivative of $\lambda$ at $p$ is non-zero over $T_{p}\mathcal{C}$: $D^2_p \lambda\neq 0$;
			\item $u$ extends holomorphically to the neighbourhood of any point on the boundary of $\mathbb{D}(0,R)$ minus $R$: $\partial\mathbb{D}(0,R)\setminus\{R\}$.
			\item The derivative of $g$ at $p$ is non-zero: $D_p g\neq0$.
	\end{enumerate}
	 Then there exists a unique non-zero constant $C$, and a unique sequence of constants $(c_l)_{l\geq 1}$, such that for any positive integer $K\in\mathbb{N}\setminus\lbrace 0\rbrace$, we have the asymptotic expansion
	\begin{equation}\label{eq-t14}
		a_n=CR^{-n}\frac{1}{n^{3/2}}\left(1+ \frac{c_1}{n^{1}}+ ... +\frac{c_{K}}{n^{K}}+ O\left(\frac{1}{n^{K+1}}\right) \right).
	\end{equation}
\end{Thm}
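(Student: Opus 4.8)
\emph{Proof idea.} The strategy is to put $u$ into the geometric square-root normal form furnished by Proposition~\ref{Prop_Composition_racine} and then invoke Theorem~\ref{Thm_Tauberien_tout_ordre}; hypothesis~\textit{vi)} enters only at the very end, to guarantee that the leading coefficient does not vanish.

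\textbf{Step 1 (Rescaling to $R=1$).} I would first set $v(z):=u(Rz)$, holomorphic on $\mathbb{D}(0,1)$ and, by \textit{ii)}, continuous on $\overline{\mathbb{D}}$, together with $\mu:=\tfrac1R\lambda$. Then $\mu\circ v=\mathrm{id}_{\mathbb{D}}$, the point $p=u(R)=v(1)$ is unchanged, $D_p\mu=\tfrac1R D_p\lambda=0$, $D_p^2\mu=\tfrac1R D_p^2\lambda\neq0$, and $v$ extends holomorphically near every point of $\partial\mathbb{D}\setminus\{1\}$; thus $(\mathcal{C},v,\mu)$ satisfies all hypotheses \textit{i)}--\textit{v)} of Proposition~\ref{Prop_Composition_racine} and of the Lemma preceding it. Writing $\sum_n b_n z^n$ for the power series of $g\circ v$ at $0$, the identity $g\circ v(z)=g\circ u(Rz)$ gives $b_n=R^n a_n$, so it suffices to produce an expansion $b_n=C\,n^{-3/2}(1+c_1/n+\cdots)$ and then read off $a_n=R^{-n}b_n$.

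\textbf{Step 2 (Square-root normal form, then compose with $g$).} Proposition~\ref{Prop_Composition_racine} applied to $(\mathcal{C},v,\mu)$ yields a holomorphic map $h$ on a neighbourhood of $\overline{\mathbb{D}(1,1)^{1/2}}$, valued in $\mathcal{C}$, with $h(0)=p$, $h'(0)\in T_p\mathcal{C}\setminus\{0\}$, and $v(z)=h(\sqrt{1-z})$ for all $z\in\overline{\mathbb{D}}$. Since $w\mapsto 1-w^2$ maps $\overline{\mathbb{D}(1,1)^{1/2}}$ onto $\overline{\mathbb{D}}$ and, by its construction, $h$ coincides there with $w\mapsto v(1-w^2)$, the compact set $h(\overline{\mathbb{D}(1,1)^{1/2}})$ lies inside $\overline{v(\mathbb{D})}=\overline{\{u(z):z\in\mathbb{D}(0,R)\}}$, on an open neighbourhood $\Omega_g$ of which $g$ is holomorphic; hence $h^{-1}(\Omega_g)$ is an open neighbourhood of $\overline{\mathbb{D}(1,1)^{1/2}}$ on which $\tilde h:=g\circ h$ is holomorphic. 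Moreover $g\circ v$ is continuous on $\overline{\mathbb{D}}$, holomorphic on $\mathbb{D}$, and $g\circ v(z)=\tilde h(\sqrt{1-z})$ on $\overline{\mathbb{D}}$, so $g\circ v$ is of the exact shape required by Theorem~\ref{Thm_Tauberien_tout_ordre}.

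\textbf{Step 3 (Tauberian estimate, non-vanishing, uniqueness).} Applying Theorem~\ref{Thm_Tauberien_tout_ordre} to $g\circ v$ with $\tilde h$ in the role of $h$ gives a sequence $(\gamma_l)_{l\geq1}$ such that for every $K\geq1$, $b_n=n^{-3/2}(C+\gamma_1/n+\cdots+\gamma_K/n^K+O(n^{-K-1}))$ with $C=-\tilde h'(0)/(2\sqrt\pi)$. By the chain rule and $h(0)=p$ one has $\tilde h'(0)=D_pg(h'(0))$; since $\dim_{\mathbb{C}}\mathcal{C}=1$, hypothesis \textit{vi)} makes $D_pg$ a non-zero linear form on the line $T_p\mathcal{C}$, and $h'(0)\neq0$, so $\tilde h'(0)\neq0$ and thus $C\neq0$. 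Setting $c_l:=\gamma_l/C$ gives $b_n=C\,n^{-3/2}(1+c_1/n+\cdots+c_K/n^K+O(n^{-K-1}))$, and multiplying by $R^{-n}$ yields (\ref{eq-t14}). Uniqueness of $C$ and of $(c_l)_{l\geq1}$ follows from the uniqueness of asymptotic expansions along the scale $(n^{-3/2-l})_{l\geq0}$: matching coefficients inductively forces any two such expansions to agree. The main obstacle in all of this is not conceptual — the analytic content is in Theorem~\ref{Thm_Tauberien_tout_ordre} and the geometric content in Proposition~\ref{Prop_Composition_racine} — but purely one of bookkeeping: keeping the factor $R^{-n}$ straight through the rescaling, and, above all, tracking domains carefully enough to know that $g\circ h$ is holomorphic on a \emph{full} neighbourhood of $\overline{\mathbb{D}(1,1)^{1/2}}$ and not merely on the portion arising from $\mathbb{D}$.
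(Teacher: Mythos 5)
Your proposal is correct and follows essentially the same route as the paper's proof: reduce to $R=1$, apply Proposition~\ref{Prop_Composition_racine} to obtain the square-root normal form $u(z)=h(\sqrt{1-z})$, and feed $g\circ h$ into Theorem~\ref{Thm_Tauberien_tout_ordre}, with hypothesis \textit{vi)} and $h'(0)\neq 0$ giving $C=-\frac{(g\circ h)'(0)}{2\sqrt{\pi}}\neq 0$ via the chain rule. You are merely more explicit than the paper about the rescaling bookkeeping and about why $g\circ h$ is holomorphic on a full neighbourhood of $\overline{\mathbb{D}(1,1)^{1/2}}$.
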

\begin{proof}
	We prove the case when $R=1$ for the general case is just a Corollary of this one.
	
	Under the above assumptions, we can apply the Proposition \ref{Prop_Composition_racine} obtaining a holomorphic map $h$ defined on a neighbourhood of the closure $\overline{\mathbb{D}(1,1)^{1/2}}\subset\mathbb{C}$, with values in $\mathcal{C}$ such that: 
	\begin{enumerate}[label=\arabic*)]
		\item $h'(0)\in T_p\mathcal{C}$ is non-zero;
		\item For any complex number $z\in\overline{\mathbb{D}}$,  we have $u(z)=h(\sqrt{1-z}).$
	\end{enumerate}
	
	Then for any complex number $z\in\mathbb{D}$, we have  $g\circ u(z)= g\circ h (\sqrt{1-z})$. Applying the theorem \ref{Thm_Tauberien_tout_ordre}, knowing that $(g\circ h)'(0)=D_p g(h'(0))\neq 0$, we obtains the desired result, that is to say:
	
	There exists a unique sequence of constants $(c_l)_{l\geq 1}$ and a unique constant $C\neq 0$ such that, for any positive integer $K\in\mathbb{N}\setminus\lbrace 0\rbrace$, the asymptotic (\ref{eq-t14}) holds. Besides $C=-\frac{(g\circ h)'(0)}{2\sqrt{\pi}}\neq 0$.
\end{proof}

\begin{Cor}\label{Cor_Black_Box_Pole_aperiodic}
Let $\mathcal{C}$ be a smooth analytic complex curve (i.e. a Riemann surface or a complex manifold of dimension 1). 
	Let $u:\mathbb{D}(0,R)\to\mathcal{C}$ and $\lambda:\mathcal{C}\to\mathbb{C}$ be two holomorphic maps and let $g$ be a meromorphic function in a neighbourhood of the closure $\overline{\{ u(z): z\in\mathbb{D}(0,R)\}}\subset\mathcal{C}$. Denote by $\sum_{n\in\mathbb{N}} b_n z^n$ the power series expansion of the complex valued function $g\circ u$. Assume that
	\begin{enumerate}[label=\roman*)]
		\item $\lambda\circ u=id_{\mathbb{D}(0,R)}$;
		\item $u$ extends by continuity into a continuous map $\overline{\mathbb{D}}(0,R)\to\mathcal{C}$;
		\item The first derivative of $\lambda$ at $p=u(R)$ is zero: $D_p\lambda=0$;
		\item The second derivative of $\lambda$ at $p$ is non-zero: $D^2_p \lambda\neq 0$;
			\item $u$ extends holomorphically to the neighbourhood of any point on the boundary of $\mathbb{D}$ minus $R$: $\partial\mathbb{D}(0,R)\setminus\{R\}$.
			\item $g$ possesses one, and only one pole at $u(R)$ in $\overline{\{ u(z): z\in\mathbb{D}(0,R)\}}$ of degree $M\geq 1$
	\end{enumerate}
	 Then there exists a unique non-zero constant $D$ and a unique sequence of constants $(d_n)_{n\geq 1}$ such that for any positive integer $K\in\mathbb{N}\setminus\lbrace 0\rbrace$, we have the asymptotic expansion
	\begin{equation}\label{eq:e5}
		b_n=\frac{DR^{-n}}{n^{3/2-M}}\left(1+ \frac{d_1}{n^{1}}+ ... +\frac{d_{K}}{n^{K}}+ O\left(\frac{1}{n^{K+1}}\right) \right).
	\end{equation}
	Besides, if $M=1$ then $D= -\frac{Res(g\circ h,0)}{\sqrt{\pi}}$.
\end{Cor}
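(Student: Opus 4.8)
The plan is to reduce the statement to the one–variable result Theorem~\ref{Thm_Tauberien_pole_tout_ordre}, in exactly the way Theorem~\ref{Thm_Black_Box_aperiodic} was deduced from Theorem~\ref{Thm_Tauberien_tout_ordre}; the only new ingredient is bookkeeping for the pole of $g$ as it is pushed through the local uniformiser. First I would dispose of the general radius $R$ by the rescaling already used in the proof of Theorem~\ref{Thm_Black_Box_aperiodic}: replacing $u$ by $z\mapsto u(Rz)$ and $\lambda$ by $R^{-1}\lambda$ produces maps satisfying hypotheses i)--vi) on $\mathbb{D}$ with the same point $p=u(R)$, and the $n$-th Taylor coefficient of $(g\circ u)(Rz)$ is $b_nR^n$; dividing the resulting expansion by $R^n$ yields the factor $R^{-n}$ in (\ref{eq:e5}). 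So assume $R=1$. Then hypotheses i)--v) are precisely those of Proposition~\ref{Prop_Composition_racine}, which supplies a holomorphic map $h$ defined on a neighbourhood of $\overline{\mathbb{D}(1,1)^{1/2}}$, with values in $\mathcal{C}$, such that $h(0)=p$, $h'(0)\in T_p\mathcal{C}\setminus\{0\}$, and $u(z)=h(\sqrt{1-z})$ for every $z\in\overline{\mathbb{D}}$; consequently $(g\circ u)(z)=(g\circ h)(\sqrt{1-z})$ on $\overline{\mathbb{D}}$.

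It then remains to verify that $h_p:=g\circ h$ is meromorphic on a neighbourhood of $\overline{\mathbb{D}(1,1)^{1/2}}$, with a single pole, located at $0$ and of multiplicity exactly $M$, so that Theorem~\ref{Thm_Tauberien_pole_tout_ordre} applies to $g\circ u$. After shrinking its domain, $h$ maps into an arbitrarily small neighbourhood of the compact set $\overline{\{u(z):z\in\mathbb{D}\}}$: indeed $h$ satisfies the identity $h(w)=u(1-w^2)$ from the construction in Proposition~\ref{Prop_Composition_racine}, and $\{1-w^2:w\in\overline{\mathbb{D}(1,1)^{1/2}}\}=\overline{\mathbb{D}}$, so the image of $\overline{\mathbb{D}(1,1)^{1/2}}$ lies in $u(\overline{\mathbb{D}})=\overline{\{u(z):z\in\mathbb{D}\}}$. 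Hence $g\circ h$ is meromorphic there, since $g$ is. Its poles lie among the $w$ with $h(w)=p$, the unique pole of $g$; but $\lambda\circ u=\mathrm{id}$ extends by continuity to $\overline{\mathbb{D}}$ and $\lambda(p)=\lambda(u(1))=1$, so $u(z)\neq p$ for all $z\in\overline{\mathbb{D}}\setminus\{1\}$ (else $z=\lambda(u(z))=1$), and therefore $h(w)=u(1-w^2)=p$ forces $1-w^2=1$, i.e. $w=0$. Finally $h'(0)\neq 0$ makes $h$ a biholomorphism near $0$, which transports the pole of $g$ at $p$ to a pole of $g\circ h$ at $0$ of the same order $M$.

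Applying Theorem~\ref{Thm_Tauberien_pole_tout_ordre} to $g\circ u$, now written as $(g\circ h)(\sqrt{1-z})$ with $h_p=g\circ h$, then yields expansion (\ref{eq:e5}) in the case $R=1$, together with $D=-\mathrm{Res}(g\circ h,0)/\sqrt{\pi}$ when $M=1$; and inspecting the proof of that theorem shows that in general $D$ is a non-zero multiple of the leading Laurent coefficient of $g\circ h$ at $0$, which is non-zero because that pole has order exactly $M$, so $D\neq 0$, as claimed. The one step that needs genuine care — as opposed to the purely holomorphic situation of Theorem~\ref{Thm_Black_Box_aperiodic}, where $g\circ h$ is automatically holomorphic near $\overline{\mathbb{D}(1,1)^{1/2}}$ — is precisely this pole accounting: checking that $0$ is the only preimage of $p$ under $h$ (which rests on $\lambda\circ u=\mathrm{id}$ together with $\lambda(p)=1$), and that $h'(0)\neq 0$ carries the multiplicity $M$ over unchanged. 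Everything else is a verbatim repetition of the proof of Theorem~\ref{Thm_Black_Box_aperiodic}, with Theorem~\ref{Thm_Tauberien_tout_ordre} replaced by Theorem~\ref{Thm_Tauberien_pole_tout_ordre}.
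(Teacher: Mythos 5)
Your proposal is correct and follows essentially the same route as the paper: reduce to $R=1$, apply Proposition~\ref{Prop_Composition_racine} to write $g\circ u(z)=(g\circ h)(\sqrt{1-z})$, check that $0$ is the only preimage of the pole $p$ under $h$ and that $h'(0)\neq 0$ preserves the pole order $M$, then invoke Theorem~\ref{Thm_Tauberien_pole_tout_ordre}. In fact you supply a justification (via $\lambda\circ u=\mathrm{id}$ on $\overline{\mathbb{D}}$ and $h(w)=u(1-w^2)$) for the step that the paper's proof only asserts, namely that $h(w)\neq u(1)$ for $w\neq 0$ on $\overline{\mathbb{D}(1,1)^{1/2}}$.
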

\begin{proof}
We only prove the case when $R=1$, for the general case $R>0$ is a Corollary of this one.

Under these assumptions, we can apply the Proposition \ref{Prop_Composition_racine} obtaining a holomorphic map $h$ defined on a neighbourhood of the closure $\overline{\mathbb{D}(1,1)^{1/2}}\subset\mathbb{C}$, with values in $\mathcal{C}$ such that: 
	\begin{enumerate}[label=\arabic*)]
		\item $h'(0)\in T_p\mathcal{C}$ is non-zero;
		\item For any complex number $z\in\overline{\mathbb{D}}$, we have $u(z)=h(\sqrt{1-z})$.
	\end{enumerate}
	
Since $h'(0)\neq 0$, there exists a neighbourhood $U_0$ of $0$ in $\mathbb{C}$, and a neighbourhood $V$ of $h(0)=u(1)$ in $\mathcal{C}$ such that $h_{|U_0}:U_0\to V$ is bijective, in particular $h^{-1}(\{u(1)\})\cap U_0=\{0\}$. Besides, for any complex number $w$ in the compact set $\overline{\mathbb{D}(1,1)}\setminus U_0$, we have $h(w)\neq u(1)$, thus, even if it means reducing the domain $\mathcal{D}_h\supseteq \overline{\mathbb{D}(1,1)}$ of definition of $h$, we can suppose that $h^{-1}(\{u(1)\})=\{0\}$.

With this last assumption, the meromorphic function $w\mapsto g\circ h (w)$ over $\mathcal{D}_h$ possesses only a pole at $z=0$, that has degree $M$. Thus applying the Theorem \ref{Thm_Tauberien_pole_tout_ordre} to $g\circ h$, we obtain the desired asymptotic (\ref{eq:e5}).
\end{proof}

	\section{Extension to \texorpdfstring{$\mathbb{Z}/d\mathbb{Z}$}{-Z/dZ}-equivariant maps}\label{Subsection_Extension_to_ZdZ_action}
			For dealing with the case of aperiodic random walks, in this part we are interested in maps $u$, defined on disks $\mathbb{D}(0,R)\subset \mathbb{C}$, with values in a Riemann surface $\mathcal{C}$, satisfying, for some automorphism $A\in Aut(\mathcal{C})$ of order $d$, the formula 
			$$\forall z\in\mathbb{D}(0,R),\, u(e^{2i\pi/d} z)=A u(z).$$

We prove here the following Corollary of theorem \ref{Thm_Black_Box_aperiodic}:

\begin{Cor} \label{Cor_Black_box_general}
Let $R>0$ be a real number, $d$ be a positive integer, $\mathcal{C}$ be a smooth analytic complex curve (i.e. a Riemann surface or a complex manifold of dimension 1) and $A\in Aut(\mathcal{C})$ of order $d$ (i.e $A,...,A^{d-1}\neq Id_\mathcal{C}$, $A^d=Id_\mathcal{C}$). Let $u:\mathbb{D}(0,R)\to\mathcal{C}$ and $\lambda:\mathcal{C}\to\mathbb{C}$ be two holomorphic maps and let $g$ be a complex valued function defined and holomorphic in a neighbourhood of the closure $\overline{\{ u(z): z\in\mathbb{D}(0,R)\}}\subset\mathcal{C}$. Denote by $\sum_{n\in\mathbb{N}} a_n z^n$ the power series expansion of the complex valued function $g\circ u$ near $z=0$. Assume that
	\begin{enumerate}[label=\roman*)]
		\item $\lambda\circ u=id_{\mathbb{D}(0,R)}$;
		\item $u$ extends by continuity into a continuous map $\overline{\mathbb{D}(0,R)}\to\mathcal{C}$;
		\item The first derivative of $\lambda$ at $p=u(R)$ is zero over $T_p \mathcal{C}$: $D_p\lambda=0$;
		\item The second derivative of $\lambda$ at $p$ is non-zero 
			: $D^2_p \lambda\neq 0$;
		\item $u$ extends holomorphically to the neighbourhood of any point on the boundary of $\mathbb{D}(0,R)$ minus the $d$-th roots of $R^d$: $$\partial\mathbb{D}(0,R)\setminus\left\lbrace R(e^{2i\pi/d})^k: k\in \{0,...,d-1\}\right\rbrace;$$
		\item The derivative of $g$ at $p$ is non-zero over $T_p\mathcal{C}$: $D_p g\neq 0$.
		\item $Fix(A^1)=...=Fix(A^{d-1})=\{u(0)\}$ and for any $ z\in\mathbb{D}(0,R)$, we have $$u(e^{2i\pi/d}z)=Au(z);$$
		\item There is an integer $r\in\{0,...,d-1\}$ such that for any $q\in \overline{\{ u(z): z\in\mathbb{D}(0,R)\}}$, $$g(Aq)=e^{2i\pi r/d}g(q).$$
	\end{enumerate}
	
	 Then for any non-negative integer $n$ such that $n\not\equiv r \, [d]$, we have $a_n=0$ and there exists a non-zero constant $C$ and a unique sequence of constants $(c_l)_{l\geq 1}$ such that for any positive integer $K\in\mathbb{N}\setminus\lbrace 0\rbrace$ we have the asymptotic expansion:
	\begin{equation}
		a_{dn+r}=CR^{-dn}\frac{1}{n^{3/2}}\left(1 + \frac{c_1}{n^{1}}+...+\frac{c_K}{n^{K}}+O\left(\frac{1}{n^{K+1}}\right)\right).
	\end{equation}

\end{Cor}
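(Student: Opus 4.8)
The strategy is to reduce the $\mathbb{Z}/d\mathbb{Z}$-equivariant situation to the already-established aperiodic Theorem~\ref{Thm_Black_Box_aperiodic} by passing to the $d$-th power variable. First I would show the vanishing claim: since $g(Aq)=e^{2i\pi r/d}g(q)$ and $u(e^{2i\pi/d}z)=Au(z)$, the function $f:=g\circ u$ satisfies $f(e^{2i\pi/d}z)=e^{2i\pi r/d}f(z)$. Substituting the power series $\sum_n a_n z^n$ and comparing coefficients forces $a_n(e^{2i\pi n/d}-e^{2i\pi r/d})=0$, hence $a_n=0$ whenever $n\not\equiv r\pmod d$. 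Writing $f(z)=z^r\,\tilde f(z^d)$ for a function $\tilde f$ holomorphic near $0$, the coefficients of interest become $a_{dn+r}=\tilde a_n$, the $n$-th Taylor coefficient of $\tilde f$, so it suffices to obtain the claimed expansion for $(\tilde a_n)_n$.

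Next I would build the data to which Theorem~\ref{Thm_Black_Box_aperiodic} applies, with $\mathbb{D}(0,R)$ replaced by $\mathbb{D}(0,R^d)$. The natural choice is to quotient by $A$: set $\mathcal{C}':=\mathcal{C}/\langle A\rangle$, which is again a Riemann surface away from the (single, by hypothesis~vii)) fixed point $u(0)$, with quotient map $\pi:\mathcal{C}\to\mathcal{C}'$; define $u':\mathbb{D}(0,R^d)\to\mathcal{C}'$ by $u'(w):=\pi(u(z))$ where $z^d=w$ — well-defined precisely because of the equivariance $u(e^{2i\pi/d}z)=Au(z)$ — and $\lambda':\mathcal{C}'\to\mathbb{C}$ by $\lambda'([q]):=\lambda(q)^d$ — well-defined if $\lambda$ is itself $A$-equivariant up to a $d$-th root of unity; this last point needs care, and I would instead work with $\lambda':\mathcal{C}'\to\mathbb{C}$ characterized near the relevant point by $\lambda'\circ u' = \mathrm{id}_{\mathbb{D}(0,R^d)}$, extending it holomorphically using assumption~i) and the local structure at $p'=\pi(p)$. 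I would then verify the six hypotheses of Theorem~\ref{Thm_Black_Box_aperiodic} at $p'=u'(R^d)$: hypothesis~i) is the defining relation; ii) and v) transfer from hypotheses~ii) and v) here since raising to the $d$-th power is a proper branched cover of disks and the removed boundary points $R e^{2i\pi k/d}$ are exactly the preimages of $R^d$; iii)–iv) follow by computing that if $\lambda\circ u=\mathrm{id}$ with $\lambda$ having a double critical point at $p$, then $\lambda'\circ u'=\mathrm{id}$ with $\lambda'$ having a double critical point at $p'$ (the $d$-th power change of variable is a local biholomorphism near $R\neq 0$, so it does not affect the order of vanishing of the derivative). Finally, for $g':\mathcal{C}'\to\mathbb{C}$ I take $g'([q])$ so that $g'\circ u' = \tilde f$, i.e. $g'$ is built from the $A$-equivariant piece of $g$ after dividing out the character; hypothesis~vi), $D_{p'}g'\neq 0$, follows from $D_p g\neq 0$ together with $R\neq 0$ so that the branching is trivial at $p$.

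Applying Theorem~\ref{Thm_Black_Box_aperiodic} to $(\mathcal{C}',u',\lambda',g')$ then yields a non-zero constant $C$ and a unique sequence $(c_l)_{l\geq 1}$ with
\begin{equation*}
	\tilde a_n = C (R^d)^{-n}\frac{1}{n^{3/2}}\left(1+\frac{c_1}{n}+\dots+\frac{c_K}{n^K}+O\!\left(\frac{1}{n^{K+1}}\right)\right),
\end{equation*}
and since $a_{dn+r}=\tilde a_n$ and $(R^d)^{-n}=R^{-dn}$ this is exactly the asserted expansion. The uniqueness of $C$ and $(c_l)_l$ is inherited from the uniqueness in Theorem~\ref{Thm_Black_Box_aperiodic}.

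The main obstacle is the second step: making the quotient construction rigorous at the orbifold point $u(0)$ and, more importantly, verifying that $\lambda$ descends (or can be replaced by a descending function) to a $\lambda'$ on $\mathcal{C}'$ that still satisfies $\lambda'\circ u'=\mathrm{id}_{\mathbb{D}(0,R^d)}$ and has exactly a simple critical point at $p'$. One must check that $u'$ and the removed-points condition interact correctly — in particular that assumption~v) (holomorphic extension of $u$ across all boundary points except the $d$-th roots of $R^d$) is exactly what is needed for $u'$ to extend across $\partial\mathbb{D}(0,R^d)\setminus\{R^d\}$ — and that the single-fixed-point hypothesis~vii) guarantees $\mathcal{C}'$ is genuinely a manifold near every point of $\overline{u'(\mathbb{D}(0,R^d))}$ other than possibly $u'(0)$, which is harmless since $g'\circ u'$ is holomorphic there anyway. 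Everything else is a routine transfer of hypotheses through the $d$-th power substitution.
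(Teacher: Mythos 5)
Your proposal follows essentially the same route as the paper: reduce to the $A$-invariant (i.e. $r=0$) case via the functional equation on the Taylor coefficients, pass to the quotient Riemann surface $\mathcal{C}/\langle A\rangle$ with its degree-$d$ ramified covering, lower $u$ and $\lambda$ through the quotient to data on $\mathbb{D}(0,R^d)$, and invoke the aperiodic Theorem~\ref{Thm_Black_Box_aperiodic} (equivalently, Proposition~\ref{Prop_Composition_racine} followed by Theorem~\ref{Thm_Tauberien_tout_ordre}) for the descended maps. The points you flag as delicate — the descent of $\lambda$ and the orbifold point at $u(0)$ — are exactly the ones the paper handles via its lowering/lifting proposition and its two claims, so no essential idea is missing.
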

\begin{Rem}
        Actually, we only need the curve $\mathcal{C}$ to be smooth in a neighbourhood of the closure $\overline{\{ u(z): z\in\mathbb{D}(0,R)\}}\subset\mathcal{C}$.
\end{Rem}
\begin{Rem}\label{Rem_Cor_Black_box_general}
    Through the computation it can be shown that the constant $C$ in the above asymptotic expansion equals $C=-\frac{D_p g (h'(0))}{2\sqrt{\pi}}$, where $h$ is some holomorphic map in a neighbourhood of $\overline{\sqrt{\mathbb{D}(1,1)}}$ with non-zero derivative in $T_p\mathcal{C}$, that does not depend on $g$.
\end{Rem}

We prove this Corollary in several steps. We start by reducing the problem to the case where the function $g$ is $A$-invariant, i.e such that $r=0$ in assumption $viii)$. Secondly we briefly introduce some results on lifting and lowering maps through the canonical surjection $\Pi:\mathcal{C}\to\mathcal{C}_A$, where $\mathcal{C}_A$ is the set of $A$-orbits of elements in $\mathcal{C}$ (see notation \ref{Notation-K1} below), in order to apply the Proposition \ref{Prop_Composition_racine} with the Riemann surface $\mathcal{C}_A$ and the maps $\lambda_A, u_A$ which are lowering of the maps $\lambda$ and $u$, respectively, through $\Pi$. Lastly, we use theorem \ref{Thm_Black_Box_aperiodic} to conclude.

\begin{Notation}\label{Notation-K1}
	All along this section, we will work under the hypotheses of the Corollary \ref{Cor_Black_box_general}.
	Since the action of $A$ generates a finite group of automorphisms of $\mathcal{C}$, the set of its orbits $\mathcal{C}_A$ can be given a Riemann surface structure (see Proposition \ref{Prop_R_C_d} below) making $\Pi$ a submersion. Actually the map $\Pi$ is ramified at the point $u(0)$ fixed by $A$.
	
	We will denote by $o$ the point $u(0)$ in $\mathcal{C}$ and we will write $o_A=\Pi(o)$ in $\mathcal{C}_A$, for its image by $\Pi$ in $\mathcal{C}_A$. 
\end{Notation}

\subsection{Reduction to the \texorpdfstring{$A$}{-A}-invariant case}\label{subsection_reduction_A_inv}

We work under the assumptions of the Corollary \ref{Cor_Black_box_general}. By hypotheses, the map $z\mapsto g\circ u(z)$ satisfies for some $r\in \{0,...,d-1\}$:
\begin{equation}
	\forall z\in\mathbb{D}(0,R), g\circ u(e^{2i\pi/d} z)= (e^{2i\pi/d})^r g\circ u(z)
\end{equation}
\begin{Lem}\label{Multiplicity_of_the_zeros}
		Let $f:\mathbb{D}\to\mathbb{C}$ be a holomorphic function, and let $0\leq r < d$ be an integer. Denote by $\sum_{n\geq 0} f_n z^n$ its power series expansion in the neighbourhood of $z=0$. If for any complex number $z\in\mathbb{D}$, $$f(e^{2i\pi/d} z)=(e^{2i\pi/d})^r f(z),$$
		then for any non-negative integer $n\geq 0$, we have that $f_n=0$ when $n\neq r\, [d]$.
	\end{Lem}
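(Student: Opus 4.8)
The plan is to exploit the functional equation $f(e^{2i\pi/d}z) = e^{2i\pi r/d}f(z)$ directly at the level of Taylor coefficients. First I would substitute the power series $f(z) = \sum_{n\geq 0} f_n z^n$ into both sides of the identity. On the left-hand side we get $f(e^{2i\pi/d}z) = \sum_{n\geq 0} f_n (e^{2i\pi/d})^n z^n = \sum_{n\geq 0} f_n e^{2i\pi n/d} z^n$, while the right-hand side is $e^{2i\pi r/d}\sum_{n\geq 0} f_n z^n = \sum_{n\geq 0} e^{2i\pi r/d} f_n z^n$.

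Next, by uniqueness of the power series expansion of a holomorphic function on $\mathbb{D}$ (both series converge on $\mathbb{D}$ since $f$ is holomorphic there and $e^{2i\pi/d}z$ stays in $\mathbb{D}$ when $z$ does), I would identify coefficients term by term: for every $n\geq 0$,
\begin{equation*}
	f_n e^{2i\pi n/d} = e^{2i\pi r/d} f_n,
\end{equation*}
equivalently $f_n\bigl(e^{2i\pi n/d} - e^{2i\pi r/d}\bigr) = 0$. The factor $e^{2i\pi n/d} - e^{2i\pi r/d}$ vanishes precisely when $n \equiv r \pmod d$, since two $d$-th roots of unity $e^{2i\pi n/d}$ and $e^{2i\pi r/d}$ coincide if and only if $n \equiv r \pmod d$. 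Hence whenever $n \not\equiv r \pmod d$ the factor is nonzero and we may divide by it, forcing $f_n = 0$. This is exactly the claimed conclusion.

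There is essentially no obstacle here: the only point requiring a word of care is the legitimacy of identifying coefficients, which rests on the fact that a holomorphic function on $\mathbb{D}$ has a unique convergent power series at $0$ and that the substitution $z\mapsto e^{2i\pi/d}z$ preserves $\mathbb{D}$ so the composed function is again holomorphic on $\mathbb{D}$ with the stated series. Everything else is the elementary observation about when two $d$-th roots of unity agree. I would therefore present the proof as the two-line computation above followed by this remark on coefficient identification.
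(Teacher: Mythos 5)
Your proof is correct and follows essentially the same route as the paper's: substitute the power series into the functional equation, identify coefficients to get $f_n e^{2i\pi n/d}=e^{2i\pi r/d}f_n$, and conclude $f_n=0$ whenever $n\not\equiv r\ [d]$. Your added remark on the uniqueness of the power series expansion is a harmless elaboration of a step the paper leaves implicit.
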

	\begin{proof}
    	Let $\zeta_d=e^{2i\pi/d}$. For any complex number $z\in\mathbb{D}$, we have
	$$f(\zeta_d z ) =\sum_{n\geq 0} f_n \zeta_d^n z^n = \sum_{n\geq 0} f_n \zeta_d^{r} z^n=\zeta_d^r f(z).$$
	Thus for any non-negative integer $n\geq 0$, we have
	$$ f_n\zeta_d^n = f_n \zeta_d^r.$$
	thus for any non-negative integer $n$, if $n\neq r \, [d]$, necessarily $f_n=0$.
	\end{proof}

We apply the previous Lemma \ref{Multiplicity_of_the_zeros}, to $f=g\circ u$, obtaining that $z=0$ is a zero of $g\circ u$ of multiplicity at least $r$. Thus the function $h=\frac{g}{\lambda^r}$ is a meromorphic function on $\mathcal{C}$ that is holomorphic on the closure $\overline{\{u(z): z \in \mathbb{D}(0,R)\}}\subset \mathcal{C}$ (for $ h\circ u(z)= \frac{g\circ u (z)}{z^r}$ and $u$ parametrizes $\mathcal{C}$ in a neighbourhood of $o$). 

\begin{Rem}\label{Rem_Reduction}
	This map $ h=\frac{g}{\lambda^r}$ is $A$-invariant and the power series expansion of $h\circ u$ in the neighbourhood of $z=0$ is $\sum_n a_{n-r} z^n $, where $\sum_n a_n z^n$ is the power series expansion of $g\circ u$.
\end{Rem}

From this last remark, we deduce that we just need to prove the Corollary for $g$ being an $A$-invariant map, that is to say when $r=0$ in assumption $viii)$ of the corollary.

\subsection{Lifting and Lowering maps through a ramified covering}
It is somehow well known that the following sujection $\Pi$ between Riemann surfaces is a ramified covering of degree $d$ at $o_A \in\mathcal{C}_A$:
\begin{Prop}[see for example \cite{Farkas-Kra_1992}\textit{ III.7.7 and III.7.8}]\label{Prop_R_C_d}~
	
		There exists local coordinates of $\mathcal{C}$ in a neighbourhood of $o$ and local coordinates of $\mathcal{C}_A$ in a neighbourhood of $o_A$, in which the submersion between Riemann surfaces: 
		$$\Pi:\mathcal{C}\to\mathcal{C}_A$$
is $z\mapsto z^d$.

	We say that $\Pi$ is a \textit{ramified covering} with a branching point of degree $d$ at $o_A=\Pi(o)$.
	\end{Prop}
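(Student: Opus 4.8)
The plan is to reduce everything to a local normal form for the action of the cyclic group $\langle A\rangle$ near its fixed point $o$, and then read off the quotient by elementary means. The one nontrivial input is the holomorphic linearisation of a finite-order automorphism at a fixed point.

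First I would linearise $A$ near $o$. Choose any holomorphic chart $\zeta:\mathcal{U}\to\mathbb{C}$ of $\mathcal{C}$ with $\zeta(o)=0$; replacing $\mathcal{U}$ by $\bigcap_{k=0}^{d-1}A^k(\mathcal{U})$, which is still a neighbourhood of $o$ since $o$ is fixed, we may assume $\mathcal{U}$ is $A$-invariant, so that $B:=\zeta\circ A\circ\zeta^{-1}$ and all its iterates $B^{\circ k}$ are defined on $\zeta(\mathcal{U})$. Put $\mu:=B'(0)\ne 0$. The averaging trick (Cartan's linearisation lemma) then applies: setting $\Psi:=\frac1d\sum_{k=0}^{d-1}\mu^{-k}B^{\circ k}$ one has $\Psi(0)=0$ and $\Psi'(0)=1$, so $\Psi$ is a biholomorphism near $0$, and a reindexing together with $B^{\circ d}=\mathrm{id}$ gives $\Psi\circ B=\mu\,\Psi$. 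Hence in the new chart $z:=\Psi\circ\zeta$ the automorphism $A$ is exactly $z\mapsto\mu z$. Next, $\mu$ is a primitive $d$-th root of unity: from $B^{\circ d}=\mathrm{id}$ we get $\mu^d=1$, and if $\mu^k=1$ for some $1\le k\le d-1$ then in the chart $z$ the map $A^k$ would be the identity on a whole neighbourhood of $o$, contradicting $Fix(A^k)=\{o\}$ (hypothesis vii). So, after rescaling $z$ to a disk $\mathbb{D}(0,\varepsilon)$, the group $\langle A\rangle\cong\mathbb{Z}/d\mathbb{Z}$ acts on $\mathbb{D}(0,\varepsilon)$ by the rotations $z\mapsto\mu^k z$, and its orbit space is $\mathbb{D}(0,\varepsilon)/(z\sim\mu z)$.

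Then I would build the coordinate on $\mathcal{C}_A$. The map $\mathbb{D}(0,\varepsilon)\to\mathbb{D}(0,\varepsilon^d)$, $z\mapsto z^d$, is constant on $\langle A\rangle$-orbits, is $d$-to-$1$ off the origin, and induces a bijection of the orbit space onto $\mathbb{D}(0,\varepsilon^d)$; composing with $\Pi$ yields a candidate chart $c_A$ of $\mathcal{C}_A$ near $o_A$ characterised by $c_A\circ\Pi=z^d$ on $\mathbb{D}(0,\varepsilon)$. One checks that $c_A$ is a homeomorphism onto $\mathbb{D}(0,\varepsilon^d)$ and that it is compatible with the Riemann surface structure of $\mathcal{C}_A$ at the regular points: near $\Pi(x_0)$ with $x_0\in\mathbb{D}(0,\varepsilon)\setminus\{0\}$ a regular chart of $\mathcal{C}_A$ is $z\circ\sigma$ for a local holomorphic section $\sigma$ of $\Pi$ (independent of the chosen preimage up to the linear map $z\mapsto\mu^k z$), and the transition to $c_A$ is $t\mapsto t^d$, which is a biholomorphism on the punctured disk since $\tfrac{d}{dt}t^d=d\,t^{d-1}$ does not vanish there. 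Therefore $c_A$ is a holomorphic chart, and in the charts $z$ on $\mathcal{C}$ near $o$ and $w:=c_A$ on $\mathcal{C}_A$ near $o_A$ the map $\Pi$ is literally $z\mapsto z^d$. The branching statement is then immediate: $\tfrac{d}{dz}z^d$ vanishes to order $d-1$ exactly at $z=0$, so $\Pi$ is a $d$-sheeted unramified covering over a punctured neighbourhood of $o_A$ (in particular a local biholomorphism, hence a submersion, away from $o$) and ramifies with branching number $d$ at $o_A$.

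I expect the main obstacle to be the linearisation step: verifying that the averaged map $\Psi$ is well defined, invertible, and conjugates $B$ to its linear part, together with the mild bookkeeping in the last step showing that the descended chart $c_A$ is well defined and agrees with the Riemann surface structure on $\mathcal{C}_A$ away from $o_A$. Once the normal form $z\mapsto z^d$ is in hand, everything else is elementary.
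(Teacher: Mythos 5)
Your proof is correct; note that the paper itself gives no proof of this proposition, simply citing Farkas--Kra III.7.7--7.8, and your argument (Cartan's averaging lemma to linearise the finite-order automorphism $A$ at its fixed point as $z\mapsto\mu z$ with $\mu$ a primitive $d$-th root of unity, followed by the descended chart $c_A$ with $c_A\circ\Pi=z^d$ and the compatibility check at regular points) is precisely the standard proof found in that reference. So you have supplied, correctly, the proof the paper outsources.
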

	
We are interested in necessary and sufficient condition to lift (resp. lower) maps through $\Pi$, i.e given a map $\phi:\mathcal{C}\to\mathbb{D}$ and a map $\mu:\mathbb{D}\to\mathcal{C}$, does there exist maps $\phi_A: \mathcal{C}_A\to\mathbb{D}$ and $\mu_A:\mathbb{D}\to\mathcal{C}_A$ such that
\begin{equation}\label{Relation_LiftLower}
	\pi_d\circ\phi=\phi_A\circ \Pi \quad \quad \text{and} \quad \quad \Pi\circ \mu = \mu_A\circ\pi_d,
\end{equation}
where $\pi_d:z\mapsto z^d$. (resp. given $\phi_A$ and $\mu_A$ does there exist $\phi$ and $\mu$ such that (\ref{Relation_LiftLower}) holds).
The answer is given in the propositions \ref{Prop_Lowering} below:

\begin{Prop}[Lowering and Lifting maps]
\label{Prop_Lowering}
\label{Prop_Lifting}~

	\og Lowering map\fg : Let $\phi:\mathcal{C}\to\mathbb{D}$, respectively $\mu:\mathbb{D}\to\mathcal{C}$ be holomorphic maps that preserves the orbits, i.e such that
	$$\exists l\in\mathbb{N},\, \forall p \in \mathcal{C},\, \phi(A p)= (e^{2i\pi/d})^l \phi(p),$$
respectively such that
	$$\exists l\in\mathbb{N},\, \forall z \in \mathbb{D},\, \mu(e^{2i\pi/d}z)=A^l \mu(z).$$
Then there exists lowering of these maps, i.e there exists holomorphic maps $\phi_A:\mathcal{C}_A\to \mathbb{D}$, respectively $\mu_A:\mathbb{D}\to\mathcal{C}_A$ such that the following diagrams commute:
\begin{equation} \label{commutating_diagram_1}
\begin{tikzcd}
\mathcal{C} \arrow[r, "\Pi", two heads] \arrow[d, "\phi"'] & \mathcal{C}_A \arrow[d, "\phi_A"] \\
\mathbb{D} \arrow[r, "\pi_d", two heads]               & \mathbb{D}                                                                  
\end{tikzcd}
	\quad \text{, respectively  }\quad
	\begin{tikzcd}
		\mathcal{C} \arrow[r, "\Pi", two heads]                             		& \mathcal{C}_A \\
		\mathbb{D} \arrow[r, "\pi_d", two heads] \arrow[u, "\mu"] 
		& \mathbb{D}; \arrow[u, "\mu_A"'] 
	\end{tikzcd}
\end{equation}

\og Lifting map\fg : Reciprocally let $\phi_A:\mathcal{C}_A\to\mathbb{D}$, respectively $\mu_A:\mathbb{D}\to\mathcal{C}_A$ be holomorphic maps, that are not constant equal to $0$, resp $o_A$.
	
	Let $p\in\mathcal{C}$, $\tau\in\mathbb{D}\setminus\{0\}$ such that $\phi_A(\Pi(p))=\pi_d(\tau)$, then we have the equivalence between the two assertions: 
	\begin{enumerate}[label=\roman*)]
		\item There exists a unique holomorphic map $\phi:\mathcal{C}\to \mathbb{D}$ such that 
	$$ \pi_d\circ \phi= \phi_A\circ \Pi \quad \text{and}\quad \phi(p)=\tau;$$
		\item Every zero of $\phi_A\circ \Pi$ has multiplicity a multiple of $d$. 
	\end{enumerate}
	Respectively, let $\tau\in\mathbb{D}$ and $p\in\mathcal{C}\setminus \{o\}$ such that $\mu_A(\pi_d(\tau))=\Pi(p)$ then we have the equivalence between the two assertions: 
	\begin{enumerate}[label=\roman*)]
			\item There exists a unique holomorphic map $\mu:\mathbb{D}\to \mathcal{C}$ such that 
	$$ \Pi\circ \mu= \mu_A\circ \pi_d \quad \text{and}\quad \mu(\tau)=p;$$
		\item Every zero of $\mu_A\circ \pi_d$ have multiplicity a multiple of $d$.
	\end{enumerate}	
\end{Prop}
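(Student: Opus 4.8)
The statement is an equivalence, in each of the two dual cases, between existence of a lift and a divisibility condition on the multiplicities of the zeros of the relevant composite map. The natural tool is the local normal form of Proposition \ref{Prop_R_C_d}: in suitable coordinates $\Pi$ is just $z\mapsto z^d$, and $\pi_d$ is $z\mapsto z^d$ by definition, so near the ramification point $o$ (resp. $o_A$) both vertical maps look like the $d$-th power map. The plan is to check the equivalence locally near each point — away from $o_A$ the map $\Pi$ is an unramified covering, so lifting is governed by the usual monodromy/path-lifting criterion, while the only obstruction can occur over the branch point — and then to glue the local lifts into a global holomorphic map using connectedness and the uniqueness clause.

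First I would treat the ``lowering'' half, which is the easy direction: given $\phi:\mathcal{C}\to\mathbb{D}$ with $\phi(Ap)=\zeta_d^l\phi(p)$, the function $\phi$ is constant on each $A$-orbit up to the factor $\zeta_d^l$, hence $\pi_d\circ\phi=\phi^d$ is genuinely $A$-invariant and descends to a well-defined function $\phi_A$ on $\mathcal{C}_A=\mathcal{C}/A$; holomorphy of $\phi_A$ follows because $\Pi$ is a submersion away from $o$ (so locally one can divide by the submersion) and at $o_A$ one uses the $z\mapsto z^d$ normal form together with the Riemann removable singularity theorem, since $\phi_A$ is bounded (values in $\mathbb{D}$) and continuous. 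The statement for $\mu$ is dual and handled the same way: $\Pi\circ\mu$ satisfies $(\Pi\circ\mu)(\zeta_d z)=\Pi(A^l\mu(z))=(\Pi\circ\mu)(z)$, so it factors through $\pi_d$ to give $\mu_A$, again holomorphic by the normal form.

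For the ``lifting'' half I would argue as follows. For the implication $i)\Rightarrow ii)$: if $\phi$ exists with $\pi_d\circ\phi=\phi_A\circ\Pi$, then at any point $q$ the local order of vanishing of $\phi_A\circ\Pi$ equals $d$ times that of $\phi$ (because $\pi_d$ multiplies orders by $d$), hence is a multiple of $d$. For $ii)\Rightarrow i)$: over $\mathcal{C}\setminus\{o\}$, which is an unramified $d$-sheeted cover of $\mathcal{C}_A\setminus\{o_A\}$, the hypothesis that every zero of $\phi_A\circ\Pi$ has order divisible by $d$ means the holomorphic function $\phi_A\circ\Pi$ locally admits a holomorphic $d$-th root at every point; patching these $d$-th roots is exactly a monodromy question, and the choice $\phi(p)=\tau$ (with $\tau\ne 0$) pins down the branch on a connected neighbourhood, after which the monodromy theorem / simple-connectedness-free argument via analytic continuation along paths in $\mathcal{C}$ propagates it globally — the candidate $\phi$ so obtained is single-valued because $\Pi$ itself realizes the deck transformations and the relation $\pi_d\circ\phi=\phi_A\circ\Pi$ is invariant under $\phi\mapsto\zeta_d\phi$, while the normalisation fixes the ambiguity. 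Finally one checks $\phi$ extends holomorphically across $o$: near $o$ use coordinates where $\Pi$ is $w\mapsto w^d$, so $\phi_A\circ\Pi$ in these coordinates is $w\mapsto F(w^d)$ for some holomorphic $F$ with $\mathrm{ord}_0 F$ divisible by... one must be slightly careful here, as the ramification contributes an extra factor $d$ to the order, which is precisely why condition $ii)$ is stated for $\phi_A\circ\Pi$ and not for $\phi_A$; granting this, $\phi$ has a removable singularity at $o$ and extends, with values still in $\overline{\mathbb{D}}$ hence (by the open mapping theorem, unless constant) in $\mathbb{D}$. Uniqueness is immediate from the identity principle once the value at one point is prescribed. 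The dual statement for $\mu_A$, $\mu$ is proved identically, exchanging the roles of source and target and replacing ``$d$-th root'' by ``lift through the $d$-to-$1$ map $\Pi$''.

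The main obstacle I expect is the bookkeeping at the ramification point: one has to keep straight that the order of vanishing of $\phi_A\circ\Pi$ at a point over $o_A$ already carries the ramification factor, and reconcile this with the divisibility-by-$d$ hypothesis so that the lifted function is genuinely holomorphic (and not merely meromorphic or multivalued) at $o$. Everything else — the monodromy gluing away from the branch point, the descent in the lowering direction, and uniqueness — is standard Riemann-surface theory.
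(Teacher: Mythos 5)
Your proposal follows essentially the same route as the paper: for the lowering half, descend the $A$-invariant composite through $\Pi$ using local sections and remove the singularity at $o_A$; for the lifting half, apply the covering-space lifting criterion away from the branch locus, with condition $ii)$ supplying the local form $h(\cdot)^d$ at the zeros, and finish with Riemann's removable-singularity theorem. The only cosmetic difference is that you detail the $\phi$ case as a $d$-th-root extraction while the paper details the $\mu$ case via the topological lifting theorem, each treating the other case as analogous.
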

\begin{proof}
\og Lowering map\fg : We prove the result for $\phi$, the proof for $\mu$ being analogous.
$\Pi$ admits locals section on $\mathcal{C}_A \setminus \{o_A\}$. More precisely, for any point $p\in \mathcal{C}\setminus\{o\}$, there exists an open neighbourhood $U_{\Pi(p)}\subset \mathcal{C}_A$ of $\Pi(p)$, and a holomorphic map $\sigma:U_{\Pi(p)}\to \mathcal{C}$ such that 
	$$\Pi\circ\sigma = Id_{U_{\Pi(p)}} \text{   and    } \sigma(\Pi(p))=p .$$

	We then define naturally $\phi_A$ to obtain the commutative diagram (\ref{commutating_diagram_1}):
	$$\forall p_A\in\mathcal{C}_A,\, \phi_A(\omega):=\pi_d\circ \phi\circ \sigma (\omega),$$
	where $\sigma$ is a local section of $\Pi$. By $A$-invariance of $\pi_d\circ \phi $, $\phi_A$ is well-defined - that is to say $\phi_A$ does not depend on the choice of the section $\sigma$ -. $\phi_A$ is also holomorphic on $\mathcal{C}_A\setminus \{o_A\}$ and bounded in a pointed neighbourhood of $o_A$. In particular, $o_A$ is a removable singularity of $\phi_A$. It follows that $\phi_A$ extends to a holomorphic map on $\mathcal{C}_A$ that satisfies $$\pi_d\circ \phi = \phi_A\circ \Pi.$$
	We still denote by $\phi_A$ this extension. That's the map $\phi_A$ we desired.
	\vspace{\baselineskip}
	
\og Lifting map\fg : Remark that $i)$ always implies $ii)$, thus we just need to show that $ii)$ implies $i)$. We prove the result for $\mu_A$, the proof for $\phi_A$ being analogous. 
	
	Note that since $\mu_A$ is a holomorphic map that is not constant equal to $o_A$, the set $Z(\mu_A)$ of points where $\mu_A$ equals $o_A$, is discrete in $\mathbb{D}$.

The existence and uniqueness of a map $\mu$, lifting $\mu_A$, comes from the lifting of $\mu_A\circ\pi_d$ on $\mathbb{D}$ minus $\pi_d^{-1}(Z(\mu_A))$, and the prescribed value of $\mu$ at the point $\tau$ (see diagram below and Theorem \ref{Lifting_Thm} below). Remark that since $\pi_d$ is proper, $\pi_d^{-1}(Z(\mu_A))$ is a discrete in $\mathbb{D}$.
% https://tikzcd.yichuanshen.de/#N4Igdg9gJgpgziAXAbVABwnAlgFyxMJZABgBoAmAXVJADcBDAGwFcYkQAdDgW3pwAsARoOAARAL5c4MHNyxhmcAARc0WAPpQAesAC0ARnEAKAFpGA5gEpLIcaXSZc+QinIVqdJq3ZdGMAGY4Rly8AsLAJuIA9FAhfEIikVwATljm-DiWXIL0AMYA1nCM9HD8cWEiElIycgpwXMAklFzitvYgGNh4BERuxB4MLGyInDzx4RK2HjBQ5vBEoP7JENxIZCA4EEj6NIPeI9n0ycDmkhy5WMm5Khxqmm2Ly6uIbhtbiDsg-DD0UOw4AHcIN9fghdl5hqMAApYB4gJYrNY0TZIV57SHmKbiIA
$$
\begin{tikzcd}
                                                                                        &  & \mathcal{C}\setminus\{o\} \arrow[dd, "\Pi", two heads]                                   \\
                                                                                        &  &                                                                           \\
\mathbb{D}\setminus \pi_d^{-1}(Z(\mu_A)) \arrow[rr, "\mu_A\circ \pi_d"] \arrow[rruu, "\mu"] &  & {\mathcal{C}_A\setminus\{o_A\}}
\end{tikzcd}
$$

Insofar as $\mathbb{D}\setminus \pi_d^{-1}(Z(g))$ is connected and since, by $ii)$, at every points $\omega$ in the set $\pi_d^{-1}(Z(\mu_A))$, the $\mathcal{C}_A$-valued map $\mu_A\circ\pi_d$ is, in local coordinates, of the form $z\mapsto h_\omega(z)^d$, where $h_\omega$ is a holomorphic function and is zero at $\omega$, the previous lifting is possible according to the theorem \ref{Lifting_Thm}.
 We then extend holomorphically, and in a unique way, $g$ to $\mathbb{D}$ by using the Riemann extension theorem for isolated removable singularities. 
	\end{proof}	
\begin{Rem}
In this proof we used the existence of local section for quotient map induced by a properly discontinuous action (See \cite{Lee_2013} Theorem 21.10), the Riemann extension theorem for isolated removable singularities (See \cite{Rudin_FR_2009} Theorem 10.20) and the Lifting Theorem:

\begin{Thm}[\href{https://analysis-situs.math.cnrs.fr/Relevement-des-applications.html}{Lifting Theorem} (See \cite{Analysis-situs-LiftingTheorem})] \label{Lifting_Thm}
	Let $\Pi:(Y,y_0)\to(X,x_0)$ be a covering on pointed space. Let $(Z,z_0)$ be a connected, locally path connected, pointed space and let $\bar{f}:(Z,z_0) \to (X,x_0)$ an map. Then $\bar{f}$ admits a lifting $f:(Z,z_0)\to(Y,y_0)$ if, and only if,
	$$f_*\left[ \pi_1(Z, z_0)\right] \subset p_*\left[\pi_1(Y, y_0)\right].$$
	\end{Thm}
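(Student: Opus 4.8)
The plan is to reproduce the standard construction from covering-space theory; I only sketch it, writing $\Pi_*$ and $\bar f_*$ for the homomorphisms induced on fundamental groups. Note first that a connected, locally path connected space is path connected, so every point of $Z$ can be joined to $z_0$ by a path in $Z$. The implication ``a lift exists $\Rightarrow$ the $\pi_1$-inclusion holds'' is immediate: if $f\colon(Z,z_0)\to(Y,y_0)$ satisfies $\Pi\circ f=\bar f$, then by functoriality $\bar f_*=\Pi_*\circ f_*$, hence $\bar f_*\bigl[\pi_1(Z,z_0)\bigr]=\Pi_*\bigl(f_*\bigl[\pi_1(Z,z_0)\bigr]\bigr)\subseteq\Pi_*\bigl[\pi_1(Y,y_0)\bigr]$.

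For the converse, assume $\bar f_*\bigl[\pi_1(Z,z_0)\bigr]\subseteq\Pi_*\bigl[\pi_1(Y,y_0)\bigr]$ and build $f$ as follows. For $z\in Z$ choose a path $\gamma$ from $z_0$ to $z$; then $\bar f\circ\gamma$ is a path in $X$ issuing from $x_0$, and by the unique path-lifting property of the covering $\Pi$ it has a unique lift starting at $y_0$; define $f(z)$ to be the endpoint of that lift. To check this is independent of $\gamma$: if $\gamma'$ is another such path, then $\gamma\cdot(\gamma')^{-1}$ is a loop at $z_0$, so the class of $\bar f\circ\bigl(\gamma\cdot(\gamma')^{-1}\bigr)$ lies in $\bar f_*[\pi_1(Z,z_0)]$, hence by hypothesis equals $\Pi_*[\delta]$ for some loop $\delta$ at $y_0$ in $Y$; by the homotopy lifting property the lift of $\bar f\circ\bigl(\gamma\cdot(\gamma')^{-1}\bigr)$ starting at $y_0$ is homotopic rel endpoints to $\delta$, hence is itself a loop at $y_0$. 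Splitting this lift as the lift of $\bar f\circ\gamma$ followed by the lift of $\bar f\circ(\gamma')^{-1}$, and using uniqueness of path lifting, one concludes that the lifts of $\bar f\circ\gamma$ and of $\bar f\circ\gamma'$ starting at $y_0$ terminate at the same point, so $f(z)$ is well defined. Taking $\gamma$ constant gives $f(z_0)=y_0$, and $\Pi\circ f=\bar f$ by construction.

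It remains to show $f$ is continuous, and this is where local path connectedness of $Z$ is indispensable. Fix $z\in Z$, let $U\subseteq X$ be an evenly covered open neighbourhood of $\bar f(z)$ and $\widetilde U\subseteq\Pi^{-1}(U)$ the sheet containing $f(z)$, so that $\Pi|_{\widetilde U}\colon\widetilde U\to U$ is a homeomorphism. By continuity of $\bar f$ and local path connectedness, choose a path-connected open set $V\ni z$ with $\bar f(V)\subseteq U$. For $w\in V$, join $z_0$ to $z$ by $\gamma$ and $z$ to $w$ by a path $\eta$ lying in $V$; the lift of $\bar f\circ(\gamma\cdot\eta)$ at $y_0$ is the lift of $\bar f\circ\gamma$ (ending at $f(z)$) followed by the lift of $\bar f\circ\eta$ starting at $f(z)$, and since $\bar f\circ\eta$ runs inside $U$ this second piece equals $(\Pi|_{\widetilde U})^{-1}\circ(\bar f\circ\eta)$, which stays in $\widetilde U$. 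Hence $f(w)=(\Pi|_{\widetilde U})^{-1}(\bar f(w))$ for every $w\in V$, i.e. $f|_V=(\Pi|_{\widetilde U})^{-1}\circ\bar f|_V$ is continuous; as $z$ was arbitrary, $f$ is continuous.

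The main obstacle is precisely the combination of this continuity argument and the well-definedness step: the former is what forces the local path connectedness hypothesis, while the latter is where the $\pi_1$-inclusion is used, together with the monodromy (homotopy lifting) theorem. Finally one may remark that the lift, when it exists, is unique: this follows from the unique lifting property on the connected space $Z$.
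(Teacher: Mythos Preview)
Your proof is correct and is the standard argument from algebraic topology (as in Hatcher, Proposition 1.33, or the Analysis Situs reference the paper cites). However, note that the paper does \emph{not} prove this theorem: it is stated inside a remark following the proof of Proposition~\ref{Prop_Lifting} and is simply quoted with a citation to \cite{Analysis-situs-LiftingTheorem}, as a known tool used in that proof. There is therefore no ``paper's own proof'' to compare against; you have supplied a complete proof where the paper deliberately defers to the literature.
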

\end{Rem}

\subsection{Lowering of \texorpdfstring{$\lambda$}{-lambda} and \texorpdfstring{$u$}{-u} through \texorpdfstring{$\Pi$}{-pi}}
	Using Proposition \ref{Prop_Lowering} on $u:\mathbb{D}(0,R)\to\mathcal{C}$ and $\lambda:\mathcal{C}\to\mathbb{D}(0,R)$ we obtains maps $u_A:\mathbb{D}(0,R^d)\to\mathcal{C}_A$ and $\lambda_A:\mathcal{C}_A\to\mathbb{D}(0,R^d)$ such that 
	$$ \pi_d\circ\lambda= \lambda_A\circ \Pi, \text{ and }\Pi\circ u = u_A\circ \pi_d.$$
	Remark that by assumption $ii)$ of the Corollary \ref{Cor_Black_box_general}, $u$ extends by continuity on $\overline{\mathbb{D}(0,R)}$, then so does $u_A$ on $\overline{\mathbb{D}(0,R^d)}$.
	
We claim that the maps $\omega\mapsto u_A(R^d \omega)$ and $\omega\mapsto \frac{1}{R^d}\lambda_A(\omega)$ satisfies the assumptions of Proposition \ref{Prop_Composition_racine}, and so there exists a map $h_A$, holomorphic in a neighbourhood of the closure $\overline{\mathbb{D}(1,1)^{1/2}}$ such that $h_A'(0)\in T_{u_A(R^d)}\mathcal{C}_A$ is non-zero and for any $\omega\in\mathbb{D}(0,R^d)$:
\begin{equation}\label{formula_t2}
	u_A(\omega)=h_A\left(\sqrt{1-\frac{\omega}{R^d}}\right).
\end{equation}

\begin{Clm}\label{Claim_1}
The quotient map $u_A$ extends holomorphically on the neighbourhood of any point on the boundary of $\mathbb{D}(0,R^d)$ minus $R^d$ and for any complex number $\omega\in\mathbb{D}(0,R^d)$,
		$$ 	\lambda_A\circ u_A(\omega)=\omega. $$	
\end{Clm}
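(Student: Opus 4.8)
The plan is to extract both conclusions directly from the two commutation identities $\pi_d\circ\lambda = \lambda_A\circ\Pi$ and $\Pi\circ u = u_A\circ\pi_d$ furnished by Proposition \ref{Prop_Lowering}, using in addition that $\pi_d:\mathbb{D}(0,R)\to\mathbb{D}(0,R^d)$ is surjective and is a local biholomorphism away from $0$, and that $u_A$ is already known to be holomorphic on all of $\mathbb{D}(0,R^d)$.

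First I would prove the identity $\lambda_A\circ u_A=\mathrm{id}_{\mathbb{D}(0,R^d)}$. For $z\in\mathbb{D}(0,R)$ one computes, using $\Pi\circ u=u_A\circ\pi_d$, then $\pi_d\circ\lambda=\lambda_A\circ\Pi$, then assumption i) of Corollary \ref{Cor_Black_box_general}:
$$\lambda_A\bigl(u_A(z^d)\bigr)=\lambda_A\bigl(\Pi(u(z))\bigr)=\bigl(\lambda(u(z))\bigr)^d=z^d.$$
Since every $\omega\in\mathbb{D}(0,R^d)$ is of the form $z^d$ for some $z$ with $|z|<R$, this yields $\lambda_A\circ u_A(\omega)=\omega$ for all such $\omega$.

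Next I would establish the holomorphic extension of $u_A$ across $\partial\mathbb{D}(0,R^d)\setminus\{R^d\}$. Fix $\omega_0$ with $|\omega_0|=R^d$ and $\omega_0\neq R^d$, and let $z_0$ be any $d$-th root of $\omega_0$; then $|z_0|=R$, and the relation $z_0^d=\omega_0\neq R^d$ forces $z_0\notin\{R(e^{2i\pi/d})^k:0\le k<d\}$, so assumption v) of Corollary \ref{Cor_Black_box_general} provides a holomorphic extension of $u$ to a neighbourhood of $z_0$. As $z_0\neq 0$, the map $\pi_d$ is a biholomorphism of a neighbourhood of $z_0$ onto a neighbourhood of $\omega_0$; let $\sigma$ be the corresponding holomorphic local inverse with $\sigma(\omega_0)=z_0$. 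From $\Pi\circ u=u_A\circ\pi_d$ one gets $u_A=\Pi\circ u\circ\sigma$ on a neighbourhood of $\omega_0$ intersected with $\mathbb{D}(0,R^d)$, and the right-hand side is holomorphic on a full neighbourhood of $\omega_0$, being a composition of the extended $u$, the holomorphic map $\Pi$, and $\sigma$; this is the required extension.

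The argument is essentially routine, and I expect no serious obstacle. The one point needing a little care is the root bookkeeping: one must observe that the hypothesis $\omega_0\neq R^d$ is exactly what guarantees that none of the $d$-th roots of $\omega_0$ lands on an excluded point $R(e^{2i\pi/d})^k$. It is also worth noting that, since $u_A$ is already globally defined and holomorphic on $\mathbb{D}(0,R^d)$ by Proposition \ref{Prop_Lowering}, the local extensions coming from different choices of the $d$-th root $z_0$ automatically agree, so no separate gluing verification is needed.
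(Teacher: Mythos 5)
Your proof is correct and follows essentially the same route as the paper: the extension across $\partial\mathbb{D}(0,R^d)\setminus\{R^d\}$ is obtained by writing $u_A=\Pi\circ u\circ\sigma$ with $\sigma$ a local inverse of $\pi_d$ near a nonzero point, and the identity $\lambda_A\circ u_A=\mathrm{id}$ comes from chaining the two intertwining relations with assumption i). Your derivation of the identity via the surjectivity of $\pi_d$ (substituting $\omega=z^d$) is a slightly cleaner packaging of the paper's local-section computation, and your root bookkeeping for the excluded boundary points is stated more carefully than in the paper, but the substance is the same.
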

\begin{proof}[Proof of claim \ref{Claim_1}]
	By assumptions $v)$ of Corollary \ref{Cor_Black_box_general}, $u$ extends holomorphically on the neighbourhood of any point on the boundary of $\mathbb{D}(0,R)$ minus $\{(e^{2i\pi/d})^k : k\in \{1,...,d\}\}$. Using local sections $s$ of $\pi_d$ and since $u_A=\Pi\circ u \circ s$ we obtains the desired result.
	
	For the second result of the lemma, note that away from zero, we locally have:
	$$\lambda_A\circ u_A=\lambda_A\circ \Pi\circ \sigma \circ u_A = \pi_d\circ\lambda\circ u \circ s = \pi_d\circ s = Id.$$
	Where we used the assumption $i)$ of the Corollary \ref{Cor_Black_box_general}. And obviously $\lambda_A\circ u_A(0)=\lambda_A(o_A)=0$.
	\end{proof}
\begin{Clm}\label{Claim_2}
	The first derivative of $\lambda_A$ at $u_A(R^d)$ is zero on the tangent space $T_{u_A(R^d)}\mathcal{C}_A$ of $\mathcal{C}_A$ at $u_A(R^d)$, 
		$$T_{u_A(R^d)}\lambda_A\equiv 0;$$ 
		
		And its second derivative at $u_A(R^d)$ is non-zero.
\end{Clm}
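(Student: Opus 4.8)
The plan is to exploit that the point $p := u(R)$ avoids the branch locus of $\Pi$, so that $\Pi$ is a local biholomorphism near $p$ and $\lambda_A$ is, in a neighbourhood of $p_A := u_A(R^d) = \Pi(p)$, simply the $d$-th power of $\lambda$ read through a local section of $\Pi$; the two derivative statements will then fall out of hypotheses $iii)$ and $iv)$ of Corollary \ref{Cor_Black_box_general} by the chain rule.

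First I would check that $p \neq o$. Indeed, by $i)$ we have $\lambda(p) = \lambda(u(R)) = R$, while $\lambda(o) = \lambda(u(0)) = 0$, and $R > 0$. By Proposition \ref{Prop_R_C_d}, $\Pi$ is ramified only at $o$, hence it is a local biholomorphism on a neighbourhood of $p$, and there is a holomorphic local section $\sigma \colon U_{p_A} \to \mathcal{C}$ of $\Pi$, defined on a neighbourhood $U_{p_A}$ of $p_A = \Pi(p)$, with $\sigma(p_A) = p$. The relation recalled at the beginning of this subsection, $\pi_d \circ \lambda = \lambda_A \circ \Pi$, reads $\lambda_A \circ \Pi = \lambda^d$; composing on the right with $\sigma$ gives
\[
\lambda_A|_{U_{p_A}} = (\lambda \circ \sigma)^d .
\]
Here $v := \lambda \circ \sigma$ is holomorphic near $p_A$ with $v(p_A) = \lambda(p) = R \neq 0$.

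Next I would transport the computation to $\mathbb{C}$ via a local coordinate $c$ of $\mathcal{C}_A$ at $p_A$ with $c(p_A) = 0$. Since $\Pi$ is a local biholomorphism near $p$, the map $\tilde{c} := c \circ \Pi$ is a local coordinate of $\mathcal{C}$ at $p$ with $\tilde{c}(p) = 0$, and $\lambda \circ \sigma \circ c^{-1} = \lambda \circ \tilde{c}^{-1}$ near $0$. Setting $\tilde{v} := \lambda \circ \tilde{c}^{-1} = v \circ c^{-1}$ we get $\lambda_A \circ c^{-1} = \tilde{v}^{\,d}$ near $0$. Reading hypotheses $iii)$ and $iv)$ in this coordinate (legitimate because $D_p\lambda = 0$ makes $D^2_p\lambda$ intrinsic) gives $\tilde{v}(0) = R \neq 0$, $\tilde{v}'(0) = 0$ and $\tilde{v}''(0) \neq 0$. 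Differentiating $\lambda_A \circ c^{-1} = \tilde{v}^{\,d}$ then yields
\[
(\lambda_A \circ c^{-1})'(0) = d\,\tilde{v}(0)^{d-1}\tilde{v}'(0) = 0 ,
\]
and, using $\tilde{v}'(0) = 0$,
\[
(\lambda_A \circ c^{-1})''(0) = d(d-1)\,\tilde{v}(0)^{d-2}\tilde{v}'(0)^2 + d\,\tilde{v}(0)^{d-1}\tilde{v}''(0) = d\,R^{d-1}\,\tilde{v}''(0) \neq 0 ,
\]
since $d \geq 1$, $R > 0$ and $\tilde{v}''(0) \neq 0$. Hence $D_{p_A}\lambda_A = 0$ on $T_{p_A}\mathcal{C}_A$ and, this being granted, $D^2_{p_A}\lambda_A \neq 0$ intrinsically, which is the claim.

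The only delicate point is the first one, namely that $p = u(R)$ is not the branching point $o$: this is exactly what allows one to pass from the global identity $\lambda_A \circ \Pi = \lambda^d$ to a clean local expression for $\lambda_A$ near $p_A$. Once a local section through $p$ is available, the remainder is a routine two-term chain-rule computation, and I do not anticipate any genuine analytic obstruction.
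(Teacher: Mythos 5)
Your proof is correct and follows the same route as the paper's one-line argument, namely that $\Pi$ is a local biholomorphism away from $o=u(0)$ so that the claim reduces to hypotheses $iii)$ and $iv)$ of Corollary \ref{Cor_Black_box_general}. In fact you supply the detail the paper glosses over: since the lowering relation is $\lambda_A\circ\Pi=\pi_d\circ\lambda=\lambda^d$, the map $\lambda_A$ is locally the $d$-th power $(\lambda\circ\sigma)^d$ rather than $\lambda\circ\sigma$ itself, and your chain-rule computation using $\lambda(p)=R\neq 0$ is exactly what is needed to see that the vanishing of the first derivative and the non-vanishing of the second survive this $d$-th power.
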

	\begin{proof}[Proof of claim \ref{Claim_2}]
	The quotient map $\Pi:\mathcal{C}\to\mathcal{C}_A$ is a local diffeomorphism everywhere but at $o=u(0)$, the claim thus follows from assumption $iii)$ and $iv)$ of the Corollary \ref{Cor_Black_box_general}.
%	For any parametrization of $\mathcal{C}_A$ in the neighbourhood of $\Pi(u(R))=u_A(R^d)$, $\eta_A$ such that $\\eta_A(0)=u_A(R^d)$.
%	 $\tau\mapsto\lambda_A \circ \eta_A$ is holomorphic and we want to show that $(\lambda_A \circ \eta_A)'(0)=0$ and $(\lambda_A\circ \eta_A)''(0)\neq 0$.
%	 
%	 Let $\eta$ be a parametrization of $\mathcal{C}$ in the neighbourhood of $u(R)$ such that $\eta(0)=u(R)$ and $\Pi\circ\eta =\eta_A$.
%	 We have:
%	 $$\lambda_A \circ \eta_A = \lambda_A \circ \Pi \circ \eta = \pi_d \circ \lambda \circ \eta$$
%	 Thus for any $\omega$ near zero:
%	\begin{equation}\label{eq-t13}
%	 	(\lambda_A \circ \eta_A)'(\omega)=T_{\lambda(\eta(\omega))}\pi_d ((\lambda\circ\eta)'(\omega))
%	 \end{equation}
%	 Which is zero for $\omega=0$ since $(\lambda\circ\eta)'(0)=0$ by assumption $iii)$ of Corollary \ref{Cor_Black_box_general}.
%	 Differentiating again (\ref{eq-t13}) at $\omega=0$ we get:
%	$$(\lambda_A\circ\eta_A)''(0)=T_R\pi_d( (\lambda\circ\eta)''(0))$$
%	which is non-zero for $T_R\pi_d$ is invertible and $(\lambda\circ\eta)''(0)$ is non-zero by assumption $iv)$ of the Corollary \ref{Cor_Black_box_general}.
	\end{proof}
	
In accordance to the previous lemmas, the Proposition \ref{Prop_Composition_racine} applies. Thus, there exists a holomorphic map $h_A$ in the neighbourhood of $\overline{\mathbb{D}(R,R)^{1/2}}$ with values in $\mathcal{C}_A$ such that formula (\ref{formula_t2}) holds.

\subsection{Proof of Corollary \ref{Cor_Black_box_general}}
Since $g$ is $A$-invariant, it factors through $\Pi$. Let $g_A:\mathcal{C}_A\to\mathbb{C}$ be its factors map i.e be the map such that $g_A\circ \Pi = g$.

We thus have for any complex number $z\in\mathbb{D}(0,R)$, $$g\circ u (z)= g_A\circ\Pi\circ u (z) = g_A\circ u_A (z^d).$$
From formula (\ref{formula_t2}) we get that for any complex number $z\in\mathbb{D}(0,R)$,
\begin{equation}\label{formulat_t3}
	(g\circ u)(z)= (g_A\circ h_A)\left(\sqrt{1-\left(\frac{z}{R}\right)^d}\right).
\end{equation}
Besides the derivative of $(g_A\circ h_A)$ at $0\in\mathbb{C}$ is 
\begin{equation}\label{formula_t4}
(g_A\circ h_A)'(0)=(D_{p_A}g_A)(h_A'(0)),
\end{equation} 
where $p_A=h_A(0)=u_A(R^d)$. Using a local section $\sigma$ of $\Pi$ in the neighbourhood of $p_A$, such that $\sigma(p_A)=p=u(R)$, we get $g_A=g\circ\sigma$ and its differential at $p_A$ is: 
$$D_{p_A}g_A=D_{p}g\circ D_{p_A}\sigma,$$ 
that is non-zero on $T_{p_A}\mathcal{C}$ by assumption $vi)$ of the Corollary \ref{Cor_Black_box_general}. Since $h_A'(0)\neq 0$, so is (\ref{formula_t4}).

It follows from theorem \ref{Thm_Tauberien_tout_ordre} that if $\sum_n b_n \omega^n$ is the power series expansion of $ \omega\mapsto (g_A\circ h_A)(\sqrt{1-\omega})$ in the neighbourhood of $\omega=0$, then we obtain a unique sequence of constants $(c_l)_{l\geq 1}$ and a constant $C$ such that for any positive integer $K\in\mathbb{N}\setminus\lbrace 0\rbrace$:
	\begin{equation}
		b_n=\frac{C}{n^{3/2}}\left(1+\frac{c_1}{n^{1}}+...+\frac{c_{K}}{n^{K}}+ O\left(\frac{1}{n^{K+1}}\right)\right)
		\end{equation}
	And $C=-\frac{(g_A\circ h_A)'(0)}{2\sqrt{\pi}}\neq 0$. 
	
	From this we deduce that the coefficients of the power series expansion $\sum_n a_n z^n$ of $g\circ u$ in the neighbourhood of $z=0$, satisfy for any positive integer $K\in\mathbb{N}\setminus\lbrace 0\rbrace$, the asymptotic expansion:
	\begin{equation}
		a_{dn}=CR^{-dn}\frac{1}{n^{3/2}}\left(1+\frac{c_1}{n^{1}}+...+\frac{c_{K}}{n^{K}}+ O\left(\frac{1}{n^{K+1}}\right)\right)
		\end{equation}
		and for any $n$ such that $n\not\equiv 0 \, [d]$, we have $a_n=0$. This is what we wanted when $g$ is $A$-invariant. For the general case the result follows from the previous paragraph and the remark \ref{Rem_Reduction}. 
		The Corollary \ref{Cor_Black_box_general} is proven.		
\section{Pole type singularity extension}
Now instead of considering a holomorphic function $g$ in a neighbourhood of $\overline{\{u(z): z\in\mathbb{D}(0,R)\}}$ such that for any $q$, 
\begin{equation}\label{covar_prop}
	g(Aq)=e^{2i\pi/d}g(q),
\end{equation} we will consider a \underline{meromorphic} function $g$, with the same equivariance property (\ref{covar_prop}), that admits poles at the ramification points $\{u(R e^{2i\pi k/d}) : k\in\{0,1,...,d-1\}\}$. We also present below a simpler case when $g$ admits poles in the $A$-orbit $\{u(R' e^{2i\pi k/d}): k\in\{0,1,...,d-1\}\}$, of $u(R')$, for some $R'\in]0,R[$.
\begin{Cor} \label{Cor_Black_box_pole}
Let $R>0$ be a real number, $d$ be a positive integer, $\mathcal{C}$ be a smooth analytic complex curve (i.e. a Riemann surface or a complex manifold of dimension 1) and $A\in Aut(\mathcal{C})$ of order $d$ (i.e $A,...,A^{d-1}\neq Id_\mathcal{C}$, $A^d=Id_\mathcal{C}$). Let $u:\mathbb{D}(0,R)\to\mathcal{C}$ and $\lambda:\mathcal{C}\to\mathbb{C}$ be two holomorphic maps and let $g$ be a meromorphic function in a neighbourhood of the closure $\overline{\{ u(z): z\in\mathbb{D}(0,R)\}}\subset\mathcal{C}$, with no pole at $u(0)$. Denote by $\sum_{n\in\mathbb{N}} a_n z^n$ the power series expansion of the complex valued function $g\circ u$ near $z=0$. Assume that
	\begin{enumerate}[label=\roman*)]
		\item $\lambda\circ u=id_{\mathbb{D}(0,R)}$;
		\item $u$ extends by continuity into a continuous map $\overline{\mathbb{D}(0,R)}\to\mathcal{C}$;
		\item The first derivative of $\lambda$ at $p=u(R)$ is zero, over $T_{u(R)}\mathcal{C}$: $D_p\lambda=0$;
		\item The second derivative of $\lambda$ at $p$ is non-zero 
			: $D^2_p \lambda\neq 0$;
		\item $u$ extends holomorphically to the neighbourhood of any point in the boundary of $\mathbb{D}(0,R)$, minus the $d$-th roots of $R^d$: $$\partial\mathbb{D}(0,R)\setminus\left\lbrace R(e^{2i\pi/d})^k: k\in \{0,...,d-1\}\right\rbrace;$$
		\item The set of poles of $g$ is the $A$-orbit of $p$. And these poles have multiplicity $M\geq 1$.
		\item $Fix(A^1)=...=Fix(A^{d-1})=\{u(0)\}$ and for any $ z\in\mathbb{D}(0,R)$, $$u(e^{2i\pi/d}z)=Au(z);$$
		\item There is an integer $r\in\{0,...,d-1\}$ such that for any $q\in \overline{\{ u(z): z\in\mathbb{D}(0,R)\}}$, $$g(Aq)=e^{2i\pi r/d}g(q).$$
	\end{enumerate}
	 Then for any non-negative integer $n$ such that $n\not\equiv r \, [d]$ we have $a_n=0$ and there exists a non-zero constant $D$ and a unique sequence of constants $(d_l)_{l\geq 1}$ such that for any positive integer $K\in\mathbb{N}\setminus\lbrace 0\rbrace$ we have the asymptotic expansion:
	\begin{equation}
		a_{dn+r}=DR^{-dn}\frac{1}{n^{3/2-M}}\left(1 + \frac{d_1}{n^{1}}+...+\frac{d_K}{n^{K}}+O\left(\frac{1}{n^{K+1}}\right)\right).
	\end{equation}

\end{Cor}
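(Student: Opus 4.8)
The plan is to follow the proof of Corollary~\ref{Cor_Black_box_general} essentially verbatim, replacing Theorem~\ref{Thm_Tauberien_tout_ordre} by Theorem~\ref{Thm_Tauberien_pole_tout_ordre} at the very last step, and inserting the pole-localisation trick already used in the proof of Corollary~\ref{Cor_Black_Box_Pole_aperiodic}. First I would reduce to the $A$-invariant case exactly as in Section~\ref{subsection_reduction_A_inv}: since $g$ has no pole at $u(0)$, the function $g\circ u$ is holomorphic near $z=0$, so Lemma~\ref{Multiplicity_of_the_zeros} shows $z=0$ is a zero of $g\circ u$ of order at least $r$, and $h:=g/\lambda^{r}$ is again a meromorphic, now $A$-invariant, function on $\mathcal{C}$. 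The point to check is that dividing by $\lambda^{r}$ leaves the pole divisor of $g$ untouched: along the $A$-orbit of $p=u(R)$ one has $\lambda(u(R))=R\neq 0$, so $\lambda^{r}$ has neither a zero nor a pole there, and $h$ still has poles exactly along the $A$-orbit of $p$, with multiplicity $M$, and no pole at $u(0)$. As in Remark~\ref{Rem_Reduction}, the power series of $h\circ u$ is that of $g\circ u$ shifted in index by $r$, so it is enough to prove the statement for $r=0$ and reinstate the shift at the end.

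Assume now $g$ is $A$-invariant. I would run the lowering machinery of the previous subsections unchanged: lower $\lambda$ and $u$ through $\Pi$ (Proposition~\ref{Prop_Lowering}) to get $\lambda_{A}$ on $\mathcal{C}_{A}$ and $u_{A}\colon\overline{\mathbb{D}(0,R^{d})}\to\mathcal{C}_{A}$, verify the hypotheses of Proposition~\ref{Prop_Composition_racine} via Claims~\ref{Claim_1}--\ref{Claim_2}, and obtain a holomorphic $h_{A}$ near $\overline{\mathbb{D}(R,R)^{1/2}}$ with $h_{A}'(0)\neq 0$ and $u_{A}(\omega)=h_{A}(\sqrt{1-\omega/R^{d}})$. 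Since $g$ is $A$-invariant it descends to a meromorphic $g_{A}$ on $\mathcal{C}_{A}$ with $g=g_{A}\circ\Pi$ (poles descend through local sections of $\Pi$, and $o_{A}$ is a removable singularity of $g_{A}$ because $g$ has no pole at $o=u(0)$). Because $\Pi$ is a local biholomorphism away from $o$, and $p\neq o$ (otherwise $\lambda(p)$ would be both $0$ and $R$) so that $p_{A}:=\Pi(p)=u_{A}(R^{d})\neq o_{A}$, the only pole of $g_{A}$ on $\overline{\{u_{A}(\omega):\omega\in\mathbb{D}(0,R^{d})\}}$ is at $p_{A}$, of multiplicity $M$. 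After shrinking the domain of $h_{A}$ so that $h_{A}^{-1}(\{p_{A}\})=\{0\}$ --- exactly as in the proof of Corollary~\ref{Cor_Black_Box_Pole_aperiodic} --- the function $G:=g_{A}\circ h_{A}$ is meromorphic in a neighbourhood of $\overline{\mathbb{D}(1,1)^{1/2}}$ with its unique pole at $w=0$ of multiplicity $M$, and for $z\in\mathbb{D}(0,R)$ one has $g\circ u(z)=G(\sqrt{1-(z/R)^{d}})$.

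Finally I would apply Theorem~\ref{Thm_Tauberien_pole_tout_ordre} to $\Phi(\omega):=G(\sqrt{1-\omega})$: its power series $\sum_{n}b_{n}\omega^{n}$ near $\omega=0$ then satisfies, for every $K\geq 1$, $b_{n}=\frac{D}{n^{3/2-M}}(1+d_{1}/n+\dots+d_{K}/n^{K}+O(n^{-(K+1)}))$, where $D=-Res(G,0)/\sqrt{\pi}\neq 0$ when $M=1$, and the non-vanishing of $D$ in general follows in the same way from the top Laurent coefficient of $G$ at $0$. Since $g\circ u(z)=\Phi((z/R)^{d})$, comparing power series gives $a_{dn}=b_{n}R^{-dn}$ and $a_{m}=0$ for $m\not\equiv 0\,[d]$; reinstating the index shift $n\mapsto n+r$ from the first paragraph yields $a_{dn+r}=b_{n}R^{-dn}$ and $a_{m}=0$ for $m\not\equiv r\,[d]$, which is the asserted expansion. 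No step is conceptually hard here; the only genuinely delicate points are the two bookkeeping checks highlighted above --- that division by $\lambda^{r}$ preserves the pole divisor, and that the domain of $h_{A}$ can be contracted so that $G$ has exactly one pole --- everything else being a direct transcription of the holomorphic case of Corollary~\ref{Cor_Black_box_general}.
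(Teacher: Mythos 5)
Your proof follows the paper's own argument essentially step for step: reduction to the $A$-invariant case via Remark \ref{Rem_Reduction}, descent of $g$, $\lambda$ and $u$ through $\Pi$, the factorisation $g\circ u(z)=(g_A\circ h_A)\bigl(\sqrt{1-(z/R)^d}\bigr)$, and the pole-type Tauberian theorem at the end. The only cosmetic difference is that you inline the domain-shrinking argument and invoke Theorem \ref{Thm_Tauberien_pole_tout_ordre} directly, whereas the paper cites Corollary \ref{Cor_Black_Box_Pole_aperiodic} (whose proof is exactly that shrinking plus that theorem); your additional checks --- that dividing by $\lambda^{r}$ preserves the pole divisor and that $p_A\neq o_A$ --- are correct and only make the argument more complete.
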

\begin{proof}
We first deal with the case when $g$ is $A$-invariant. Since $g$ is $A$-invariant, it factors through $\Pi$. Let $g_A:\mathcal{C}_A\to\mathbb{C}$ be its factors through $\Pi$, i.e be such that $g_A\circ \Pi = g$. And $g_A$ admits an only pole at $\Pi(p)$, which has degree $M$. Note that $g_A$ is not the lowering of $g$. Let $u_A$ be the lowering of $u$, as described in \ref{Prop_Lowering} and let $h_A$ be a $\mathcal{C}_A$-valued map that is holomorphic in a neighbourhood of $\overline{\mathbb{D}(1,1)^{1/2}}$, with non-zero derivative at $0$, such that
\begin{equation*}
	u_A(\omega)=h_A\left(\sqrt{1-\frac{\omega}{R^d}}\right).
\end{equation*}
We have justified the existence of such map in the two claims following (\ref{formula_t2}).

We thus have for any complex number $z\in\mathbb{D}(0,R)$, $$g\circ u (z)= g_A\circ\Pi\circ u (z) = g_A\circ u_A (z^d).$$
We thus get that for any complex number $z\in\mathbb{D}(0,R)$,
\begin{equation}
	(g\circ u)(z)= (g_A\circ h_A)\left(\sqrt{1-\left(\frac{z}{R}\right)^d}\right).
\end{equation}

Besides, since the derivative of $h_A$ at $0\in\mathbb{C}$ is non-zero and the function $g_A$ admits a pole of degree $M$, at $h_A(0)=u_A(R)=\Pi(p)$, the function $(g_A\circ h_A)$ admits a pole at $0$ of degree $M$.

It follows from Corollary \ref{Cor_Black_Box_Pole_aperiodic} that if $\sum_n b_n \omega^n$ is the power series expansion of $ \omega\mapsto (g_A\circ h_A)(\sqrt{1-\omega})$ in the neighbourhood of $\omega=0$, then we obtain a unique sequence of constants $(d_l)_{l\geq 1}$ and a constant $D$ such that for any positive integer $K\in\mathbb{N}\setminus\lbrace 0\rbrace$:
	\begin{equation}
		b_n=\frac{D}{n^{3/2-M}}\left(1+\frac{d_1}{n^{1}}+...+\frac{d_{K}}{n^{K}}+ O\left(\frac{1}{n^{K+1}}\right)\right).
	\end{equation}
		
	From this we deduce that the coefficients $(a_n)_n$ of the power series expansion of $g\circ u$ in the neighbourhood of $z=0$, satisfies for any positive integer $K\in\mathbb{N}\setminus\lbrace 0\rbrace$:
	\begin{equation}
		a_{dn}=DR^{-dn}\frac{1}{n^{3/2-M}}\left(1+\frac{d_1}{n^{1}}+...+\frac{d_{K}}{n^{K}}+ O\left(\frac{1}{n^{K+1}}\right)\right)
		\end{equation}
		and for any $n$ such that $n\not\equiv 0 [d]$, we have $a_n=0$. This is what we wanted when $g$ is $A$-invariant. The general case follows from the previous paragraph and the remark \ref{Rem_Reduction}. 
\end{proof}

An easier case than the one described above is when the map $g$ possesses a pole over the $A$-orbit of some point $u(R')$ for $0<R'<R$. In that case we obtain the following Tauberian theorem and its corollary:

\begin{Thm}\label{Asymptotic_Expansion_for_merop_func}
	Let $d$ and $M$ be two positive integers, let $R>R'>0$ be real positive numbers. Consider $g$ a meromorphic function in a neighbourhood of $\overline{\mathbb{D}}(0,R)$ such that
	\begin{enumerate}[label=\roman*)]
	\item The set of poles of $g$ in $\overline{\mathbb{D}}(0,R)$ is $\{R',R'e^{2i\pi/d},... , R' (e^{2i\pi/d})^{d-1} \}$, the set of $d$-th root of $R^d$ and all its poles has degree $M$.
	\item There is an integer $r\in\{0,...,d-1\}$ such that for any $q\in \overline{\{ u(z): z\in\mathbb{D}(0,R)\}}$, $$g(e^{2i\pi/d}q)=e^{2i\pi r/d}g(q).$$
	\end{enumerate}
	
	Denote by $\sum_{n\in\mathbb{N}} b_n z^n$ the power series expansion of $g$ near $z=0$, then for any non-negative integer $n$ such that $n\not\equiv r [d]$ we have $b_n=0$ and there exist a non-zero constant $D_{-M}$ and constants $D_{-M+1}$, ..., $D_{-1}$, such that for some $\varepsilon>0$, and for any positive integer $K\in\mathbb{N}\setminus\lbrace 0\rbrace$, we have the asymptotic expansion as $n$ goes to $\infty$,
	\begin{equation}
		b_{dn+r}=\frac{D_{-M}}{n^{1-M}( R')^{dn}}+\frac{D_{-{M+1}}}{n^{2-M}( R')^{dn}}+...+\frac{D_{-1}}{( R')^{dn}} +O\left(\frac{1}{(R+\varepsilon)^{n}}\right).
	\end{equation}
\end{Thm}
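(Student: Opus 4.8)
The plan is to argue directly by subtraction of principal parts: unlike the boundary square-root singularities treated above, all poles of $g$ here lie strictly inside $\overline{\mathbb{D}}(0,R)$ (on the circle $|z|=R'<R$), so no singularity analysis is needed and the coefficients can be computed almost explicitly from geometric series. Write $\zeta_d:=e^{2i\pi/d}$. Since $g$ is holomorphic on $\mathbb{D}(0,R')$, the series $\sum_n b_n z^n$ converges there, and the equivariance $g(\zeta_d z)=\zeta_d^{r}g(z)$ of hypothesis $ii)$ lets me apply Lemma~\ref{Multiplicity_of_the_zeros} (after an obvious rescaling onto $\mathbb{D}$) to conclude that $b_n=0$ for every $n\not\equiv r\ [d]$.

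Next I would decompose $g$ into its principal parts. For $0\le k\le d-1$, write
\[
P_k(z)=\sum_{j=1}^{M}\frac{a_{k,j}}{(z-\zeta_d^{k}R')^{j}}
\]
for the principal part of $g$ at the pole $\zeta_d^{k}R'$; by hypothesis $i)$ this pole has degree exactly $M$, so $a_{k,M}\neq0$. Then $h:=g-\sum_{k=0}^{d-1}P_k$ is holomorphic in a neighbourhood of $\overline{\mathbb{D}}(0,R)$, hence on a slightly larger closed disk, so by Cauchy's estimates its Taylor coefficients decay faster than $R^{-n}$ — this holomorphic remainder will furnish the error term. Moreover the equivariance sends the principal part at $\zeta_d^{k}R'$ to $\zeta_d^{r}$ times the principal part at $\zeta_d^{k-1}R'$; by uniqueness of the Laurent principal parts this forces $P_k(\zeta_d z)=\zeta_d^{r}P_{k-1}(z)$ for all $k$, whence, iterating, $P_k(w)=\zeta_d^{kr}P_0(\zeta_d^{-k}w)$, i.e.
\[
a_{k,j}=\zeta_d^{k(r+j)}\,a_{0,j}\qquad(0\le k\le d-1,\ 1\le j\le M).
\]

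Finally I would assemble the coefficients. Expanding $\frac{1}{(z-w)^{j}}=\frac{(-1)^{j}}{w^{j}}\sum_{n\ge0}\binom{n+j-1}{j-1}(z/w)^{n}$, the $n$-th Taylor coefficient of $\sum_k P_k$ equals $\sum_{k=0}^{d-1}\sum_{j=1}^{M}a_{k,j}\,\frac{(-1)^{j}}{(\zeta_d^{k}R')^{n+j}}\binom{n+j-1}{j-1}$; substituting $a_{k,j}=\zeta_d^{k(r+j)}a_{0,j}$ makes the $k$-sum collapse to $\sum_{k=0}^{d-1}\zeta_d^{k(r-n)}$, which is $d$ when $n\equiv r\ [d]$ and $0$ otherwise (re-proving the vanishing), so that for $n=dm+r$
\[
b_{dm+r}=d\sum_{j=1}^{M}\frac{(-1)^{j}a_{0,j}}{(R')^{dm+r+j}}\binom{dm+r+j-1}{j-1}+O\!\left(\rho^{-(dm+r)}\right)
\]
for some $\rho>R$. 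Since $\binom{dm+r+j-1}{j-1}$ is a polynomial in $m$ of degree $j-1$ with leading coefficient $d^{j-1}/(j-1)!$, regrouping the main sum by decreasing powers of $m$ and pulling out $(R')^{-dm}$ produces constants $D_{-M},\dots,D_{-1}$ with $D_{-M}=\frac{(-1)^{M}d^{M}a_{0,M}}{(R')^{r+M}(M-1)!}\neq0$; absorbing the constant $(R')^{-r}$ and rewriting the remainder as $O((R+\varepsilon)^{-n})$ for a suitable $\varepsilon>0$ gives exactly the asserted expansion (the parameter $K$ in the statement is then vacuous, the main part being an exact polynomial in $m$). The argument is elementary; the only delicate points I anticipate are the roots-of-unity bookkeeping — deducing $a_{k,j}=\zeta_d^{k(r+j)}a_{0,j}$ from uniqueness of the principal parts and evaluating the geometric sum $\sum_{k}\zeta_d^{k(r-n)}$ — together with the verification that the regrouped $D_{-M}$ is nonzero, which is precisely where one uses that every pole has degree \emph{exactly} $M$ rather than at most $M$.
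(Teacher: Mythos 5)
Your proof is correct and follows essentially the same route as the paper's: subtract the principal parts at the $d$ poles, expand each $\frac{1}{(z-w)^{j}}$ as a binomial/geometric series, and control the holomorphic remainder by Cauchy's estimates on a slightly larger disk. The only cosmetic difference is that the paper first divides by $z^{N+r}$ to reduce to the invariant case and then regroups the principal parts into powers of $\frac{1}{z^{d}-(R')^{d}}$, whereas you keep the $d$ principal parts separate and let the roots-of-unity sum $\sum_{k}\zeta_d^{k(r-n)}$ perform the selection --- an equivalent bookkeeping.
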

\begin{proof}
We prove in detail this theorem in the case when $M=1$ that is, when the poles of $g$ in the set $\overline{\mathbb{D}}(0,R)$ are simple. The general case, which follows the same method is sketched at the end of the proof. Fix $\zeta_d=e^{2i\pi/d}$.

By Lemma \ref{Multiplicity_of_the_zeros} the function $g$ admits $z=0$ as a zero of order $N+r$ with $N\in d\mathbb{N}$. let $g_1$ be the meromorphic function in a neighbourhood of $\overline{\mathbb{D}}(0,R)$ that is non-zero at $z=0$ and that is defined by:
 $$g_1:z\mapsto \frac{g\circ u (z)}{z^{N+k}}.$$
In particular for any complex number $z\in\mathbb{D}(0,R)$, we have $g_1(\zeta_d z ) = g_1(z)$. Therefore if we denote by $c:=Res(g_1, R')$, we get for any integer $k\in\{0,...,d-1\}$,
	$$Res(g_1, R' \zeta_d^k)=\zeta_d^{k} c.$$

	There exists a holomorphic map $g_2$ in a neighbourhood of the set $\overline{\mathbb{D}}(0,R)$ such that for any complex number $z$ in $\overline{\mathbb{D}}(0,R'+\varepsilon)\setminus \{R'\zeta_d^k: k\in\{0,...,d-1\}\}$, for some $\varepsilon>0$ we have
	$$ g_2(\zeta_d z) = g_2(z) $$
	and for any complex number $z\in\mathbb{D}(0,R)$,
	\begin{equation}\label{eq:g2}
	g_1(z)=\sum_{k=0}^{d-1} \frac{c \zeta_d^k}{z-R'\zeta_d^k} + g_2(z).
	\end{equation}
	
Simplifying the above equality for $g_1(z)$ we get

\begin{equation}\label{eq:g1}
g_1(z)= cd(R')^{d-1}\frac{1}{z^d-(R')^d} + g_2(z).
\end{equation}

Now from Cauchy inequality the coefficients of the power series expansion of $g_2$ in a neighbourhood of $z=0$ are $O\left(\frac{1}{(R+\varepsilon)^n}\right)$, for some $\varepsilon >0$.
The result then immediately follows by expanding $\frac{1}{z^d-(R')^d}$.

For the case when $M\geq 2$, following the same strategy we get the equality for any complex number $z\in\mathbb{D}(0,R)$
\begin{equation}
g\circ u(z)=z^{N+r}\left( \sum_{m=1}^M \frac{c_m}{(z^d-(R')^d)^m} + g_2(z)\right)
\end{equation}
with $N\in d\mathbb{N}$, and $c_1,...,c_M$ being constants, $g_2$ being a holomorphic map in a neighbourhood of the set $\overline{\mathbb{D}}(0,R'+\varepsilon)$ for some $\varepsilon>0$,  such that for any complex number $z$ in $\overline{\mathbb{D}}(0,R'+\varepsilon)$ we have
	$$ g_2(\zeta_d z) = g_2(z).$$
	
Now for any positive integer $m$, the power series expansion of $z\mapsto \frac{1}{(z^d-(R')^d)^m}$ is 
$$\frac{(-1)^M}{(R')^{md}}\sum_{n\geq 0}\frac{\Gamma(m+n)}{\Gamma(m)\Gamma(n+1)} \left(\frac{z}{R'}\right)^{dn}.$$ 
And $n\mapsto\frac{\Gamma(m+n)}{\Gamma(m)\Gamma(n+1)}$ is a polynomial function $P(n)$, with variable $n$ of degree $M-1$. The result follows after simplification.
\end{proof}
\begin{Cor}\label{Cor_tauber_pole_before_aperiodic}
Let $R>R'>0$ be real numbers, $d$ be a positive integer, $\mathcal{C}$ be a smooth analytic complex curve (i.e. a Riemann surface or a complex manifold of dimension 1) and $A\in Aut(\mathcal{C})$ of order $d$ (i.e such that $A,...,A^{d-1}\neq Id_\mathcal{C}$, $A^d=Id_\mathcal{C}$). Let $u:\mathbb{D}(0,R)\to\mathcal{C}$ and $\lambda:\mathcal{C}\to\mathbb{C}$ be two holomorphic maps and let $g$ be a meromorphic function in a neighbourhood of the closure $\overline{\{ u(z): z\in\mathbb{D}(0,R')\}}\subset\mathcal{C}$. Denote by $\sum_{n\in\mathbb{N}} b_n z^n$ the power series expansion of the complex valued function $g\circ u$ near $z=0$. Assume that
	\begin{enumerate}[label=\roman*)]
		\item $\lambda\circ u=id_{\mathbb{D}(0,R)}$;
		\item $g$ is a meromorphic function over a neighbourhood of $\overline{\{u(z): z\in\mathbb{D}(0,R')\}}$;
		\item The set of poles of $g$ is the $A$-orbit of the point $u(R')$, and all its poles have degree $M$;
		\item $Fix(A^1)=...=Fix(A^{d-1})=\{u(0)\}$ and for any $ z\in\mathbb{D}(0,R)$, $$u(e^{2i\pi/d}z)=Au(z);$$
		\item There is an integer $r\in\{0,...,d-1\}$, such that for any $q\in \{ u(z): z\in\mathbb{D}(0,R')\}$, $g(Aq)=e^{2i\pi r/d}g(q)$.
	\end{enumerate}
	
	 Then for any non-negative integer $n$ such that $n\not\equiv r [d]$ we have $b_n=0$ and there exist a non-zero constant $D_{-M}$ and constants $D_{-M+1}$, ..., $D_{-1}$, such that for some $\varepsilon>0$ and for any positive integer $K\in\mathbb{N}\setminus\lbrace 0\rbrace$, we have the asymptotic expansion, as $n$ goes to $\infty$,
	\begin{equation}
		b_{dn+r}=\frac{D_{-M}}{n^{1-M} (R')^{dn}}+\frac{D_{-{M+1}}}{n^{2-M} (R')^{dn}}+...+\frac{D_{-1}}{( R')^{dn}} +O\left(\frac{1}{(R'+\varepsilon)^{n}}\right).
	\end{equation}
\end{Cor}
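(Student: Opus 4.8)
The plan is to transport everything to the unit disc and invoke Theorem \ref{Asymptotic_Expansion_for_merop_func}, which is a purely one-variable statement; here the Riemann surface $\mathcal{C}$ and the covering $\Pi$ of Section \ref{Subsection_Extension_to_ZdZ_action} play no role, since the poles of $g$ sit at radius $R'<R$, away from the square-root singularity of $u$ at radius $R$ — and it is exactly this separation that makes the present case easier than Corollary \ref{Cor_Black_box_pole}. First I would record that $u$ is globally injective and an immersion: assumption $i)$, namely $\lambda\circ u=\mathrm{id}_{\mathbb{D}(0,R)}$, forces $z_1=\lambda(u(z_1))=\lambda(u(z_2))=z_2$ whenever $u(z_1)=u(z_2)$, and differentiating it gives $D_{u(z)}\lambda\circ D_zu=\mathrm{id}$, so $D_zu\neq 0$ for every $z$; hence $u$ is a biholomorphism from $\mathbb{D}(0,R)$ onto its open image. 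Let $\mathcal{N}$ be the open neighbourhood of $\overline{\{u(z):z\in\mathbb{D}(0,R')\}}$ on which $g$ is meromorphic. By continuity of $u$ one has $u(\overline{\mathbb{D}}(0,R'))\subseteq\overline{\{u(z):z\in\mathbb{D}(0,R')\}}\subseteq\mathcal{N}$, so $u^{-1}(\mathcal{N})$ is an open subset of $\mathbb{D}(0,R)$ containing the compact set $\overline{\mathbb{D}}(0,R')$; I would then pick $\varepsilon_0>0$ with $R'+\varepsilon_0<R$ and $\overline{\mathbb{D}}(0,R'+\varepsilon_0)\subseteq u^{-1}(\mathcal{N})$, and set $R'':=R'+\varepsilon_0$, so that $g\circ u$ is meromorphic on a neighbourhood of $\overline{\mathbb{D}}(0,R'')$.

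Next I would pin down the poles and the equivariance of $g\circ u$ on $\overline{\mathbb{D}}(0,R'')$. Iterating the relation $u(e^{2i\pi/d}z)=Au(z)$ of assumption $iv)$ gives $u(\zeta_d^kz)=A^ku(z)$ with $\zeta_d:=e^{2i\pi/d}$; taking $z=R'$, the $A$-orbit of $u(R')$ — which by $iii)$ is exactly the pole set of $g$ in $\mathcal{N}$ — equals $\{u(\zeta_d^kR'):k=0,\dots,d-1\}$, a set of $d$ distinct points, none of them equal to $u(0)$ (by injectivity of $u$ and $\zeta_d^kR'\neq 0$). Since $u$ is a local biholomorphism, $g\circ u$ has a pole of degree $M$ at each $\zeta_d^kR'$, and by injectivity of $u$ these are its only poles in $u^{-1}(\mathcal{N})\supseteq\overline{\mathbb{D}}(0,R'')$; in particular $g\circ u$ is holomorphic at $z=0$, so its Taylor series $\sum_nb_nz^n$ is well defined. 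The $d$ points $\zeta_d^kR'$ are precisely the $d$-th roots of $(R')^d$. As for the equivariance, if $z\in\mathbb{D}(0,R')$ then $\zeta_dz\in\mathbb{D}(0,R')$ too, and $(g\circ u)(\zeta_dz)=g(Au(z))=e^{2i\pi r/d}g(u(z))=e^{2i\pi r/d}(g\circ u)(z)$ by $iv)$ and $v)$; since both sides are meromorphic on the connected set $\mathbb{D}(0,R'')$ and agree on $\mathbb{D}(0,R')$, the identity theorem gives the equality throughout $\mathbb{D}(0,R'')$.

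At this point $g\circ u$ satisfies every hypothesis of Theorem \ref{Asymptotic_Expansion_for_merop_func}, with the radius $R$ there replaced by $R''$ and the radius $R'$ unchanged, so applying that theorem to $g\circ u$ yields $b_n=0$ for $n\not\equiv r\,[d]$ together with the announced expansion of $b_{dn+r}$, the constant $D_{-M}$ being nonzero; and the error term $O\bigl((R''+\varepsilon_1)^{-n}\bigr)$ produced there is an $O\bigl((R'+\varepsilon)^{-n}\bigr)$ with $\varepsilon:=\varepsilon_0+\varepsilon_1>0$, as required. I expect the only point needing genuine care to be the bookkeeping of the first two paragraphs — using compactness together with the \emph{global} (not merely local) injectivity of $u$ to see that enlarging $\mathbb{D}(0,R')$ to $\mathbb{D}(0,R'')$ neither creates spurious poles nor destroys the equivariance; after that, the conclusion is an immediate application of Theorem \ref{Asymptotic_Expansion_for_merop_func}.
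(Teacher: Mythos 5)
Your proposal is correct and follows essentially the same route as the paper: use $\lambda\circ u=\mathrm{id}$ to see that $u$ is a biholomorphism onto its image, so that $g\circ u$ is meromorphic near $\overline{\mathbb{D}}(0,R')$ with poles of degree $M$ exactly at the $d$-th roots of $(R')^d$ and with the required $\mathbb{Z}/d\mathbb{Z}$-equivariance, then apply Theorem \ref{Asymptotic_Expansion_for_merop_func}. Your version merely spells out the neighbourhood bookkeeping and the verification of the hypotheses that the paper's two-line proof leaves implicit.
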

\begin{proof}
The function $u:\mathbb{D}(0,R)\to\mathcal{C}$ is a biholomorphism from $\mathbb{D}(0,R)$ onto its range with inverse $\lambda$. Thus the function $z\mapsto g\circ u(z)$ is  meromorphic on a neighbourhood of the compact set $\overline{\mathbb{D}}(0,R')$ with set of poles $\{R'e^{2i\pi k/d}: k\in\{0,...,d-1\}\}$, and these poles have degree $M$.
We then apply the previous theorem \ref{Asymptotic_Expansion_for_merop_func}. %\ref{Asymptotic_Expansion_for_merop_func} to the function $z\mapsto g\circ u(z)$ to obtain the above asymptotic.
\end{proof}

\bibliographystyle{alpha}
\bibliography{sample}

Institut Mathématiques de Bordeaux/ Institut Montpellierain Alexander Grothendieck\\
\indent E-MAIL: gchevalier@protonmail.com
\end{document}